\newcommand{\w}{\widehat}
\renewcommand{\P}{\bb{P}}
\newcommand{\R}{\bb{R}}
\newcommand{\cov}{\mathrm{Cov}} 
\newcommand{\ov}[1]{\overline{#1}}
\newcommand{\bb}[1]{\mathbb{#1}}
\newcommand{\s}[1]{\mathcal{#1}}
\newcommand{\abso}[1]{\left| #1 \right|}
\newcommand{\scalar}[2]{\langle #1, #2 \rangle}
\newcommand{\indic}{\ensuremath{\mathds{1}}}
\newcommand{\ps}[1]{\bb{P}_{#1}-\text{a.s}}
\newcommand{\indep}{\perp \!\!\! \perp}
\newenvironment{changemargin}[2]{%
  \begin{list}{}{%
    \setlength{\topsep}{0pt}%
    \setlength{\leftmargin}{#1}%
    \setlength{\rightmargin}{#2}%
    \setlength{\topmargin}{0pt} 
    \setlength{\headheight}{13pt} 
    \setlength{\headsep}{8pt}
    \setlength{\listparindent}{\parindent}%
    \setlength{\itemindent}{\parindent}%
    \setlength{\parsep}{\parskip}%
    \setlength{\footskip}{50pt} 
    \setlength{\textheight}{600pt} 
  }%
  \item[]}{\end{list}}
\newcommand{\wN}[2]{\widetilde{N}_{#1}^{#2}}
\newcommand{\oN}[2]{\overline{N}_{#1}^{#2}}
\newcommand{\N}[2]{N_{#1}^{#2}}
\newcommand{\lamb}[2]{\lambda_{#1}^{#2}}
\newcommand{\wlamb}[2]{\widetilde{\lambda}_{#1}^{#2}}
\newcommand{\olamb}[2]{\overline{\lambda}_{#1}^{#2}}
\newcommand{\Lamb}[2]{\Lambda_{#1}^{#2}}
\newcommand{\T}[2]{T_{#1}^{#2}}
\newcommand{\wT}[2]{\widetilde{T}_{#1}^{#2}}
\newcommand{\wMLE}{\widetilde{\theta}_T}
\newcommand{\MLE}{\w{\theta}_T}
\newcommand{\thetastar}{\theta^{\star} }
\newcommand{\alphastar}{\alpha^{\star}}
\newcommand{\betastar}{\beta^{\star}}
\newcommand{\mustar}{\mu^{\star}}
\newcommand{\hstar}[1]{h_{\thetastar, #1}}
\newcommand{\bastar}[2]{g_{ {{#1}^{\star}_{#2} }}}
\newcommand{\bas}[2]{g_{ #1_#2 }}
\newcommand{\SpaceState}{\bb{R}^d_+ \times \bb{R}_+^{d} \times \bb{R}^s \times \bb{R}^{d^2} \times \bb{R}^{d^2}}
\newcommand{\indexset}[2]{\in \llbracket #1,\ldots, #2 \rrbracket}
\newcommand{\Hz}{\ensuremath{\mathrm{H_0}}}
\newcommand{\Hu}{\ensuremath{\mathrm{H_1}}}
\newtheorem{theo}{Theorem}
\newtheorem{prop}{Proposition}
\newtheorem{coro}{Corollary}
\newtheorem{lemme}{Lemma}
\newtheorem{rem}[theo]{Remark}
\providecommand{\customgenericname}{}
\newcommand{\newcustomtheorem}[2]{%
  \newenvironment{#1}[1]
  {%
   \renewcommand\customgenericname{#2}%
   \renewcommand\theinnercustomgeneric{##1}%
   \innercustomgeneric
  }
  {\endinnercustomgeneric}
}
\newtheoremstyle{nopoint}%
  {3pt}   
  {3pt}   
  {\itshape}  
  {}      
  {\bfseries} 
  {}      
  {1em}   
  {}      
\theoremstyle{nopoint}
\newtheorem{ass}{Assumption}
\def\limiteloi{\renewcommand{\arraystretch}{0.5}
\begin{array}[t]{c}\stackrel{{\cal L}}{\longrightarrow} \\
{\scriptstyle T\rightarrow+\infty}\end{array}\renewcommand{\arraystretch}{1}}
\renewcommand{\P}{\bb{P}}
\begin{document}

\begin{frontmatter}

\title{Hawkes process with a diffusion-driven baseline: \\long-run behavior, inference, statistical tests}

\author[1]{Maya Sadeler Perrin \corref{cor1}}
\author[2]{Anna Bonnet}
\author[2,3]{Charlotte Dion Blanc}
\author[1]{Adeline Samson}

\cortext[cor1]{Corresponding author \texttt{maya.sadeler@univ-grenoble-alpes.fr}}

\affiliation[1]{
    organization={Univ. Grenoble Alpes, CNRS, Grenoble INP$^1$, LJK, 38000 Grenoble},
    country={France}
}

\affiliation[2]{
    organization={Sorbonne Université, CNRS, Laboratoire de Probabilités, Statistique et Modélisation,
    LPSM, UMR 8001},
    city={Paris},
    postcode={F-75005},
    country={France}
}

\affiliation[3]{
    organization={CMAP, Ecole Polytechnique, IP Paris, Palaiseau},
    country={France}
}

\fntext[label2]{Institute of Engineering Univ. Grenoble Alpes}

\begin{abstract}
Event-driven systems in fields such as neuroscience, social networks, and finance often exhibit dynamics influenced by continuously evolving external covariates. Motivated by these applications, we introduce a new class of multivariate Hawkes processes, in which the spontaneous rate of events is modulated by a diffusion process. This framework allows the point process to adapt dynamically to continuously evolving covariates, capturing both intrinsic self-excitation and external influences. In this article, we establish the probabilistic properties of the coupled process, proving stability and ergodicity under moderate assumptions. Classical functional results, including law of large numbers and mixing properties, are extended to this diffusion-driven setting. Building on these results, we study parametric inference for the Hawkes component: we derive consistency and asymptotic normality of the maximum likelihood estimator in the long-time regime, and derive stronger convergence results under additional assumptions on the covariate process. We further propose hypothesis testing procedures to assess the statistical relevance of the covariate. Simulation studies illustrate the validity of the asymptotic results and the effectiveness of the proposed inference methods. Overall, this work provides theoretical and practical foundations for diffusion-driven Hawkes models.
\end{abstract}

\begin{keyword}
Hawkes process with covariate \sep Stochastic baseline \sep Diffusion-driven intensity \sep Ergodicity \sep Mixing \sep Functional theorems \sep Maximum Likelihood Estimation \sep Statistical testing
\end{keyword}

\end{frontmatter}

\section{Introduction}
Many systems exhibit event-driven dynamics, where the intensity of occurrences evolves through self-excitation and external time-varying covariates. For instance, in biology, mutation events in a cell population may depend on past mutations and stochastic environmental factors, while in social networks, the propagation of information depends on prior activity and evolving contextual signals. In these frameworks, the probability of new events occurring is a functional of both the system’s history and continuously evolving stochastic covariates.

To describe this type of dependence, we consider a Hawkes process with stochastic baseline intensity, in which the spontaneous rate of activity is driven by a diffusion process. The covariate process $X(t)\in\bb{R}^m$ is modeled as the solution of the stochastic differential equation (SDE)
\begin{equation}
\label{eq:SDE_multidim}
    dX(t) = b(X(t))\,dt + \sigma(X(t))\,dW(t), 
    \qquad X(0)\sim \pi_0,
\end{equation}
where $W(t)$ is a standard $l$-dimensional Brownian motion with $l\le m$, $b: \bb{R}^m \to \bb{R}^m$  is the drift function, $\sigma: \bb{R}^m \to \bb{R}^{m \times l}$ is the diffusion coefficient, and $X(0)$ is distributed according to $\pi_0$. The point process $N = (N_1,\dots,N_d)$ describing event arrival has an intensity $\lamb{}{} = (\lamb{1}{},\dots,\lamb{d}{})$, which takes the form of 
\begin{equation}
\label{eq:modelLambda_general}
\lamb{i}{}(t):= g_i(X(t^-)) + \sum_{j=1}^d \int_{(0,t^-)} h_{ij}(t-s) d\N{j}{}(s), \quad i=1,\dots,d,
\end{equation}
where $g_i:\bb{R}^m\to\bb{R}_+$ is a state-dependent baseline intensity. The kernel $h_{ij}$ quantifies the influence of past events of component $j$ on the current intensity of component $i$ and characterizes how the occurrence of an event increases the probability of future events, and how this excitation decays over time.

The coupled system $(X(t),N(t))$ thus forms a diffusion–point process pair, in which the diffusion $X(t)$ modulates the spontaneous emission event rate of $N(t)$. The application that motivates this work is from neuroscience, where $N(t)$ represents the spike train of a neural network, and $X(t)$ encodes external or latent inputs such as sensory stimuli, the animal's spatial position, or environmental conditions like light intensity or temperature. By explicitly incorporating $X(t)$ into the intensity function, the model captures how neuronal firing is dynamically modulated in real time by continuously evolving external signals. Such a representation allows to quantify, for instance, how a neuron's behavior changes in response to variations in sensory input or behavioral state, and to untangle the contributions of intrinsic self-excitation from stimulus-driven effects. In this work, we focus on the case where the excitation kernel takes an exponential form, that is

\begin{equation}
\label{eq:exp_kernel}
    h_{ij}(t) = \alpha_{ij} e^{-\beta_{ij} t}, 
    \qquad \alpha_{ij}, \beta_{ij} > 0.
\end{equation}
This specification is common as it allows for a Markovian representation of the process intensity \cite{clinet2017statistical,laub2021elements}. It is also justified by the application considered in neuroscience as many studies focusing on neuronal activity (studies not necessarily based on Hawkes processes) rely on an exponential decay of the impact of past spikes on the current neuronal activity \cite{bonnet2022neuronal, reynaud2013spike, chevallier2019mean}.

The goal of this paper is to study this new diffusion-driven Hawkes process, both from a probabilistic and inferential points of view.

\subsection*{State of the Art}

Introduced by \cite{hawkes1971point} in 1971, linear Hawkes processes are reference models for self-exciting point phenomena. In their classical formulation, each component $N_i$ of the process has a predictable intensity depending on a constant baseline $\mu_i>0$ (i.e., $\bas{\mu}{i}(x) = \mu_i$ in Equation~\eqref{eq:modelLambda_general}) and a kernel $h_{ij}$ describing the influence of past events. For a general kernel $h_{ij}:\bb{R}_+\to\bb{R}_+$, existence, uniqueness, stability, and stationarity of the multivariate Hawkes process have been established \citep{hawkes1971point, daley2003introduction, bremaud1996stability}, and functional limit theorems have been derived for the asymptotic distribution of event counts \citep{zhu2013central}. Many works deal with inference for these processes. In general, approaches can be divided into nonparametric methods, such as kernel smoothing, projection, or moment-based techniques \citep{reynaud2013spike, bacry2015hawkes}, and parametric methods, including Bayesian inference \citep{sulem2024bayesian}, Maximum Likelihood Estimation \cite{ogata1978estimators,clinet2017statistical} and penalized estimation such as Lasso for high-dimensional or sparse interaction structures \citep{dion2024erm, kwan2024likelihood}.

A simplification occurs for exponential kernels, where the intensity $\lamb{}{}(t)$ admits a Markovian representation \citep{clinet2017statistical}. This structure facilitates probabilistic analysis and recursive likelihood computation, leading to stronger results both on the process’s probabilistic behavior—such as exponential ergodicity with explicit control of regeneration times \citep{cattiaux2022limit}—and on statistical inference, notably with central limit theorems for the maximum likelihood estimator, and moment convergence \citep{ogata1978estimators, clinet2017statistical}.

However, the models studied in those paper remain autonomous: their dynamics depend solely on past events. To account for external influences, several generalizations have been proposed. Marked Hawkes processes include event-specific covariates or marks \citep{bonnet2024testing, zhuang2002stochastic, clinet2021asymptotic, chiang2022hawkes}, although the covariates are typically observed only at event times \citep{davis2024fractional}. Spatio-temporal extensions have also been considered \citep{bernabeu2025spatio}, but do not handle continuously evolving covariates.  

Other models build on a framework similar to that defined by Equations \eqref{eq:SDE_multidim}--\eqref{eq:modelLambda_general}, allowing for time-varying coefficients in the intensity. These approaches, however, have been explored from a rather different perspective, focusing primarily on the high-frequency regime, where the observation window $T$ is fixed and the number of events $N$ goes to infinity \citep{clinet2017statistical, potiron2025high, kwan2024likelihood, cai2024latent}. This includes univariate exponential Hawkes processes with stochastic coefficients \citep{clinet2017statistical, potiron2025high}, as well as non-exponential kernels with deterministic, time-dependent baselines \citep{kwan2024likelihood}. Although these works provide consistent and asymptotically normal estimators, they do not investigate other probabilistic properties of the model—such as stability, ergodicity, or mixing behavior—which are central to the present study.

The low-frequency (long-duration) regime has received less attention, with some contributions addressing nonparametric inference for indicator-type kernels \citep{cai2024latent}. Recent works integrate covariates directly into the Hawkes process, but typically no explicit model is specified for the covariate, and the effect of its dynamics is not analyzed. Moreover, covariates are often not continuous in time or treated as deterministic. For instance, \cite{li2024multivariate} models the baseline intensity as a constant and expressed as a log-linear function of covariate matrix $X$ and a parameter to estimate. A similar model is used in \cite{mohler2018improving}, which focuses on parametric estimation with penalization to improve inference and stability of the estimated parameters.
In \cite{boumezoued2025calibration}, the authors consider marked Hawkes processes for cyber-attack frequency modeling, where part of the baseline intensity is driven by unobserved homogeneous Poisson components. However, these models rely on latent baselines, which differs from our framework where the external covariate $X(t)$ is continuous, fully observed, and modulates the intensity of the Hawkes process.

Given the variety of existing Hawkes models and the additional complexity introduced by the covariate in the baseline, we are interested in testing the relevance of the covariate. However, even for the standard Hawkes process, existing statistical testing procedures are scarce. Classical goodness-of-fit methods, such as those based on the time-change theorem \citep{papangelou1972integrability}, require repeated events \citep{reynaud2014goodness} and have no straightforward extension to the long-duration regime. The only theoretically grounded procedure in this setting is the test proposed by \cite{baars2025asymptotically}, which accounts for the asymptotic fluctuations of the estimated compensator. In practice, however, implementing this test is complex and computationally demanding, so practical tools for assessing covariate relevance in such models remain limited.

\subsubsection*{Contribution and paper organization}

Our main contributions are as follows:
\begin{itemize}
    \item We establish the probabilistic properties of the process $(X(t),N(t))$, showing that under moderate assumptions, it admits stable and ergodic dynamics. We also extend classical functional results, as the law of large numbers of the functional limit theorem, for the point process $N$.
    \item We extend to our framework the classical mixing theorem known for standard Hawkes processes. Beyond a direct extension, we propose several results depending on the strength of the assumptions made on the covariate process and on how it impacts the intensity.
    \item We study statistical inference for the model parameters in the long-time (low-frequency) regime, proving consistency and central limit theorems for the Maximum Likelihood Estimator (MLE) under general ergodicity conditions on $X(t)$. As for the mixing theorem, we develop a complementary version of this result: when stronger conditions on $X$ hold, we obtain a stronger version with convergence of moments for the estimator. 
    \item We develop hypothesis testing procedures to assess the relevance of the covariate, enabling formal evaluation of whether including $X(t)$ significantly enhances data description.
\end{itemize}

The remainder of the paper is organized as follows. In Section \ref{sec:definition}, we introduce the Hawkes process with stochastic baseline intensity, define the coupled process $(X(t),N(t))$, and establish basic existence results. Section \ref{section:theorie} presents the theoretical properties, including ergodicity, functional limit theorems, and mixing results under different assumptions on $X(t)$. Section \ref{section:MLE} focuses on statistical inference, studying the MLE in the long-time regime with consistency and CLTs results. Section \ref{section:test} introduces hypothesis testing for the relevance of the covariate. Finally, Section \ref{section:simu} illustrates our theoretical conclusions using simulations, presenting the behavior of the process, the performance of the MLE, and the testing procedures.

\section{Hawkes process with stochastic time-dependent baseline}
\label{sec:definition}
Before introducing the model, let us first define some notation.

\subsection{Notation}  
\label{subsection:notation}

Throughout the paper, we work on a probability space $(\Omega, \s{F}, \mathbb{P})$ with canonical filtration
\[
\s{F}_t:= \sigma\Big(X(0), W(s), N(s), s \le t\Big),
\]
where $X$ is the covariate process, driven by a noise process $W$, and $N = (N_1, \dots, N_d)$ is the $d$-dimensional counting process. For any stochastic process $Z$, $\s{F}^Z$ denotes its canonical filtration, for instance $\s{F}^W$ or $\s{F}^N$. Let $(T_n)_{n \ge 1}$ denote the jump times of $N$, with $T_0:= 0$. The true Hawkes intensity of component $N_i$ at time $t$ is denoted by $\lamb{i}{\star}(t)$, and the full intensity vector is $\lamb{}{\star}(t):= (\lamb{1}{\star}(t), \dots, \lamb{d}{\star}(t))^\top \in \bb{R}_+^d$. The associated compensator
\[
\Lamb{}{\star}(T):= \int_0^T \lamb{}{\star}(t)\, dt
\]
is the unique predictable increasing process such that $N(T) - \Lamb{}{\star}(T)$ is a martingale with respect to $(\s{F}_t)_{t \ge 0}$, see \emph{e.g.} \cite{daley2003introduction, daley2008introduction}.

For a differentiable function $f: \bb{R}^n \times \bb{R}^p \to \bb{R}$ depending on a state variable $x$ and a parameter $\eta$, we denote by $\nabla_x f_\eta(x)$ the gradient with respect to $x$, and by $\nabla_\eta f_\eta(x)$ the gradient with respect to $\eta$. Higher-order derivatives with respect to $\eta$ are denoted $\nabla_\eta^j f_\eta(x)$. Finally, we denote $\mathcal{D}(E, \bb{R})$ the set of functions $\psi \in \mathcal{C}^1(E, \bb{R})$ such that $(u,v,w) \mapsto \nabla \psi(u,v,w)$ has polynomial growth, and $\mathcal{C}_b(E, \bb{R})$ the set of continuous bounded functions on $E$.

Finally, we denote by $\norm{ . }$ the Euclidean norm.

\subsection{Model description}

We consider a model in which a multivariate Hawkes process is influenced by an external covariate that evolves continuously over time. The joint process $(X(t), N(t))$ combines a diffusion for the covariate with a self-exciting point process whose dynamics depend both on the current state of the covariate and on the history of past events. The covariate $X(t) \in \bb{R}^m$ is modeled as the solution of a SDE driven by a standard $l$-dimensional Brownian motion given in Equation~\eqref{eq:SDE_multidim}. To ensure that this diffusion component is well-defined, we impose the following regularity condition on its drift and diffusion coefficients.

\begin{ass} \label{ass:existtence_SDE}
The coefficient functions $b:\R^m\rightarrow \R^m$ and $\sigma:\R^{m}\rightarrow \R^{m\times l}$ are continuous and globally Lipschitz.
\end{ass}
Under Assumption~\ref{ass:existtence_SDE}, the SDE defined in Equation~\eqref{eq:SDE_multidim} admits a unique global strong solution. Conditionally on $X(t)$, the multivariate Hawkes process $N$ is then defined through its intensity for each component $i$:
\begin{equation}
\label{eq:modelLambda}
\lamb{\theta,i}{}(t):=  \bas{\mu}{i}(X(t^-)) + \sum_{j=1}^d \int_{(0,t^-)} \alpha_{ij} e^{-\beta_{ij}(t-s)}\, d\N{j}{}(s) .
\end{equation}

We denote the main parameter $\theta \in \Theta$, composed of three types of parameters $\mu$, $\alpha$, and $\beta$, where $\mu$ parametrises the baseline functions while $\alpha$ and $\beta$ parametrise the interaction functions:
\[
\theta:= \big( (\mu_i)_{i=1}^d, (\alpha_{ij})_{i,j=1}^d, (\beta_{ij})_{i,j=1}^d \big).
\]
Furthermore, the parameter space $\Theta$ is assumed to be a Cartesian product
$
\Theta:= \Theta_1 \times \Theta_2,
$
with $\Theta_1$ a compact representing all admissible values for $\mu = (\mu_i)_{i=1}^d$, and $(\alpha, \beta)=((\alpha_{ij})_{i,j}, (\beta_{ij})_{i,j}) \in \Theta_2$ with 
\[
\Theta_2:= [\underline{\alpha}, \ov{\alpha}]^{d^2} \times [\underline{\beta}, \overline{\beta}]^{d^2}
\]
for some positive constants $\underline{\alpha}, \overline{\alpha}, \underline{\beta}, \ov{\beta}$.
Finally, 
we assume the existence of a true parameter $\thetastar \in \Theta$ such that
\[
\lamb{}{\star}(t):= \lamb{\thetastar}{}(t)
\]
denotes the true intensity vector, with associated compensator denoted $\Lamb{\thetastar}{}(\cdot)$.

For the Hawkes component, we further require a stability condition ensuring that self and cross-exciting effects remain bounded.
\begin{ass} \label{ass:sta_hawkes}
The kernel matrix $K:= (\alphastar_{ij}/\betastar_{ij})$ satisfies $\rho(K)<1$, where $\rho(K)$ denotes its spectral radius.
\end{ass}
Assumptions~\ref{ass:existtence_SDE} and~\ref{ass:sta_hawkes} guarantee that the joint process $(X(t), N(t))$ is well-posed, ensuring existence and uniqueness of $X(t)$ and stability/non-explosion of $N(t)$. These conditions are always assumed throughout this work.

\subsection{Existence result}

In order to prove the existence of the process, let us exhibit a thinning representation that applies to the Hawkes component of the process defined by Equations~\eqref{eq:SDE_multidim}–\eqref{eq:modelLambda}.

\begin{ass}[Boundedness of the random baseline]
\label{ass:bounded_baseline1}
 It is assumed that there exists $g_{+}>0$ such that one of the following two statements is true. 
\begin{itemize}
    \item Either, for all $x \in  \bb{R}^m$, $0 < \max_{i \in \llbracket 1 \ldots d\rrbracket } \bastar{\mu}{i}(x) < \bas{\mustar}{+}$.
    \item Or, there exist $x_1,x_2$ such that for any initial condition $x \in [x_1, x_2]$, $X(t) \in [x_1, x_2] \quad \ps{x}$ and $\bastar{\mu}{i} ([x_1, x_2]) \subset (0, g_+]$.
    \end{itemize}
\end{ass}

Each condition of Assumption~\ref{ass:bounded_baseline1} implies the existence of an upper bound for the process $\bastar{\mu}{i}(X(t))$ which ensures the existence of a thinning procedure.

\begin{theo}[Existence]
\label{theo:existence}
Suppose that Assumption~\ref{ass:bounded_baseline1} holds. 
Then, the process $(\lamb{\thetastar}{}(t))$ admits a Thinning procedure representation given by
\begin{align*}
& dX(t) = b(X(t))\,dt + \sigma(X(t))\,dW(t), \\
& \lamb{i}{(n+1)}(t) = \bastar{\mu}{i}(X(t^-)) + \sum_{j=1}^d \int_{(0,t)} \hstar{ij}(t-s)\, d\N{j}{(n)}(s), \\
& d\N{j}{(n+1)}(t) = \N{j}{P} \Big( [0,\lamb{j}{(n+1)}(t)] \times dt \Big),
\end{align*}
with $\lamb{}{(0)} = 0$, $\N{}{(0)} = \emptyset$, \(\N{j}{P}\) a homogeneous unitary Poisson random measure on \(\bb{R}_+^2\) with intensity measure equal to the Lebesgue measure and where $\N{j}{P}\qty( [0,\lamb{j}{(n+1)}(t)] \times dt )$ denotes the number of points of $\N{j}{P}$ in $[0,\lamb{j}{(n+1)}(t)] \times [t,t+dt]$.
\end{theo}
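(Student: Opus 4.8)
The plan is to exploit the fact that the diffusion $X$ is autonomous: under Assumption~\ref{ass:existtence_SDE} the SDE \eqref{eq:SDE_multidim} has a unique strong solution, so I would first fix a realization of the path $(X(t))_{t\ge 0}$ and then build the counting process $N$ conditionally on it. With $X$ frozen, the intensity in \eqref{eq:modelLambda} becomes a functional of the past events plus a bounded, time-dependent baseline $t \mapsto \bastar{\mu}{i}(X(t^-))$, so the problem reduces to a Lewis--Shedler--Ogata thinning construction for a linear Hawkes process with time-varying baseline. The two structural features I would lean on throughout are that the exponential kernels $\hstar{ij}(t) = \alphastar_{ij} e^{-\betastar_{ij} t}$ are \emph{nonnegative} and that all iterates are driven by the \emph{same} Poisson random measures $\N{j}{P}$.

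Second, I would establish monotonicity of the fixed-point iteration by induction on $n$. Starting from $\N{}{(0)} = \emptyset$ and $\lamb{}{(0)} = 0$, the base case $\N{}{(0)} \subseteq \N{}{(1)}$ is immediate. Assuming $\N{}{(n-1)} \subseteq \N{}{(n)}$ as point configurations, nonnegativity of the kernels gives $\lamb{i}{(n)}(t) \le \lamb{i}{(n+1)}(t)$ for all $t$ and $i$; since both levels thin the same measures $\N{j}{P}$, every point accepted at level $n$ is also accepted at level $n+1$, whence $\N{}{(n)} \subseteq \N{}{(n+1)}$. The sequence is therefore increasing, and I would define the candidate process as the pointwise (set) limit $\N{}{(\infty)} := \bigcup_n \N{}{(n)}$, with associated intensity $\lamb{}{(\infty)}$.

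Third, and this is where Assumption~\ref{ass:bounded_baseline1} enters decisively, I would prove non-explosion, i.e. that $\N{}{(\infty)}$ has only finitely many points on every bounded interval almost surely. In either case of Assumption~\ref{ass:bounded_baseline1} one has the uniform bound $\bastar{\mu}{i}(X(t^-)) \le g_+$ for all $t$ almost surely (in the second case because $X$ remains confined to $[x_1,x_2]$). Using the same Poisson measures, I would then compare the iterates with the thinning iterates of a standard linear Hawkes process $\ov{N}$ having constant baseline $g_+$ and the same kernels $\hstar{ij}$. The domination $\N{}{(n)} \subseteq \ov{N}$ holds at every level by the same monotone thinning argument, and the dominating process is a classical multivariate Hawkes process which, under the stability Assumption~\ref{ass:sta_hawkes} ($\rho(K)<1$), is non-explosive with locally finite mean measure. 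Hence $\N{}{(\infty)}$ is sandwiched between the increasing iterates and a non-explosive process, giving local finiteness.

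Finally, I would pass to the limit in the recursion. By monotone convergence the integrals $\int_{(0,t)} \hstar{ij}(t-s)\, d\N{j}{(n)}(s)$ converge to $\int_{(0,t)} \hstar{ij}(t-s)\, d\N{j}{(\infty)}(s)$, so $\lamb{i}{(\infty)}$ satisfies the fixed-point identity \eqref{eq:modelLambda} with $\theta = \thetastar$, and the thinning relation $d\N{j}{(\infty)}(t) = \N{j}{P}([0,\lamb{j}{(\infty)}(t)] \times dt)$ is inherited in the limit, identifying $\N{}{(\infty)}$ as a point process with the prescribed intensity $\lamb{\thetastar}{}$. The main obstacle is precisely the non-explosion step: the monotone construction always produces \emph{some} limiting configuration, but without the uniform baseline bound of Assumption~\ref{ass:bounded_baseline1} the limit could accumulate infinitely many points in finite time; the domination by a constant-baseline Hawkes process, combined with stability, is exactly what rules this out and renders the limiting intensity finite and the representation meaningful.
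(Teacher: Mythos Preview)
Your proposal is correct and follows essentially the same approach as the paper: condition on the Brownian filtration so that $X$ becomes a fixed path, reducing the problem to a Hawkes process with a bounded deterministic time-varying baseline, and then construct the process via thinning. The paper simply invokes Theorem~1 of \cite{cai2024latent} for this second step, whereas you spell out the monotone thinning iteration and the domination by a constant-baseline Hawkes process explicitly; both arguments rely on Assumption~\ref{ass:bounded_baseline1} for the uniform bound and (implicitly, through the standing Assumption~\ref{ass:sta_hawkes}) on stability for non-explosion.
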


\begin{proof}
    Assumption \ref{ass:existtence_SDE} ensures the existence of a unique strong solution on $[0,T]$ for the SDE, for any $T>0$. We can therefore work conditionally to the Brownian filtration. We then fall back to the case of a Hawkes process with a deterministic baseline function which is bounded, thanks to Assumption \ref{ass:bounded_baseline1}, and thus apply Theorem 1 of \cite{cai2024latent} which studies a similar set-up. 
\end{proof}


\section{Asymptotic behavior of the process}
\label{section:theorie}

In this section, we study the asymptotic properties of the point process 
$N(t)$ and its intensity $\lamb{\thetastar}{}(t)$. We focus on three aspects: ergodicity, limit theorems (law of large numbers and central limit theorem), and mixing properties. These results characterize the long-term behavior of the process and extend classical results obtained for Hawkes processes with constant baselines \cite{ogata1978estimators, bremaud1996stability, clinet2017statistical, bacry2013some}.

\subsection{Exponential ergodicity of the intensity process}

To establish the ergodicity of the intensity, we study the joint dynamics of the covariate $X$ and the excitation component of the Hawkes process. Specifically, we introduce a Markovian representation of the system to use classical results from the ergodic theory of Markov processes.

\subsubsection{Markovian representation}

We consider the joint process $Z(t):= (X(t), Y(t)) \in \bb{R}^m \times \bb{R}^{d^2}$ with $Y$ defined by 
\begin{equation}
\label{eq:kernelY}
    Y_{ij}(t):= \int_{(0,t)} \alphastar_{ij} e^{-\betastar_{ij}(t - s)} \, d\N{j}{}(s), 
    \quad Y(t) = \qty(Y_{ij}(t))_{1 \leq i, j \leq d},
\end{equation}
so that
\[
\lamb{\thetastar,i}{}(t) = \bastar{\mu}{i}(X(t^{-})) + \sum_{j=1}^d Y_{ij}(t).
\]
Our goal is to analyze the asymptotic behavior of $Z$ by applying the ergodic theory for Markov processes developed in \cite{meyn1993stability}. The dynamics of $Z$ is given by the coupled system of equations:
\begin{equation} \label{eq:markov_eq}
\begin{cases}
dX(t) = b(X(t)) \, dt + \sigma(X(t)) \, dW(t), \\
dY_{ij}(t) = -\betastar_{ij} Y_{ij}(t) \, dt + \alphastar_{ij} \, d\N{j}{}(t).
\end{cases}
\end{equation}
Equation \eqref{eq:markov_eq} shows that the joint process $Z$ is Markovian w.r.t the filtration $(\s{F}_t)$ defined in Section~\ref{subsection:notation}.

\subsubsection{Ergodicity property}

To ensure the ergodicity of the diffusion-driven baseline process,
we impose structural assumptions on the diffusion \(X(t)\).
We begin with classical conditions, which guarantee that
the SDE admits smooth transition densities and suitable Lyapunov functions.

\begin{ass} Consider the following conditions.
\label{ass:ergodicitySDE}
\begin{itemize}
\item \textbf{Hörmander's condition.} \label{ass:hormander}
Let $\sigma_1,\ldots,\sigma_m$ denote the columns of the diffusion matrix $\sigma$ and set $\sigma_0:= b$ for the drift vector field.  
We say that the Hörmander condition holds if the Lie Algebra generated by $\{ \sigma_0, \ldots ,\sigma_m \} $ span $\bb{R}^m$

\item \textbf{Dissipative drift and elliptic diffusion.} Suppose that the function $b$ satisfies \[
\langle b(x), x \rangle \leq -\gamma \|x\|^q, \quad \text{for all } \|x\| \geq \s{R},
\]
for some constants \(\mathcal{R} > 0\), \(\gamma > 0\), and \(q \geq 1\), and the diffusion matrix
\[
a(x):= \sigma(x)\sigma(x)^\top
\]
is either semi positive definite or bounded on \(\bb{R}^m\).
\end{itemize}
\end{ass}

Assumption~\ref{ass:ergodicitySDE} has two classical consequences for the diffusion \(X\).
First, Hörmander's condition ensures that the
transition kernel \(P_t^X(x,dy)\) admits a smooth density \(p_t(x,y)\) which is $\s{C}^\infty$ in $t$, $x$ and $y$,
with respect to the Lebesgue measure (see \cite{rogers2000diffusions}). 
Second, the dissipativity and ellipticity assumptions guarantee the existence of norm-like Lyapunov functions.  In particular, it is possible to construct a function of the form
\begin{equation}
\label{eq:lyapunov_function}
V_1(x)=\exp\!\big(\xi\|x\|^q\big),
\end{equation}
with \(\xi>0\), which satisfies the drift condition
\begin{equation}
    \label{eq:lyap_condition}
    \s{A}^X V_1 \le -c\,V_1 + d_1 \indic_{K_1}
\end{equation}
for suitable constants \(c_1,d_1>0\), a compact set \(K_1\subset\bb{R}^m\) and where $\s{A}^X$ denotes the infinitesimal generator of $X$.

These two properties — smooth transition densities and the existence of a Lyapunov function — are the usual conditions required to apply the ergodic theory of \cite{meyn1993stability} to SDE as they assure the diffusion \(X\) is ergodic and therefore admits a unique invariant probability measure, denoted \(\pi_X\).

In particular, these assumptions are relatively easy to verify and have already been established for a variety of SDE models \cite{mattingly2002ergodicity,gobet2002lan}. Nevertheless, one could work under more general Markovian stability conditions, relying for instance on the broader framework of \cite{meyn1993stability}. For completeness, we state in the following remark a set of more general conditions under which our results remain valid.

\begin{rem}[General Markovian conditions for \(X\)]
\label{rem:general_ergodicity_condition}
The general framework of \cite{meyn1993stability} only requires $X(t)$ is a left-continuous Markov process satisfying the following properties.
\begin{enumerate}
\item \textbf{Lyapunov drift condition.}
There exists a norm-like Lyapunov function \(V_1\) such that
\[
\s{A}^X V_1 \le -c_1\,V_1 + d_1 \mathbf{1}_{K_1},
\]
where $\s{A}^X$ denotes the infinitesimal generator, \(c_1,d_1>0\) are constant and $K_1$ is a compact set of \(\bb{R}^m\).

\item \textbf{\(T\)-chain condition.}
There exists a transition kernel $Q_{1}$ such that for all $x\in\bb{R}^m$, for all Borel set $A$,
\[
P_T^X(x,A)\ge Q_{1,T}(x,A)
\]
with \(Q_{1,T}\) verifying 
\begin{itemize}
    \item for every Borel set \(A\), the map \(x\mapsto Q_{1,T}(x,A)\) is lower semi-continuous
    \item \(Q_{1,T}(x,\bb{R}^m)>0\) for all \(x\).
\end{itemize}
\item \textbf{Accessible point.}
There exists \(x_0\in\bb{R}^m\) such that every neighborhood \(\s{U}\) of \(x_0\) satisfies
\[
P_T^X(x_0,\s{U})>0
\quad\text{for some }T>0 .
\]

\end{enumerate}

It should be noted that these general conditions are automatically satisfied under Assumption~\ref{ass:ergodicitySDE}. Indeed, existence of a density for $P^X_t( x, \, \cdot  \,)$ leads to
\[
Q_{1,t}(x,A)
= \int_A p_t(x,y) dy
\]
which is lower semi-continuous by Fatou's lemma. Furthermore it ensures that every point is accessible. Similarly, the dissipativity and ellipticity conditions guarantees the existence of a Lyapunov function of the form
\( V_1(x)=\exp(\xi\|x\|^q) \) satisfying the required drift inequality. 

Throughout the paper, our main results are stated under Assumption~\ref{ass:ergodicitySDE} for clarity and simplicity. However, each of these results remains valid if one replaces Assumption~\ref{ass:ergodicitySDE} with the more general Markovian conditions listed in this remark.
\end{rem}

In addition to the ergodicity condition on $X$ discussed above, we also need to strengthen Assumption~\ref{ass:bounded_baseline1} to ensure the ergodicity of the process $\lamb{}{}$. We now require that the functions $\bas{\mu}{i}$ are not only upper bounded but also bounded away from zero. 

\begin{ass}[Boundedness of the random baseline] \label{ass:bounded_baseline2}
We assume that one of the following two conditions holds:
\begin{itemize}
    \item Either, for every $\mu\in \Theta_1$, there exist constants $\bas{\mu}{-}$ and $\bas{\mu}{+}$ such that, for all $x \in \bb{R}^m$,
    \[
    0 < \bas{\mu}{-} \leq \max_{1 \leq i \leq d} \bas{\mu}{i}(x) < \bas{\mu}{+}.
    \]

    \item Or, for every $\mu\in \Theta_1$, there exist constants $\bas{\mu}{-}$ and $\bas{\mu}{+}$ and vectors $x_1, x_2 \in \bb{R}^m$ such that, for any initial condition $x \in [x_1, x_2]$, the process remains almost surely within this interval: $X(t) \in [x_1, x_2]$ $\mathbb{P}_x$-a.s., and for all $i \in \llbracket 1, d \rrbracket$ and all $\mu \in \Theta_1$, we have
    \[
    0 < \bas{\mu}{-} \leq \bas{\mu}{i}(x) \leq \bas{\mu}{+}, \quad \text{for all } x \in [x_1, x_2].
    \]
\end{itemize}
\end{ass}

We now establish the exponential ergodicity of the process \(Z\).
Let us define the transition kernel $P_t^{Z}((x,y),\cdot)$:
\[
P_t^{Z}((x,y),(A,B)):= \bb{P}\qty(X(t) \in A,\, Y(t) \in B \,\middle|\, X(0) = x,\, Y(0) = y).
\]
\begin{theo}
\label{theo:exp_ergodicity}
Under Assumptions \ref{ass:sta_hawkes}--\ref{ass:ergodicitySDE}-\ref{ass:bounded_baseline2}, the process $Z$ is exponentially ergodic. In particular, there exist an invariant measure $\pi$ and constants $B < +\infty$ and $\rho < 1$ such that for all $x \in \bb{R}^m$ and all $y >0$,
\begin{equation}
    \label{def:exp_ergodicity}
    \norm{P_t^{Z}((x,y),\cdot) - \pi(\cdot)}_{f} \leq B f(x,y) \rho^t,
\end{equation}
where $\norm{\mu}_f:= \sup_{\abs{g} \leq f} \left|\int g(s) \,\mu(ds)\right|$ denotes the $f$-norm (a weighted extension of the total variation norm), and function $f: (x,y) \in \bb{R}^m \times \bb{R}^{d^2} \mapsto V_1(x) + e^{\langle m, y \rangle}$, with $V_1$ defined in Assumption \ref{ass:ergodicitySDE}. 
\end{theo}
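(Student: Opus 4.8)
The plan is to apply the Foster--Lyapunov theory for geometric ergodicity of continuous-time Markov processes developed in \cite{meyn1993stability}. Since $Z=(X,Y)$ is Markov with respect to $(\s{F}_t)$ by Equation~\eqref{eq:markov_eq}, it suffices to establish two ingredients: a geometric drift condition $\s{A}^Z f \le -c\,f + d\,\indic_C$ for the candidate $f(x,y)=V_1(x)+e^{\langle m,y\rangle}$, a compact set $C$ and constants $c,d>0$; and the petiteness of all compact sets, which together with $\psi$-irreducibility and aperiodicity promotes the drift inequality to the $f$-geometric rate~\eqref{def:exp_ergodicity}. Recall that the generator of $Z$ acts on a smooth test function $\phi$ as
\[
\s{A}^Z\phi(x,y) = \s{A}^X\phi(x,y) - \sum_{i,j}\betastar_{ij}\,y_{ij}\,\partial_{y_{ij}}\phi(x,y) + \sum_{j=1}^d \lamb{\thetastar,j}{}(x,y)\,\big[\phi(x,y+\alphastar_{\cdot j}) - \phi(x,y)\big],
\]
where $\alphastar_{\cdot j}$ is the increment adding $\alphastar_{ij}$ to coordinate $(i,j)$ for each $i$, and $\lamb{\thetastar,j}{}(x,y)=\bastar{\mu}{j}(x)+\sum_k y_{jk}$.

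First I would check the drift condition. As $V_1$ depends only on $x$, both the $y$-drift and jump terms annihilate it, so $\s{A}^Z V_1 = \s{A}^X V_1 \le -c_1 V_1 + d_1\indic_{K_1}$ by Assumption~\ref{ass:ergodicitySDE}. A direct computation on the second term gives
\[
\s{A}^Z e^{\langle m,y\rangle} = e^{\langle m,y\rangle}\Big[\sum_{i,j} y_{ij}\big(\psi_i(m) - m_{ij}\betastar_{ij}\big) + \sum_{j}\bastar{\mu}{j}(x)\,\psi_j(m)\Big], \qquad \psi_j(m):=e^{\sum_i m_{ij}\alphastar_{ij}}-1 .
\]
The baseline contribution is bounded by $g_+\sum_j\psi_j(m)$ thanks to Assumption~\ref{ass:bounded_baseline2}, so everything hinges on making $\psi_i(m)-m_{ij}\betastar_{ij}<0$ for all $i,j$. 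I would set $m_{ij}=\varepsilon\,u_i/\betastar_{ij}$ with $u=(I-K^\top)^{-1}\mathbf{1}>0$ ($\mathbf{1}$ the all-ones vector), which exists and satisfies $K^\top u<u$ precisely because $\rho(K)=\rho(K^\top)<1$ (Assumption~\ref{ass:sta_hawkes}, via the convergent Neumann series / Perron--Frobenius). Then $\psi_i(m)=e^{\varepsilon(K^\top u)_i}-1<\varepsilon u_i=m_{ij}\betastar_{ij}$ for every $i,j$ once $\varepsilon$ is small enough, since $(K^\top u)_i<u_i$. Consequently the bracket is at most $-\delta\norm{y}_1+g_+\sum_j\psi_j(m)$ with $\delta>0$, which is below any fixed negative level outside a compact $y$-region, yielding $\s{A}^Z e^{\langle m,y\rangle}\le -c_2 e^{\langle m,y\rangle}+d_2\indic_{K_2}$. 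Summing the two bounds, using $c=\min(c_1,c_2)$ and enlarging the exceptional set into a compact product $C$ (legitimate because $f$ is norm-like on $\bb{R}^m\times\bb{R}^{d^2}_+$), gives $\s{A}^Z f\le -c f + d\indic_C$.

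Next I would establish irreducibility and petiteness. I would minorize $P_T^Z$ on the event that $N$ has no jump on $[0,T]$, which has strictly positive probability since the intensity is integrable along such paths (baseline bounded by $g_+$ and $Y$ decaying). On this event $Y$ evolves deterministically, $Y(t)=\mathrm{diag}(e^{-\betastar_{ij}t})Y(0)$, contracting into any neighborhood of the origin for $T$ large, while $X$ moves as the free diffusion, whose transition density is smooth and positive under Hörmander's condition (Assumption~\ref{ass:ergodicitySDE}). This exhibits a lower-semicontinuous, everywhere-positive sub-kernel, so $Z$ is a $T$-process; accessibility of $(x_0,0)$ from every starting state then gives $\psi$-irreducibility and aperiodicity, whence all compact sets are petite.

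Finally, the sublevel sets $\{f\le R\}$ are compact, hence petite, so the drift condition together with petiteness yields, by the geometric ergodicity criterion of \cite{meyn1993stability}, a unique invariant measure $\pi$ and constants $B<\infty$, $\rho<1$ satisfying~\eqref{def:exp_ergodicity}. The main obstacle is the drift estimate for the $Y$-block: constructing $m$ so that the jump-induced exponential growth is strictly dominated by the deterministic decay is exactly where subcriticality $\rho(K)<1$ is used, and it must be combined with the $X$-coupling entering through the baseline $\bastar{\mu}{j}(x)$ (controlled via Assumption~\ref{ass:bounded_baseline2}). The secondary difficulty is petiteness of the non-diffusive, degenerate $Y$-component, which the no-jump contraction argument resolves.
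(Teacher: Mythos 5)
Your drift computation is correct and in fact more self-contained than the paper's: the paper delegates the bound on the jump part of the generator to Proposition~4.5 of \cite{clinet2017statistical}, whereas you construct the exponent explicitly via $m_{ij}=\varepsilon u_i/\betastar_{ij}$ with $u=(I-K^\top)^{-1}\mathbf{1}$, and the inequality $\psi_i(m)=e^{\varepsilon(K^\top u)_i}-1<\varepsilon u_i=m_{ij}\betastar_{ij}$ does hold for $\varepsilon$ small because $K^\top u=u-\mathbf{1}<u$. The combination with $\s{A}^X V_1\le -c_1V_1+d_1\indic_{K_1}$ and the norm-like property of $f$ then gives the geometric drift condition exactly as in the paper.

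The gap is in your petiteness argument. On the no-jump event, $Y(T)=\mathrm{diag}(e^{-\betastar_{ij}T})\,Y(0)$ is a \emph{deterministic} function of $Y(0)$, so the sub-kernel you exhibit has the form $q_T(x,A)\,\delta_{\gamma_y(T)}(B)$ in the $Y$-direction. The map $y\mapsto\indic_B(\gamma_y(T))$ is \emph{not} lower semicontinuous for a general Borel set $B$ (take $B$ a singleton or a closed ball: the indicator jumps down as $y$ leaves the preimage), so this sub-kernel does not verify the $T$-chain condition; nor does it yield a common minorizing measure over a compact set of initial conditions, since the Dirac location varies with $y$ and the flow lines $t\mapsto e^{-\betastar t}\odot y$ never coalesce in finite time. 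An absolutely continuous component in the $Y$-marginal is genuinely needed, and this is precisely why the paper conditions on the event that each component jumps $d$ times in a prescribed order before $T$: the change of variables from the jump times $(t_1,\dots,t_{d^2})$ to $\gamma_y(T)$ produces a density, and integrating $\indic_B$ against that density is lower semicontinuous in $y$ by Fatou. Your no-jump contraction is still useful as part of the accessibility argument (every neighborhood of $(x_0,0)$ is charged), but it cannot by itself deliver the $T$-chain property or petiteness of compacts; you would need to splice in a jump-density step of the type used in Lemma~A.3 of \cite{clinet2017statistical}.
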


This theorem guarantees that the process $Z$ is exponentially ergodic. In particular, $Z(t)$ converges geometrically fast to its invariant measure $\pi$ in the $f$-norm. Furthermore, it satisfies Birkhoff's ergodic theorem: for any function $\psi$ dominated by $f$, we have
\[
\frac{1}{T} \int_0^T \psi(Z(s))\,ds \xrightarrow[T \to \infty]{} \pi(\psi) \quad \text{almost surely and in } \s{L}^1(\mathbb{P}).
\]

This result ensures that the intensity vector $\lamb{\thetastar}{}(t)$ is itself ergodic. Indeed, to establish ergodicity it is sufficient to apply Birkhoff's theorem to all functions in $\s{L}^1$, which guarantees convergence of time averages. For any polynomial function $\psi$, we can write $\psi(\lamb{\thetastar}{}(t)) = \psi \circ \nu(Z(t))$, where $\nu$ maps $Z(t)$ to $\lamb{\thetastar}{}(t)$. Since $\nu$ is built from bounded functions $\bastar{\mu}{i}$ and sums function, we have $\psi \circ \nu(z) \leq f(z)$ for every $z \in \bb{R}^m \times \bb{R}^{d^2}$, so $\psi(\lamb{\thetastar}{}(t))$ is in $\s{L}^1$. It follows that $\lamb{\thetastar}{}(t)$ inherits the ergodicity property from $Z(t)$.

\subsection{Asymptotic functional central limit theorem}

In this section, we present asymptotic results for the counting process and its associated intensity. These results, stated as a Proposition and a Theorem, establish the stability of $\lamb{\theta,i}{}(t)$, a Law of Large Numbers (LLN), and a Central Limit Theorem (CLT) for $N(t)$.

\begin{prop}
\label{prop:stationary_intensity}
Suppose Assumptions \ref{ass:ergodicitySDE} and \ref{ass:bounded_baseline2} hold, and define the stationary version of the intensity as
\begin{equation}
\label{eq:stationary_intensity}
\olamb{\theta,i}{}(t) 
= \bas{\mu}{i}(\ov{X}(t^-)) 
+ \sum_{j=1}^d \int_{(-\infty,t)} \alpha_{ij} e^{-\beta_{ij}(t-s)} \, d\oN{j}{}(s),
\end{equation}
where  $\ov{X} = (\ov{X}(t))_{t\in\mathbb{R}}$ is distributed according to the stationary law $\pi_X$ of the SDE ans where $\oN{j}{} = (\oN{j}{}(t))_{t\in\mathbb{R}}$ denote the stationary Hawkes process.

Then, we have convergence in distribution,
\[
\big(\lamb{\theta^\star,i}{}(t),\, \lamb{\theta,i}{}(t),\, \nabla_\theta \lamb{\theta,i}{}(t)\big)
\xrightarrow[t \to \infty]{\mathcal{L}}
\big(\olamb{\theta^\star,i}{},\, \olamb{\theta,i}{},\, \nabla_\theta \olamb{\theta,i}{}\big).
\]
Moreover, there exist constants $c>0$ (independent of $\theta$) and $C_\theta>0$ (depending on $\theta$) such that,
\[
\mathbb{E}\Big[\big| \lamb{\theta,i}{}(t) - \olamb{\theta,i}{}(t) \big|\Big]
\le C_\theta e^{-ct}.
\]
\end{prop}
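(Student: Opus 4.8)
The plan is to deduce both statements from the exponential ergodicity of the Markovian system already established in Theorem~\ref{theo:exp_ergodicity}, after enlarging the state so that the intensity \emph{and} its $\theta$-gradients become continuous functionals of a Markov process. Writing out $\nabla_\theta\lamb{\theta,i}{}$, the $\mu$-component is $\nabla_\mu\bas{\mu}{i}(X(t^-))$, the $\alpha_{ij}$-component is $\int_{(0,t^-)}e^{-\beta_{ij}(t-s)}\,d\N{j}{}(s)$, and the $\beta_{ij}$-component is $-\int_{(0,t^-)}\alpha_{ij}(t-s)e^{-\beta_{ij}(t-s)}\,d\N{j}{}(s)$. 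Introducing $V_{ij}^\theta(t)=\int_{(0,t)}e^{-\beta_{ij}(t-s)}\,d\N{j}{}(s)$ and $U_{ij}^\theta(t)=\int_{(0,t)}(t-s)e^{-\beta_{ij}(t-s)}\,d\N{j}{}(s)$, which satisfy $dV_{ij}^\theta=-\beta_{ij}V_{ij}^\theta\,dt+d\N{j}{}$ and $dU_{ij}^\theta=(V_{ij}^\theta-\beta_{ij}U_{ij}^\theta)\,dt$, the extended process $\wt Z(t)=(X(t),(Y_{ij}(t)),(V_{ij}^\theta(t)),(U_{ij}^\theta(t)))$ is Markovian with the same structure as $Z$ in~\eqref{eq:markov_eq}. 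I would first check that the proof of Theorem~\ref{theo:exp_ergodicity} carries over to $\wt Z$: the extra coordinates are driven by the same jumps with decay rates $\beta_{ij}\in[\un\beta,\ov\beta]$, so an exponential Lyapunov function of the type $f(x,y)=V_1(x)+e^{\langle m,y\rangle}$ extended to the new coordinates still satisfies the drift inequality, yielding exponential ergodicity of $\wt Z$ towards an invariant law $\wt\pi$.

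For the convergence in distribution, the triple $(\lamb{\thetastar,i}{}(t),\lamb{\theta,i}{}(t),\nabla_\theta\lamb{\theta,i}{}(t))$ equals $\Phi(\wt Z(t))$ for a fixed continuous map $\Phi$ built from the bounded baselines $\bas{\mu}{i}$ and linear combinations of the remaining coordinates. Exponential ergodicity gives $\wt Z(t)\Rightarrow\wt\pi$, and the continuous mapping theorem then yields $\Phi(\wt Z(t))\Rightarrow\Phi(\wt Z_\infty)$ with $\wt Z_\infty\sim\wt\pi$. It remains to identify $\Phi(\wt Z_\infty)$ with the stationary triple of~\eqref{eq:stationary_intensity}: this is exactly the value of the same functional evaluated on the stationary regime of $(X,N)$, i.e.\ on $\wt\pi$, so the two laws coincide.

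The quantitative $L^1$ bound is the substantive part and I would obtain it by coupling. Using Theorem~\ref{theo:exp_ergodicity}, construct on a common space the $\thetastar$-data $(X,N)$ started from $X(0)\sim\pi_0$ with $N$ empty on $(-\infty,0)$, together with its stationary version $(\ov{X},\oN{}{})$, so that the two diffusions meet at a coupling time $\tau$ with $\P(\tau>t)\le Ce^{-ct}$ and coincide thereafter, the same Poisson points driving $N$ and $\oN{}{}$ past $\tau$. Then decompose
\begin{align*}
\olamb{\theta,i}{}(t)-\lamb{\theta,i}{}(t)
&=\big[\bas{\mu}{i}(\ov{X}(t^-))-\bas{\mu}{i}(X(t^-))\big]
+\sum_{j=1}^d\int_{(-\infty,0]}\alpha_{ij}e^{-\beta_{ij}(t-s)}\,d\oN{j}{}(s)\\
&\quad+\sum_{j=1}^d\int_{(0,t)}\alpha_{ij}e^{-\beta_{ij}(t-s)}\,\big(d\oN{j}{}(s)-d\N{j}{}(s)\big).
\end{align*}
The baseline term vanishes for $t\ge\tau$ and is bounded by $2\bas{\mu}{+}$ otherwise, so its expectation is $\le 2\bas{\mu}{+}\,\P(\tau>t)\le Ce^{-ct}$, using only the boundedness in Assumption~\ref{ass:bounded_baseline2}. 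The edge term is nonnegative with mean $e^{-\beta_{ij}t}\,\E\big[\int_{(-\infty,0]}\alpha_{ij}e^{\beta_{ij}s}\,d\oN{j}{}(s)\big]$, finite by stationarity and the exponential moments of $\oN{}{}$ provided by the $f$-norm in Theorem~\ref{theo:exp_ergodicity}, hence $\le C_\theta e^{-\un\beta\,t}$. For the last term, once the diffusions have coupled the two intensities share the same baseline and differ only through the self-excitation inherited from events before $\tau$; this discrepancy obeys a sub-critical linear recursion (thanks to $\rho(K)<1$ in Assumption~\ref{ass:sta_hawkes}) weighted by $e^{-\beta_{ij}(t-s)}\le e^{-\un\beta(t-\tau)}$, so splitting on $\{\tau\le t/2\}$ and its complement, and using $\P(\tau>t/2)\le Ce^{-ct/2}$ together with the finite first moment of the stationary intensity, bounds it by $C_\theta e^{-ct}$. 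Collecting the three contributions gives the claim, with $c$ governed by $\min(\un\beta,\text{the covariate coupling rate},\text{the sub-criticality rate of }K)$ — all independent of $\theta$ — and $C_\theta$ absorbing $\ov\alpha$ and the $\theta$-dependent stationary moments.

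The hardest point is this $L^1$ estimate, and within it the coupling of the \emph{coupled} system: the covariate discrepancy feeds into the Hawkes intensities, so one must first make the diffusions coincide (with only bounded — not Lipschitz — baselines this forces a successful coupling, for which the smooth densities and Lyapunov structure of Assumption~\ref{ass:ergodicitySDE} are exactly what is needed, rather than a merely synchronous one) and only then couple the point processes, letting the exponential kernel together with sub-criticality erase the pre-$\tau$ memory. The gradient components are treated by the same decomposition, the extra polynomial factor $(t-s)$ appearing in the $\beta$-derivative being absorbed into a slightly smaller exponential rate that remains uniform because $\beta_{ij}\ge\un\beta>0$.
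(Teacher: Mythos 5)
Your argument for the quantitative $L^1$ bound is sound and, once unwound, lands on the same mechanism as the paper: the paper's proof (via Proposition~\ref{lemma:speed_cv_stationnary}) uses exactly your three-term decomposition, bounds the baseline term by $\norm{\bas{\mu}{i}}_\infty\,\norm{\mathcal{L}(X(t))-\pi_X}_{TV}\le C_1 r_1^t$ rather than by an explicit coupling time, and controls the third term through the vector renewal inequality $f\le f\star\alphastar e^{-\betastar\cdot}+r$ with $r$ exponentially small, concluding via the argument of Proposition~4.4 of \cite{clinet2017statistical}. Your ``sub-critical linear recursion'' for the post-$\tau$ discrepancy is that same renewal inequality in disguise: the discrepancy of the counting measures is governed by the discrepancy of the \emph{true} intensities, so the recursion must first be closed on $\E\qty(\abs{\lamb{\thetastar,i}{}(s)-\olamb{\thetastar,i}{}(s)})$ and the general-$\theta$ bound then follows by one more convolution, exactly as in the paper. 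Your coupling-time formulation also needs one ingredient the paper avoids: a \emph{successful} coupling of the diffusions with exponential tail (a fixed-$t$ maximal coupling does not make the paths agree after $\tau$). This does follow from the Meyn--Tweedie regeneration structure, but it is machinery that the direct total-variation bound sidesteps.

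The genuine gap is in your route to the convergence in distribution. You propose to enlarge the state to $\wt Z=(X,Y,V^\theta,U^\theta)$ and transfer Theorem~\ref{theo:exp_ergodicity}. The Lyapunov drift indeed extends, but the $T$-chain/irreducibility half does not carry over as claimed: $Y_{ij}$, $V^\theta_{ij}$ and $U^\theta_{ij}$ are all deterministic functions of the \emph{same} jump times, so conditionally on the number of jumps the law of $(Y(T),V^\theta(T),U^\theta(T))$ is supported on a submanifold of dimension at most $d^2$ inside $\bb{R}^{3d^2}$, and the change-of-variables argument of \ref{proof:Tchaindim2} (which needs the map from jump times to the state to hit a set of positive Lebesgue measure) fails on the enlarged space; one would have to set up $\psi$-irreducibility with respect to a measure carried by that manifold, which is a substantial modification rather than a routine check. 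The repair is cheap, however: the first claim only asks for convergence in law of the triple, and this follows directly from the $L^1$ estimates you already establish coordinate by coordinate — including the gradient coordinates, where the polynomial factor $(t-s)$ costs only an arbitrarily small loss in the exponential rate — so the enlarged-chain ergodicity is an unnecessary and, as written, unjustified detour.
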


The proof is given in \ref{proof:prop_stationary_intensity}. The first part follows from the stability results in \cite{bremaud1996stability}, extending classical Hawkes process stability to the covariate-dependent setting. The second part quantifies the exponential rate of convergence to the stationary intensity, a consequence of the exponential kernel and the exponential ergodicity of $X$.

We now generalize classical functional limit theorems for Hawkes processes with a constant baseline, proven in \cite{bacry2013some}. The key difference is that the limiting behavior now involves the stationary mean of the covariate-dependent baseline:
\begin{equation}
\label{eq:barmu}
    \bar\mu:= \int g_{\mustar}(x)\,\pi_X(dx).
\end{equation}
The structure of the LLN and CLT remains unchanged, with the baseline replaced by its stationary average. 

To formalize this result, we introduce the Skorokhod space together with its canonical topology. For general definitions and standard properties, we refer to \cite[Chapter~3]{billingsley2013convergence}. Let $\mathbb{D}([0,1],\bb{R}^d)$ denote the space of $\bb{R}^d$-valued càdlàg functions on $[0,1]$, that is, functions that are right-continuous with left limits. We equip $\mathbb{D}([0,1],\bb{R}^d)$ with the Skorokhod $J_1$ topology, under which convergence corresponds to the existence of time deformations making the trajectories close in the uniform metric. 
\begin{theo}
\label{theo:LLN_CLT}
Suppose that Assumptions \ref{ass:ergodicitySDE}--\ref{ass:bounded_baseline2} hold. Let $\bar\mu$ be defined by Equation \eqref{eq:barmu} where $\pi_X$ denotes the stationary distribution of the process $(X(t))_{t\ge0}$, and define the diagonal covariance matrix
\[
\Sigma_{ii} = \big((I-K)^{-1}\bar\mu\big)_i, 
\qquad 1\le i\le d.
\]
with $K=\qty( \frac{\alphastar_{ij}}{\betastar_{ij}})_{1\le i,j\le d}$. Then, the following yields.
\begin{enumerate}[label=(\roman*)]
  \item \textbf{(Law of large numbers)}  
  \[
  \sup_{v\in[0,1]} 
  \Big\|
  \frac{1}{T}N(Tv)
  - v\,(I-K)^{-1}\bar\mu
  \Big\|
  \;\xrightarrow[T\to\infty]{} 0,
  \quad \text{almost surely and in } L^2(\mathbb{P}).
  \]

  \item \textbf{(Functional Central Limit Theorem)}  
  \[
  \Big(
  \sqrt{T}
  \Big(
  \frac{1}{T}N(Tv)
  - v\,(I-K)^{-1}\bar\mu
  \Big)
  \Big)_{v\in[0,1]}
  \xrightarrow[T\to\infty]{\mathcal{L}}\;
  \big((I-K)^{-1}\Sigma^{1/2}W_v\big)_{v\in[0,1]},
  \]
  where $W=(W_v)_{v\in[0,1]}$ is a standard $d$-dimensional Brownian motion.  
  The convergence in law holds in $\mathbb{D}([0,1],\bb{R}^d)$ endowed with the Skorokhod $J_1$ topology.
\end{enumerate}
\end{theo}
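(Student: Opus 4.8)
The plan is to reduce the statement to a martingale functional central limit theorem by means of an \emph{exact} linear identity that disentangles the self-excitation cascade, generalizing the approach of \cite{bacry2013some}. Write $M:= N - \Lamb{\thetastar}{}$ for the $\R^d$-valued $(\s{F}_t)$-martingale. From Equation~\eqref{eq:kernelY}, $dY_{ij}(t) = -\betastar_{ij}Y_{ij}(t)\,dt + \alphastar_{ij}\,d\N{j}{}(t)$, so a direct integration gives $\int_0^t Y_{ij}(s)\,ds = K_{ij}\N{j}{}(t) - \betastar_{ij}^{-1}Y_{ij}(t)$ with $K_{ij}=\alphastar_{ij}/\betastar_{ij}$. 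Inserting this into $\Lamb{\thetastar,i}{}(t) = \int_0^t \bastar{\mu}{i}(X(s^-))\,ds + \sum_j\int_0^t Y_{ij}(s)\,ds$ and using $N=\Lamb{\thetastar}{}+M$ yields the closed identity
\begin{equation*}
(I-K)\,N(t) = G(t) - R(t) + M(t), \qquad G_i(t):= \int_0^t \bastar{\mu}{i}(X(s^-))\,ds, \quad R_i(t):= \sum_{j=1}^d \frac{Y_{ij}(t)}{\betastar_{ij}}.
\end{equation*}
Since $\rho(K)<1$, $I-K$ is invertible (with nonnegative inverse $\sum_{n\ge0}K^n$), so $N(t)=(I-K)^{-1}(G(t)-R(t)+M(t))$ \emph{exactly}, and, as $(I-K)^{-1}\bar\mu$ is precisely the centering appearing in the statement, everything reduces to the joint behavior of the rescaled triple $(G,R,M)$.

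For the law of large numbers (i), I would treat the three terms at scale $T^{-1}$. By Theorem~\ref{theo:exp_ergodicity} and Birkhoff's theorem, $t^{-1}G_i(t)\to\bar\mu_i$ a.s. (the bounded baseline $\bastar{\mu}{i}$ is dominated by $f$); $t^{-1}R_i(t)\to0$ since $Y$ has finite moments under the invariant law; and $t^{-1}M_i(t)\to0$ by the martingale strong law, because $\langle M_i\rangle(t)=\Lamb{\thetastar,i}{}(t)=O(t)$. Multiplying by $(I-K)^{-1}$ gives pointwise-in-$v$ convergence $T^{-1}N(Tv)\to v\,(I-K)^{-1}\bar\mu$; since each $v\mapsto N_i(Tv)$ is nondecreasing and the limit is continuous, a Dini-type argument (monotone functions converging pointwise to a continuous limit converge uniformly) upgrades this to the uniform statement. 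The $L^2$ version follows from Doob's maximal inequality for $M$ and from boundedness of $T^{-1}G_i(T\cdot)$.

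For the functional CLT (ii), the core is the martingale term. Because distinct components a.s. never jump simultaneously, $\langle M_i,M_j\rangle(t)=\indic_{i=j}\int_0^t\lamb{\thetastar,i}{}(s)\,ds$; ergodicity yields $T^{-1}\langle M_i\rangle(Tv)\to v\,((I-K)^{-1}\bar\mu)_i=v\,\Sigma_{ii}$ in probability, the off-diagonal terms vanish, and the jumps of $T^{-1/2}M(T\cdot)$ have size $T^{-1/2}\to0$, so the conditional Lindeberg condition holds. The martingale FCLT then gives $T^{-1/2}M(T\cdot)\xrightarrow[T\to\infty]{\mathcal{L}}\Sigma^{1/2}W$ in $\mathbb{D}([0,1],\R^d)$ for the $J_1$ topology. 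The boundary term is uniformly negligible, $\sup_v\|R(Tv)\|/\sqrt{T}\to0$, using the exponential moment control on $Y$ afforded by the $f$-norm of Theorem~\ref{theo:exp_ergodicity}. Since multiplication by the fixed matrix $(I-K)^{-1}$ is $J_1$-continuous, the continuous mapping theorem then delivers the claimed limit $(I-K)^{-1}\Sigma^{1/2}W$.

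The step I expect to be the main obstacle is the treatment of the baseline integral $G(Tv)=\int_0^{Tv} g_{\mustar}(X(s^-))\,ds$, which is genuinely new relative to the constant-baseline setting of \cite{bacry2013some}. Its centered fluctuations $T^{-1/2}(G(Tv)-Tv\,\bar\mu)$ are governed by the CLT for additive functionals of the ergodic diffusion $X$, obtained by solving the Poisson equation $\s{A}^X u = \bar\mu - g_{\mustar}$ (well-posed under the Lyapunov and Hörmander conditions of Assumption~\ref{ass:ergodicitySDE}) and applying Itô's formula to $u(X)$. The delicate — and decisive — point is that the stated limit retains \emph{only} the point-process noise $(I-K)^{-1}\Sigma^{1/2}W$: one must therefore show that the contribution of $G$'s fluctuations at scale $\sqrt{T}$ is negligible, and it is precisely here that the finer mixing properties of $X$, together with the joint (rather than marginal) convergence of $G$, $R$ and $M$ in $\mathbb{D}([0,1],\R^d)$, have to be controlled carefully.
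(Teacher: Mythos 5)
Your decomposition is genuinely different from the paper's and, for the exponential kernel, cleaner: you integrate the Markovian equation for $Y$ to get the exact identity $(I-K)N(t)=G(t)-R(t)+M(t)$, whereas the paper works with the centered process $V(t)=N(t)-\bb{E}[N(t)]$, the renewal equation, and the resolvent $\psi=\sum_{n\ge1}h_{\thetastar}^{\ast n}$ (an argument that does not use the exponential form of the kernel). Your LLN argument is sound and arguably simpler than the paper's: ergodicity for $G$, moment bounds for $R$, the martingale strong law for $M$, and a Dini-type upgrade to uniformity using monotonicity of $v\mapsto N(Tv)$ (the paper instead uses Doob's inequality plus Borel--Cantelli along dyadic times). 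The martingale FCLT step for $M^{(T)}$ is also essentially identical to the paper's.

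The gap is the one you flag yourself, and it is not a technicality that "careful control" will dispatch: the claim that $T^{-1/2}\big(G(Tv)-Tv\,\bar\mu\big)$ is uniformly negligible is \emph{false} in general. By the very CLT for additive functionals that you invoke (solve $\s{A}^X u=\bar\mu-g_{\mustar}$ and apply It\^o's formula to $u(X)$), this term converges in $\mathbb{D}([0,1],\bb{R}^d)$ to a Brownian motion with covariance $\Sigma_G=2\int (g_{\mustar}-\bar\mu)\,u^\top\,d\pi_X$, which is non-degenerate unless $g_{\mustar}$ is $\pi_X$-a.s.\ constant. Followed to its conclusion, your exact identity therefore yields a limit of the form $(I-K)^{-1}\big(\Sigma^{1/2}W+\Sigma_G^{1/2}\widetilde W\big)$ with an additional independent Gaussian component coming from the baseline, not the limit stated in the theorem. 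So the proposal does not establish statement (ii); it instead isolates exactly where the difficulty sits. The paper avoids confronting this term by centering with $m(t)=\bb{E}[N(t)]$ and writing the renewal identity $V=M+h_{\thetastar}\ast V$ in which the stochastic baseline enters only through its expectation $\bar G$; your decomposition makes the fluctuation $G-\bar G$ explicit and shows it cannot simply be discarded at scale $\sqrt{T}$. To produce a complete proof you would either have to add the hypothesis that kills $\Sigma_G$, or carry the extra Gaussian component through to the limit.
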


As announced, we find the same structure as in the classical Hawkes process with a constant baseline, as proven by \cite{bacry2013some} for Hawkes processes with a general kernel and a constant baseline. The proof of this theorem, which can be found in \ref{proof:LLN_CLT}, is largely inspired by this work. 

As in \cite{bacry2013some}, the arguments used do not rely on the exponential form of the kernel, and the result should therefore remain valid without this assumption. Furthermore, it should be noted that these asymptotic results only require the ergodicity of $X$ and the boundedness of $\bas{\mustar}{i}$, so that Birkhoff's ergodic theorem applies.


\subsection{Mixing property of the intensity process}

Consider the vector composed of the intensity and its gradient with respect to the parameter $\theta$, denoted $U_{\theta,i}(t)$. For the $i$-th component, it is defined, for $t>0$, as
\begin{equation}
\label{eq:U}
U_{\theta,i}(t):= \big(\lamb{\thetastar,i}{}(t), \lamb{\theta,i}{}(t), \nabla_\theta \lamb{\theta,i}{}(t)\big).
\end{equation}  

In this section, we study the mixing properties of the process $U_{\theta,i}(t)$. These properties are useful for the theoretical analysis of the model and are required to establish the asymptotic normality of the maximum likelihood estimator (see Section~\ref{section:MLE}). In particular, mixing implies ergodicity, which is necessary to ensure consistency of the MLE. Studying the mixing behavior of $U_{\theta,i}(t)$ therefore extends Theorem~\ref{theo:LLN_CLT} which establishes ergodicity only for the true intensity $\lamb{\thetastar}{}(t)$.

Formally, a process $\left(U(t)\right)_{t \ge 0}$ taking values in a space $E$ is said to be $C$-mixing, for a set of functions $C: E \to \bb{R}$, if for any $\phi, \psi \in C$, 
\[
\rho_u:= \sup_{t \ge 0} \operatorname{Cov}\big[\psi(U(t)), \phi(U(t+u))\big] \xrightarrow[u \to \infty]{} 0.
\]

Depending on assumptions about the baseline functions and the covariant process $X(t)$, two distinct mixing regimes can be identified. Under moderate boundedness conditions, the process satisfies a weak mixing property with respect to continuous and bounded functions. When stricter boundedness assumptions are imposed on $\bas{\mu}{i}(X(t))$, a strong mixing property applies. The corresponding assumptions are presented below.

\begin{ass}[Boundedness of the random baseline]
\label{ass:bounded_baseline3}
We assume that there exist two strictly positive constants $g_{-}$ and $g_{+}$ such that one of the following conditions holds.
\begin{itemize}
    \item Either, for all $x \in \bb{R}^m$ 
    \[
    g_{-} \leq \sup_{\mu \in \Theta_1 } \max_{1\leq i \leq d}\bas{\mu}{i}(x) < g_{+}.
    \]

    \item Or, there exist $x_1, x_2 \in \bb{R}^m$ such that for any initial condition $x \in [x_1, x_2]$, the process remains in this interval almost surely: $X(t) \in [x_1, x_2]$ $\bb{P}_x$-a.s., and:
    \[
    g_{-} \leq \sup_{\mu \in \Theta_1 } \max_{1\leq i \leq d}\bas{\mu}{i}(x)   \leq g_{+} \quad \text{for all } x \in [x_1, x_2].
    \]
\end{itemize}
\end{ass}

We now introduce an assumption that ensures the existence of moments for the process $\bas{\mu}{i}\qty(X(t))$. This condition involves two parameters, $(p, j)$, where $p$ specifies the moment order and $j$ the derivative order. Later, different combinations of $(p, j)$ are required, depending on the specific results we aim to prove.
\begin{ass}[\textbf{p,j}](\textit{Moment existence})
\label{ass:moment_g(X)}
We assume  that for all $i \in \llbracket 1, d\rrbracket $ we have the following 
\[ \sup_{t\geq 0} \bb{E}\qty(\norm{\sup_{\mu \in \Theta_1} \nabla_\mu^j \bas{\mu}{i}\qty(X\qty(t))}^p) < + \infty.\]
\end{ass}

\begin{rem}
Assumption~\ref{ass:ergodicitySDE} ensures that $X$ admits moments of all orders, as the Lyapunov function  \ref{eq:lyapunov_function} dominates any polynomial function $x \mapsto |x|^p$. As a result, Assumption~\ref{ass:moment_g(X)} is satisfied whenever $\bas{\mu}{i}(x)$ is a polynomial function of $x$ with coefficients uniformly bounded in~$\mu$.  
For example, $\bas{\mu}{1}(x) = \mu_{11} x^T + \mu_{12}$ satisfies Assumption~\ref{ass:moment_g(X)}.
\end{rem}

Under these strengthened assumptions, we can now state the theorem that guarantees the mixing property.
\begin{theo}[Mixing Properties of the Intensity]
\label{theo:mixing}
Let the state space be $E = \SpaceState$ where $s = \sum_{i=1}^d d_i$ is the dimension of the parameter $(\mu_i)$. 
The augmented intensity process $U_{\theta,i}(t)$, given in Equation~\eqref{eq:U}, satisfies the following mixing properties:
\begin{itemize}
    \item \textbf{Weak Mixing:} If Assumptions \ref{ass:ergodicitySDE}--\ref{ass:bounded_baseline2} hold then for all functions $\psi,\phi \in \s{C}_b(E, \bb{R})$ and for all $\theta \in \Theta$, we have for all $i$: 
    \[ 
   \rho_{\theta,i}(u) =   \sup_{t\geq 0} \cov \qty(\psi\qty(U_{\theta,i}\qty(t)),  \phi\qty(U_{\theta,i}\qty(t+u))) \xrightarrow[u \to \infty]{} 0 
    \]
    \item \textbf{Strong Mixing:} If Assumptions \ref{ass:ergodicitySDE}--\ref{ass:bounded_baseline3} and \ref{ass:moment_g(X)}(p,1) are verified for all $p>1$, then and for $\psi, \phi \in \mathcal{D}(E, \bb{R})$, there exists $r<1$ such that for all $i$:
    $$\rho_{i}(u):= \sup_{\theta \in \Theta} \sup_{t\geq 0} \cov \qty(\psi\qty(U_{\theta,i}\qty(t)),  \phi\qty(U_{\theta,i}\qty(t+u))) = o\qty(r^u) $$
\end{itemize}
\end{theo}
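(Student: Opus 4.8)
The plan is to realise the augmented intensity process $U_{\theta,i}(t)$ as a functional of a finite-dimensional Markov process and then transfer the exponential ergodicity of Theorem~\ref{theo:exp_ergodicity} into covariance decay. First I would enlarge the Markovian representation \eqref{eq:markov_eq}: beyond $(X(t),Y(t))$ — which already produces $\lamb{\thetastar,i}{}(t)$ — I introduce the $\theta$-kernel coordinates $Y^\theta_{ij}(t):=\int_{(0,t)}\alpha_{ij}e^{-\beta_{ij}(t-s)}\,d\N{j}{}(s)$ together with the coordinates generating $\nabla_\theta\lamb{\theta,i}{}$, namely the $\partial_{\alpha_{ij}}$ and $\partial_{\beta_{ij}}$ integrals, while $\partial_{\mu}\lamb{\theta,i}{}(t)=\nabla_\mu\bas{\mu}{i}(X(t^-))$ needs no extra state. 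Each new coordinate solves a stable linear SDE with jumps driven by $N$ (for instance $dY^\theta_{ij}=-\beta_{ij}Y^\theta_{ij}\,dt+\alpha_{ij}\,d\N{j}{}$), so the collection $\wt{Z}(t)$ is Markov w.r.t.\ $(\s{F}_t)$ and $U_{\theta,i}(t)=\Phi(\wt{Z}(t))$ for an explicit map $\Phi$ into $E=\SpaceState$.

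Second, I would upgrade Theorem~\ref{theo:exp_ergodicity} to the augmented chain. Since $\beta_{ij}\ge\underline{\beta}>0$ uniformly on $\Theta_2$, the added coordinates are uniformly exponentially stable, so combining $V_1(x)$ with exponentials of the $\alpha$-type integrals and all-order polynomials of the $\beta$-derivative coordinate (whose weight carries an extra factor $(t-s)$) yields a norm-like Lyapunov function $\wt{f}$ satisfying a drift inequality. This gives an invariant law $\wt{\pi}$ and a bound $\norm{P_t^{\wt{Z}}(\wt{z},\cdot)-\wt{\pi}}_{\wt{f}}\le B\,\wt{f}(\wt{z})\,\rho^t$, with constants that can be chosen uniformly in $\theta$ by compactness of $\Theta$. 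The same drift inequality, integrated in time, delivers the uniform-in-time moment bound $\sup_{t\ge 0}\E[\wt{f}(\wt{Z}(t))]<\infty$, which is precisely what lets me take $\sup_{t\ge0}$ in the mixing coefficient.

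With these ingredients the two regimes follow from the same computation. Writing $\wt{\phi}:=\phi\circ\Phi$ and conditioning on $\s{F}_t$, the Markov property gives $\cov(\psi(U_{\theta,i}(t)),\phi(U_{\theta,i}(t+u)))=\cov(\psi(U_{\theta,i}(t)),(P_u^{\wt{Z}}\wt{\phi})(\wt{Z}(t)))$, hence
\[
\abs{\cov(\psi(U_{\theta,i}(t)),\phi(U_{\theta,i}(t+u)))}\le 2\,\E\big[\,\abs{\psi(U_{\theta,i}(t))}\,\abs{(P_u^{\wt{Z}}\wt{\phi})(\wt{Z}(t))-\wt{\pi}(\wt{\phi})}\,\big].
\]
In the weak regime $\psi,\phi\in\s{C}_b(E,\R)$ are bounded, so $\wt{\phi}$ is bounded and the total-variation part of the $\wt{f}$-norm bound together with $\sup_t\E[\wt{f}(\wt{Z}(t))]<\infty$ forces $\rho_{\theta,i}(u)\to 0$; no moment control on $U$ itself is needed here. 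In the strong regime $\phi\in\cD(E,\R)$ has polynomial growth, so Assumption~\ref{ass:moment_g(X)}(p,1) for all $p>1$ — controlling $\nabla_\mu\bas{\mu}{i}(X(t))$ — together with the Hawkes moment control makes $\wt{f}$ dominate $\wt{\phi}$ and guarantees $\wt{\pi}(\abs{\wt{\phi}})<\infty$. The $\wt{f}$-norm bound then gives $\abs{(P_u^{\wt{Z}}\wt{\phi})(\wt{z})-\wt{\pi}(\wt{\phi})}\le C\,\wt{f}(\wt{z})\,\rho^u$, whence $\rho_i(u)\le C'\rho^u=o(r^u)$ for any $r\in(\rho,1)$, uniformly in $\theta$ by Assumption~\ref{ass:bounded_baseline3} and the uniform constants above.

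The main obstacle I expect is the second step: constructing $\wt{f}$ and proving exponential ergodicity of the \emph{augmented} process. The diffusion-plus-true-kernel block is covered by Theorem~\ref{theo:exp_ergodicity}, but the derivative coordinates are delicate — the $\beta$-derivative integral carries a polynomial-in-time weight $(t-s)e^{-\beta_{ij}(t-s)}$, so it admits only polynomial (not exponential) moments, forcing the mixed exponential/polynomial choice of $\wt{f}$ and explaining why strong mixing requires Assumption~\ref{ass:moment_g(X)}(p,1) for \emph{all} $p>1$. Securing the drift inequality and the invariant measure uniformly over the compact parameter set $\Theta$, so that the final $\sup_{\theta\in\Theta}$ is legitimate, is the other point that needs care.
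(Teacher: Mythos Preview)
Your approach is genuinely different from the paper's, and it has a real gap in the step you yourself flag as ``the main obstacle''. You propose to augment the Markov state to $\wt{Z}=(X,Y,Y^\theta,W^\theta)$ (with $W^\theta$ carrying the $\beta$-derivative integrals) and then \emph{upgrade Theorem~\ref{theo:exp_ergodicity}} to this chain. The Lyapunov side is plausible, but Meyn--Tweedie exponential ergodicity also requires a $T$-chain/irreducibility property, and this fails for the augmented process: $Y$, $Y^\theta$ and $W^\theta$ all jump at the \emph{same} times (the jumps of $N$) with rigidly linked jump sizes, so for each component $j$ the map from jump times to $(Y_{\cdot j}(T),Y^\theta_{\cdot j}(T),W^\theta_{\cdot j}(T))$ cannot have a full-dimensional continuous component --- in particular it degenerates completely when $\beta=\betastar$, and since $\thetastar\in\Theta$ this ruins any hope of constants uniform in $\theta$ for the strong regime. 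A drift inequality alone does not give you the $\wt f$-norm convergence $\norm{P_u^{\wt Z}(\wt z,\cdot)-\wt\pi}_{\wt f}\le B\wt f(\wt z)\rho^u$ that your covariance bound relies on.

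The paper avoids augmenting the Markov state altogether. It uses a \emph{truncation} argument: define $\widetilde U_{\theta,i}(s,t)$ by replacing the integrals $\int_{(0,t)}$ in $U_{\theta,i}(t)$ by $\int_{(s,t)}$, and split
\[
\rho_{\theta,i}(u)\le \underbrace{\sup_{t}\cov\!\big(\psi(U_{\theta,i}(t)),\,\phi(U_{\theta,i}(t{+}u))-\phi(\widetilde U_{\theta,i}(t{+}\sqrt u,t{+}u))\big)}_{A_1}
+\underbrace{\sup_{t}\cov\!\big(\psi(U_{\theta,i}(t)),\,\phi(\widetilde U_{\theta,i}(t{+}\sqrt u,t{+}u))\big)}_{A_2}.
\]
For $A_1$, the exponential kernel gives $\sup_t\bb{E}\norm{U_{\theta,i}(t{+}u)-\widetilde U_{\theta,i}(t{+}\sqrt u,t{+}u)}=O(e^{-\underline\beta\sqrt u/2})$, which suffices (directly in the strong case via polynomial growth of $\nabla\phi$; via tightness and uniform continuity on compacts in the weak case). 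For $A_2$, the truncated future block depends only on increments after $t+\sqrt u$, so conditioning on $\s F_t$ and invoking Proposition~\ref{prop:stationary_intensity} (which needs only the ergodicity of the \emph{base} process $(X,Y)$ from Theorem~\ref{theo:exp_ergodicity}) kills it. The point is that this route never asks for ergodicity of anything beyond $(X,Y)$; all the $\theta$-dependent and derivative pieces are handled through the explicit exponential decay of the kernel, which also makes the $\sup_\theta$ in the strong case immediate from $\beta_{ij}\ge\underline\beta$.
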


The proof of this theorem, presented in \ref{proof:mixing}, follows the approach of \cite{clinet2017statistical} and \cite{potiron2025high}, who establish mixing properties for Hawkes models with constant baseline. The method is based on truncating the memory of the process to neglect the influence of the distant past, which becomes negligible due to the exponential decay of the memory kernel.

We obtain two types of asymptotic behavior, depending on the assumptions made on the covariate process $X$ and the baseline functions $\mu_i$. In all cases, the ergodicity of $X$ is required to ensure the stability of the process and to derive asymptotic results. The strong exponential mixing property holds under Assumptions~\ref{ass:bounded_baseline3} and \ref{ass:moment_g(X)}(p,1), which require uniform boundedness of $\mu_i(X(t))$ and its derivative $\nabla_\mu \mu_i(X(t))$ in $L^p$ for all $p>1$. In the strong case, the mixing property is uniform in $\theta$, contrary to the weak case. Under the weaker Assumption~\ref{ass:bounded_baseline2}, the process remains mixing, but the result is not uniform in~$\theta$; however, this setting relies on weaker assumptions on the baseline functions and still ensures mixing for bounded test functions.

These two types of mixing properties are important because they lead to different levels of convergence for the MLE, as we will see in  Theorem~\ref{theo:AsymptoticNormality}. Indeed, the weak mixing property is sufficient to establish the classical version of the asymptotic normality of the MLE. The strong mixing property allows for the control of the expectations of test functions in $\mathcal{D}(E,\bb{R})$, leading to moment convergence at all order.

\section{Inference via likelihood maximization}
\label{section:MLE}

In this section, we investigate the inferential properties of the Maximum Likelihood Estimator (MLE).  
We work in a parametric setting for the SDE driving the covariate process \(X\), introducing a parameter \(\xi^\star\) such that the drift takes the form \(b(\,\cdot\,):= b(\,\cdot\,, \xi^\star)\). Our objective is to estimate the joint parameter \((\theta^\star, \xi^\star)\), where \(\theta^\star\) specifies the Hawkes component (see Section~\ref{sec:definition}) and \(\xi^\star \in \Xi\) governs the dynamics of \(X\). The set \(\Xi\) denotes the parameter space associated with the diffusion, typically a compact subset.

The asymptotic theory for estimating diffusion coefficients is well-developed in the literature (see, e.g., \cite{kutoyants2013statistical}); hence, our main focus is on the estimation of \(\thetastar\), while the treatment of \(\xi^\star\) is included primarily for completeness.

\subsection{Likelihood}
Let us briefly recall that for a multivariate point process with intensity $\lamb{\theta}{}$, the log-likelihood associated with a trajectory observed over $[0,T]$ in continuous time, is given by (see \cite{daley2003introduction}):
\[
\ell_T^N(\theta):=  \sum_{i=1}^d \qty(\int_0^T \log\big(\lamb{\theta,i}{}(t) \big) \, d\N{i}{}(t) - \int_0^T \lamb{\theta,i}{}(t) \, dt).
\]
In parallel, for a diffusion process \(X\)  with a drift depending on a parameter $\xi$, the log-likelihood of an observed trajectory in continuous time over \([0,T]\) is given by Girsanov’s theorem (see \cite{kutoyants2013statistical} for example):
\begin{align*}
\ell^X_T(\xi)=  & 
  \int_0^T b(X(s),\xi)^\top a(X(s))^{-1} \, dX(s) \\
  & \quad - \frac{1}{2} \int_0^T b(X(s),\xi)^\top a(X(s))^{-1} b(X(s),\xi) \, ds,    
\end{align*}

where $a(x) = \qty(\sigma \sigma^T) (x)$. Hence, for a joint observation of the process \((X(t), N(t))_{t \in [0,T]}\) defined by Equations \eqref{eq:SDE_multidim}-\eqref{eq:modelLambda}, the full log-likelihood function is:
\begin{align*}
\ell_T^{(X,N)}(\theta,\xi) &= \ell_T^N(\theta) + \ell_T^X(\xi).
\end{align*}
Note that the parameter \(\theta\) is only involved in \(\ell_T^N(\theta)\) which is the part of the log-likelihood linked to the Hawkes process. Similarly, \(\xi\) only appears in $\ell^X_T(\xi)$, the diffusion part of the log-likelihood. The MLE computation then relies on the separate maximization of those two terms:

\begin{equation*}\label{eq:mle}
\w{\theta}_T \in \arg \max_{\theta \in \Theta} \, \ell_T^N(\theta) \quad \quad \hat{\xi}_T \in \arg \max_{\xi \in \Xi} \, \ell_T^X(\xi).
\end{equation*}

In this article, we focus only on $\MLE$. As the interaction between the two components is unidirectional —namely, the intensity $\lamb{}{}$ depends on $X$, while $\lamb{}{}$ does not impact $X$ - the computations of $\MLE$ and $\widehat{\xi}_T$ are independent. Furthermore, the estimation of $\xi^\star$ for ergodic diffusions is well known, and our framework does not introduce new contributions in this direction \cite{kutoyants2013statistical}.

Nevertheless, it is worth noting that a joint estimation of $(\theta, \xi)$ could in principle be performed. In that case, the resulting estimators $(\MLE, \widehat{\xi}_T)$ would jointly inherit the asymptotic properties associated with both  $\widehat{\xi}_T$ and $\MLE$, as detailed in later sections.

\subsection{MLE behavior}

We now investigate the properties of $\MLE$, focusing in particular on its consistency and asymptotic normality. Although these properties rely heavily on the mixing condition established in the previous section, they also require two additional assumptions.

The first requirement is a regularity condition with respect to the parameter $\mu$, which ensures the differentiability and continuity of the likelihood function with respect to the estimated parameters.

The second is an identifiability condition, necessary to guarantee that the parameter \(\theta\) can be uniquely recovered from the intensity trajectories. It also ensures that the Fisher information matrix is non-degenerate.

\begin{ass}[\textbf{k}](\normalfont Regularity of $g$)
\label{ass:regularity_g}
Assume that for all $i \indexset{1}{d}$ and all $\mu$, $x \mapsto \bas{\mu}{i}(x)$ is continuous, and that $\mu \mapsto \bas{\mu}{i}(x)$ is $\mathcal{C}^k$ for all $x$.
\end{ass}

\begin{ass}[Identifiability]
\label{ass:identifiability_baseline}
Assume that for all $i \indexset{1}{d}$
\begin{enumerate} [ref=\theass.\arabic*]
    \item  The application $\mu \mapsto \bas{\mu}{i}$ is one-to-one.
\item \label{ass:identifiability_densityXbar} For all $\mu \ne \mustar$, $\pi_X \qty(\qty(\bas{\mu}{i} - \bastar{\mu}{i})^{-1}(\bb{R}^{d_i}\backslash \{0\}))>0$ and $\pi_X\qty((\nabla_\mu \, \bas{\mu}{i})^{-1}\qty(\bb{R}^\star))>0$.    
    \item  \label{ass:identifiability_elliptic}$\sigma^T \sigma (.)$ is semi definite positive and for any $\mu,\mustar$, 
$$\nabla_x (\bas{\mu}{i} - \bastar{\mu}{i})(\ov{X}) = 0 \Rightarrow \nabla_x \qty(\bas{\mu}{i} - \bastar{\mu}{i})=0$$
\end{enumerate}  
with $\ov{X} \sim \pi_X$, the invariant measure associated with $X$.
\end{ass}

Assumption~\ref{ass:ergodicitySDE} already imposes Hörmander's condition together with a dissipative drift, while allowing the diffusion matrix $a(x)=\sigma(x)\sigma(x)^\top$ to be either uniformly elliptic or merely bounded. However, for the identifiability analysis developed below, the boundedness of $a(x)$ is no longer sufficient. In particular, to ensure identifiability of the parameter $\mu$, we require that $\sigma^T\sigma$ is semi-definite positive, as stated in Assumption~\ref{ass:identifiability_elliptic}. To further clarify the need for this specific item, we invite the reader to consult Section~\ref{section:identifiability}, where it is discussed in more detail.

\begin{theo}[Asymptotic normality of the MLE]
\label{theo:AsymptoticNormality}
Suppose Assumptions \ref{ass:ergodicitySDE}, and \ref{ass:identifiability_baseline} hold. Let $\Gamma$ denote the Fisher information matrix and $\xi \sim \mathcal{N}(0, Id)$. Let $\MLE$ be the MLE of $\theta^\star$. Then, 
\[
\bb{E}\qty[\psi\big(\sqrt{T}\,(\MLE - \theta^\star)\big)]
\;\xrightarrow[T \to \infty]{}\;
\bb{E}\qty[\psi\big(\Gamma^{-1/2}\,\xi\big)],
\]
where the class of functions $\psi$ depends on the Assumptions:
\begin{itemize}
    \item \textbf{Convergence in distribution:}  
    $\psi \in C_b(E, \bb{R})$, provided Assumption~\ref{ass:bounded_baseline2}, Assumption \ref{ass:regularity_g}(2) and Assumption~\ref{ass:moment_g(X)}(2,j) for all $j \in \{1,2\}$.

    \item \textbf{Convergence in moment:} $\psi \in D(E, \bb{R})$, provided Assumption~\ref{ass:bounded_baseline3}, Assumption~\ref{ass:regularity_g}(4), and Assumption~\ref{ass:moment_g(X)}(p,j) holds for all $p>1$ and $j \in \{1,2,3,4\}$, together with $\int V_1(x) \,\pi_0(dx) < \infty$, where $\pi_0$ is the law of $X(0)$ and $V_1$ is defined in Assumption~\ref{ass:ergodicitySDE}.
\end{itemize}
\end{theo}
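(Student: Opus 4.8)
The plan is to follow the quasi-likelihood analysis of \cite{clinet2017statistical}, adapting each step to the diffusion-driven baseline. The MLE solves the score equation $\nabla_\theta \ell_T^N(\MLE)=0$, and the strategy is classical: prove consistency, Taylor-expand the score around $\theta^\star$, establish a central limit theorem for the normalized score, and show that the normalized observed information converges to the Fisher matrix $\Gamma$. The two regularity/moment packages in the statement then separate the two regimes of the conclusion.

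First I would establish consistency. By the ergodic theorem applied to the mixing process $U_{\theta,i}$ (Theorems~\ref{theo:LLN_CLT} and~\ref{theo:mixing}), the normalized contrast $\tfrac1T\ell_T^N(\theta)$ converges uniformly on the compact $\Theta$ to a limit of the form $L(\theta)=\sum_i \E\big[\olamb{\theta^\star,i}{}\log\olamb{\theta,i}{}-\olamb{\theta,i}{}\big]$, where $\E$ denotes expectation in the stationary regime and the gradient-moment bounds of Assumption~\ref{ass:moment_g(X)} together with the regularity of Assumption~\ref{ass:regularity_g} justify the uniformity. The difference $L(\theta)-L(\theta^\star)$ is a nonpositive Kullback-type quantity, vanishing only when $\olamb{\theta,i}{}=\olamb{\theta^\star,i}{}$ almost surely. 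Here the identifiability Assumption~\ref{ass:identifiability_baseline}, specifically the positivity of the $\pi_X$-measure of the set where $g_\mu$ and $g_{\mu^\star}$ differ (item~\ref{ass:identifiability_densityXbar}) together with the ellipticity item~\ref{ass:identifiability_elliptic}, forces $\theta=\theta^\star$, so $L$ is uniquely maximized at $\theta^\star$ and $\MLE$ is consistent.

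Next I would derive the asymptotic law. At the truth the score $\tfrac1{\sqrt T}\nabla_\theta \ell_T^N(\theta^\star)=\sum_i\tfrac1{\sqrt T}\int_0^T \tfrac{\nabla_\theta\lamb{\theta^\star,i}{}(t)}{\lamb{\theta^\star,i}{}(t)}\big(dN_i(t)-\lamb{\theta^\star,i}{}(t)\,dt\big)$ is a martingale whose predictable bracket converges, by ergodicity, to $\Gamma$; a martingale central limit theorem then gives $\tfrac1{\sqrt T}\nabla_\theta \ell_T^N(\theta^\star)\xrightarrow{\mathcal L}\mathcal N(0,\Gamma)$. Simultaneously $-\tfrac1T\nabla_\theta^2\ell_T^N(\bar\theta)\xrightarrow{P}\Gamma$ uniformly on a neighborhood of $\theta^\star$, again by ergodic averaging and the second-order moment bounds Assumption~\ref{ass:moment_g(X)}(2,j), $j\in\{1,2\}$. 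Inverting the Taylor expansion $0=\nabla_\theta\ell_T^N(\theta^\star)+\nabla_\theta^2\ell_T^N(\bar\theta)\,(\MLE-\theta^\star)$ yields $\sqrt T(\MLE-\theta^\star)\xrightarrow{\mathcal L}\Gamma^{-1/2}\xi$, which is the convergence-in-distribution statement for $\psi\in C_b(E,\R)$ under the weak-mixing package (Assumptions~\ref{ass:bounded_baseline2} and~\ref{ass:regularity_g}(2)).

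For the moment-convergence case the extra ingredient is a polynomial-type large deviation inequality bounding $\P\big(\|\sqrt T(\MLE-\theta^\star)\|>R\big)$ by an arbitrarily large negative power of $R$; combined with the distributional limit it yields uniform integrability of $\sqrt T(\MLE-\theta^\star)$ and all its powers, hence $\E[\psi(\sqrt T(\MLE-\theta^\star))]\to\E[\psi(\Gamma^{-1/2}\xi)]$ for polynomially growing $\psi\in D(E,\R)$. This is exactly where the strong-mixing conclusion of Theorem~\ref{theo:mixing}, available under Assumptions~\ref{ass:bounded_baseline3} and~\ref{ass:moment_g(X)}(p,j) for all $p>1$, is needed: its exponential covariance decay controls all moments of the score and information uniformly in $\theta$, while Assumption~\ref{ass:regularity_g}(4) supplies the four $\theta$-derivatives required by the expansion. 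I expect the two hard points to be the identifiability step, where the covariate enters only through $g_\mu(X)$ so that separating $\mu$ from $\mu^\star$ genuinely requires the $\pi_X$-support conditions (the reason $\sigma^\top\sigma$ is assumed semi-definite positive), and the verification of the large deviation inequality, where the non-autonomous diffusion-driven baseline must be handled uniformly in $\theta$ using the strong mixing and the high-order moment bounds on $g_\mu(X)$ and its derivatives.
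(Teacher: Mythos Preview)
Your plan is correct and follows essentially the same route as the paper. The only cosmetic difference is packaging: the paper does not carry out the consistency/score-CLT/PLD programme by hand but instead invokes two ready-made black-box results---Theorem~1 of \cite{potironinference} for the convergence-in-distribution case and Theorem~3.14 of \cite{clinet2018statistical} for the moment-convergence case---and then verifies their abstract hypotheses (ergodicity of $U_{\theta,i}$ from Theorem~\ref{theo:mixing}, moment bounds from Proposition~\ref{prop:bounded_moment_general_lambda}, identifiability from Assumption~\ref{ass:identifiability_baseline}). What you describe is precisely the internal machinery of those theorems, so the two arguments coincide in substance. One point worth sharpening: for the moment-convergence regime the paper needs not just $\bb{Y}(\theta)\neq 0$ for $\theta\neq\theta^\star$ but the quadratic nondegeneracy $\inf_\theta -\bb{Y}(\theta)/\|\theta-\theta^\star\|^2>0$, which it obtains by proving that $\Gamma$ is positive definite via a continuous-versus-jump/quadratic-variation argument on $\nabla_\theta\olamb{\theta^\star,i}{}$; your sketch correctly flags identifiability as a hard point but does not separate this stronger version from the weak one.
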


The proof of this theorem, given in \ref{proof:AsymptoticNormality}, relies on the general result of \cite{clinet2017statistical} and \cite{potironinference}, which provides general conditions under which the maximum likelihood estimator for a point process is consistent and asymptotically normal. 

The two cases follow the distinction between weak and strong mixing regimes. 
In the weak mixing set-up (with additional assumptions), the classical CLT holds, providing convergence in distribution for bounded functions. In the set-up of the strong mixing, the result is stronger: it applies to polynomial functions and ensures convergence of moments of all orders. It is worth noting that the convergence of all moments implies convergence in distribution via Prokhorov’s theorem. Indeed, moment convergence guarantees tightness of the sequence of measures. Moreover, any subsequence that converges in distribution must converge to the normal law, because the Gaussian distribution is uniquely determined by its moments. Thus, the convergence in moment is a stronger version of the classical CLT which provides additional information.

\begin{rem}
 In the specific setting of one-dimensional SDEs, i.e., when \(X(t) \in \bb{R}\), the existence of a density further guarantees that Assumption~\ref{ass:identifiability_densityXbar} and the second point of Assumption \ref{ass:identifiability_elliptic} hold. Indeed, the stationary distribution of this SDE is explicitly known in this case and admits a strictly positive density on \(\bb{R}\) (see \cite{kutoyants2013statistical}). The stationary measure then assigns a positive mass to every region of the state space, including those required to distinguish between the different basis functions \(\bas{\mu}{i}\). 
\end{rem}


\subsection{Discussion and relaxation of assumptions}

So far, we have made numerous assumptions, particularly in the context of Theorem~\ref{theo:AsymptoticNormality}. We therefore devote this subsection to discuss some of these assumptions.

\subsubsection*{Boundedness condition}

All the stated theorems assume that the process $\bas{\mu}{i}\qty(X(t))$ is, in one way or another, bounded (see Assumptions \ref{ass:bounded_baseline1}, \ref{ass:bounded_baseline2}, \ref{ass:bounded_baseline3}). This condition generalizes a classical assumption of standard Hawkes processes, in which the constant representing the baseline belongs to a compact set of $\bb{R}_+^*$, 
bounded both from above and from below.
    
The main advantage of this assumption is that it facilitates the dissociation of the behavior of the Hawkes component from the SDE component of the model. This allows us to frame the behavior of the Hawkes component between two Hawkes processes with a constant baseline, the behavior of which has already been studied \cite{ogata1988statistical} \cite{clinet2017statistical} \cite{potiron2025high}.
    
From an application point of view, the relevance of this assumption depends on the context, but in practice many stochastic processes are naturally bounded by constraints. For example, in classical applications of Hawkes models: in finance, buying and selling prices are bounded both from below and from above due to money stocks; in ecology, SDE models describing animal movements are subject to physical constraints.

\subsubsection*{Identifiability conditions}
\label{section:identifiability}

We aim to clarify here some points raised in Hypothesis~\ref{ass:identifiability_baseline}. Accurate parameter identification from observed intensity trajectories requires distinguishing between two sources of variation: the contribution of the variable $X(t)$ through the term $\bas{\mu}{i}(X(t))$, and that of the memory of past events $(Y_{ij}(t))_{ij}$.

The objective of the third point of Assumption \ref{ass:identifiability_elliptic} is therefore to ensure that these two intensity components do not somehow merge. For that, we exploit their structural difference: $\bas{\mu}{i}\qty(X\qty(t))$ evolves according to a SDE (thanks to the regularity of $g$), while $Y_{ij}$ are jump processes, piecewise deterministic and very regular between jumps (even $C^\infty$). If the two contributions became indistinguishable, this would mean that the diffusive part $\bas{\mu}{i}\qty(X\qty(t))$ no longer varies randomly, i.e., its quadratic variation would be zero. Now, this quadratic variation is expressed by:
\[
\nabla_x \bas{\mu}{i}\qty(X\qty(t)) \sigma^T \sigma(X(t)) \nabla_x \bas{\mu}{i}\qty(X\qty(t))^T.
\]
Hypothesis~\ref{ass:identifiability_elliptic} then ensures that this expression can only be canceled if the function $\bas{\mu}{i}$ no longer depends on $X(t)$, which places us back in the usual framework of a Hawkes process with a constant baseline, a known and studied framework.

This remark underlines that what truly matters is not the non-degeneracy of $\sigma^T \sigma$, but rather the impossibility of confusing a stochastic diffusion with a deterministic jump process, suggesting that Hypothesis \ref{ass:identifiability_elliptic} can in fact be relaxed.  A more general but less intuitive way to formulate this idea is: if $\ps{}$ the diffusion process $ \bas{\mu}{i}(X(t^{-}))$ is equal to the jump process $\sum_j Y_{ij}(t)$ then both the diffusion and jump process are identically null.

This shows that the class of admissible SDEs is actually wider than initially considered, and can include some types of hypoelliptic diffusions i.e with a degenerate diffusion matrix $\sigma^T \sigma.$

\section{Model validation and statistical testing}
\label{section:test}

In this section, we present the statistical tests that follow from the asymptotic convergence of the likelihood estimator, established by Theorem~\ref{theo:AsymptoticNormality}, as well as certain properties of the intensity of the model (such as the ergodic property).

\subsection{Testing the baseline parameters}
\label{section:test_param}

We first introduce tests designed to assess the value of the model coefficients. These tests rely both on the asymptotic normality of the estimators and on the existence of a consistent estimator of the Fisher information matrix. Both aspects are detailed in the following corollary.

\begin{coro}
\label{coro:test}
    Let $\hat{I}_T$ be an consistent estimator of $\Gamma$ that is defined by either:
    \begin{equation}
    \label{eq:Ihat}
   -\frac{\nabla_\theta^2 \ell_T^N(\MLE)}{T} \quad  \text{   or } \quad  \frac{1}{T}\sum_{i=1}^d \int_0^T \lamb{\MLE,i}{}(t)^{-2} \nabla_\theta \lamb{\MLE,i}{}(t) \nabla_\theta \lamb{\MLE,i}{}(t)^T d\N{i}{}(t). 
\end{equation}

Then, under Hypothesis of Theorem \ref{theo:AsymptoticNormality}, \(\hat{I}_T\) is asymptotically non-degenerate, and its inverse \(\hat{I}_T\) provides a consistent estimator of the Fisher information matrix \(\Gamma^{-1}\). As a consequence, we have the asymptotic distribution:
\[
\sqrt{T} \, \hat{I}_T^{1/2} (\MLE- \thetastar) \limiteloi \mathcal{N}(0, I_d).
\]
\end{coro}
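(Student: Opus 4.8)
The plan is to recognize this corollary as a studentized (self-normalized) version of the central limit theorem already established in Theorem~\ref{theo:AsymptoticNormality}. That theorem gives that $\sqrt{T}(\MLE - \thetastar)$ converges in distribution to $\Gamma^{-1/2}\xi$ with $\xi \sim \mathcal{N}(0, I_d)$, so the only genuinely new work is to prove that each of the two candidate estimators is consistent, i.e. $\hat{I}_T \to \Gamma$ in probability, after which the conclusion follows by Slutsky's lemma.

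First I would fix the form of the target. From the martingale structure of the score $\nabla_\theta \ell_T^N(\thetastar)$, whose predictable quadratic variation at the true parameter is $\sum_{i=1}^d \int_0^T \lamb{\thetastar,i}{}(t)^{-1}\,\nabla_\theta \lamb{\thetastar,i}{}(t)\,\nabla_\theta \lamb{\thetastar,i}{}(t)^\top\, dt$, the Fisher information is the ergodic limit
\[
\Gamma = \sum_{i=1}^d \bb{E}\!\left[ \olamb{\thetastar,i}{}(0)^{-1}\, \nabla_\theta \olamb{\thetastar,i}{}(0)\, \nabla_\theta \olamb{\thetastar,i}{}(0)^\top \right],
\]
the expectation being taken under the stationary law $\pi$. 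Its non-degeneracy—equivalently the asymptotic non-degeneracy of $\hat{I}_T$—is precisely what the identifiability Assumption~\ref{ass:identifiability_baseline} guarantees, so $\Gamma$ is positive definite and $\Gamma^{1/2},\Gamma^{-1/2}$ are well defined.

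Next I would establish consistency of both estimators through the compensation $d\N{i}{}(t) = dM_i(t) + \lamb{\thetastar,i}{}(t)\,dt$, with $M_i$ an $(\s{F}_t)$-martingale. For the explicit estimator, evaluating at $\thetastar$ the $dt$-part becomes $\tfrac1T \sum_i \int_0^T \lamb{\thetastar,i}{}(t)^{-1}\nabla_\theta\lamb{\thetastar,i}{}(t)\nabla_\theta\lamb{\thetastar,i}{}(t)^\top dt$, which converges to $\Gamma$ by Birkhoff's ergodic theorem (applicable via Theorem~\ref{theo:exp_ergodicity} together with the moment assumptions), while the remainder $\tfrac1T\int_0^T(\cdots)\,dM_i$ vanishes since $T^{-1}M_i(T)\to 0$. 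For the observed-information estimator, a direct computation of $\nabla_\theta^2\ell_T^N(\thetastar)$ combined with the same compensation yields the information-matrix identity: the terms carrying $\nabla_\theta^2\lamb{\thetastar,i}{}$ cancel exactly between the jump integral and the compensator, leaving $-T^{-1}\nabla_\theta^2\ell_T^N(\thetastar)$ equal to the very same quantity up to a vanishing martingale, hence also converging to $\Gamma$. In both cases the substitution of $\MLE$ for $\thetastar$ inside the averages is justified by consistency $\MLE\to\thetastar$ (a by-product of Theorem~\ref{theo:AsymptoticNormality}) together with a uniform-in-$\theta$ law of large numbers on the compact $\Theta$, which I would derive from the mixing property of $U_{\theta,i}$ (Theorem~\ref{theo:mixing}) and the regularity Assumption~\ref{ass:regularity_g}.

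Finally, since $\hat{I}_T\to\Gamma$ (positive definite) in probability, the continuous mapping theorem gives $\hat{I}_T^{1/2}\to\Gamma^{1/2}$ and $\hat{I}_T^{-1}\to\Gamma^{-1}$; writing $\sqrt{T}\,\hat{I}_T^{1/2}(\MLE-\thetastar)=\hat{I}_T^{1/2}\cdot\sqrt{T}(\MLE-\thetastar)$ and applying Slutsky's lemma yields the limit $\Gamma^{1/2}\Gamma^{-1/2}\xi=\xi\sim\mathcal{N}(0,I_d)$. The main obstacle is the uniform-in-$\theta$ convergence needed to replace $\MLE$ by $\thetastar$ in the ergodic averages; this is exactly where the mixing theorem and the regularity and moment assumptions carry the weight, the remainder being martingale bookkeeping and the final Slutsky step.
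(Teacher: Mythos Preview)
Your proposal is correct and follows the standard studentization argument that the paper itself only sketches: the paper does not give a detailed proof of this corollary, merely noting after its statement that the two candidate estimators have the same expectation at $\thetastar$ and treating the rest as an immediate consequence of Theorem~\ref{theo:AsymptoticNormality}. Your outline---consistency of $\hat{I}_T$ via compensation plus ergodicity/mixing, non-degeneracy from Assumption~\ref{ass:identifiability_baseline}, then Slutsky---is exactly the argument the paper leaves implicit, and you correctly identify the uniform-in-$\theta$ ergodic law (needed to pass from $\thetastar$ to $\MLE$) as the only genuinely nontrivial step.
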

Both expressions in Equation~\eqref{eq:Ihat} are in fact very similar as for all $\theta$,\[  \bb{E}\qty(-\frac{\nabla_\theta^2 \ell_T^N(\thetastar)}{T}) = \frac{1}{T} \sum_{i=1}^d \bb{E}\qty(\int_0^T \lamb{\thetastar}{i}(t)^{-2} \nabla_\theta \lamb{\thetastar}{i}(t) \nabla_\theta \lamb{\thetastar}{i}(t)^T d\N{i}{}(t)). \] However, the second expression is often preferred in practice, as it only requires access to the first-order derivatives of the intensity.

The convergence established in Corollary~\ref{coro:test} allows us to develop statistical tests on various parameters of the model. Although this type of procedure is not new (see, for example, \cite{bonnet2024testing}), the interest here lies in their application in a novel framework, in order to test the effect of a baseline dependence on the covariates. 

For example, suppose there exists one value $\mustar_0$ of the parameters for which the baseline function is constant and equal to $\mustar_0$. Testing the null hypothesis \(\Hz: \mustar = \mustar_0\) then allows verifying whether the covariate \(X(t)\) has a significant effect on the intensity. If the null is not rejected, the intensity is effectively constant in space, meaning that we retrieve a Hawkes process with constant baseline. Otherwise, the covariate introduces a spatial modulation of the event rate.

As an example, we present a test that follows from the asymptotic normality result of Corollary~\ref{coro:test}. This corresponds to testing the value of a parameter in a normal distribution with unknown variance. For the sake of conciseness, we only describe this single test here; however, other tests, such as those assessing the equality of two coefficients, are provided in \ref{appendix:test}.

\begin{algorithm}
\caption{Test for one coefficient: $\Hz: \thetastar_i=\thetastar_0$ vs $\Hu: \thetastar_i\neq\thetastar_0$}
\label{Test:OneCoefficientAlgo}
\begin{algorithmic}
\Require Sample $\s{S}:= \{ (t, X(t)): t \in \mathcal{T} \},$ with $\s{T}$ such that $ (T_i)_{ i \in \llbracket 1, N(T)\rrbracket } \subset \s{T}$
\begin{enumerate}
    \item Compute the MLE $\hat{\theta}_T$;
    \item Compute $\hat{I}$ as in Equation~\eqref{eq:Ihat};
    \item Compute 
    \[
        \s{Z}_i:=\frac{\sqrt{T}(\hat{\theta}_{T,i} - \thetastar_{0})}{\hat{\sigma_i}}, 
        \quad \text{with } \hat{\sigma_i}=\sqrt{(\hat{I}^{-1})_{ii}};
    \]
    \item Reject $\Hz$ if $\lvert \s{Z}_i \rvert > q_{1-\alpha/2}$, 
    where $q_{1-\alpha/2}$ is the $1-\alpha/2$ quantile of the standard Gaussian distribution.
\end{enumerate}
\end{algorithmic}
\end{algorithm}


\subsection{Test on the model} 

We now introduce a test aimed at assessing the adequacy of the proposed model.  
A classical approach for evaluating the fit of a point process model relies on the time-change theorem \cite{papangelou1972integrability}, which states that the increments of the true compensator follow an i.i.d.\ exponential distribution with unit mean.  
Let $(T_i)$ denote the jump times of the process $N$. The theorem ensures that the random variables $E_i$ defined as
\[E_i:= \Lambda_{\thetastar}(T_i) - \Lambda_{\thetastar}(T_{i-1}), \qquad i \in \mathbb{N},
\]
are i.i.d. from exponential distribution with rate $1$, that we denote $\mathcal{E}(1)$.

This motivates a goodness-of-fit procedure based on a plug-in version of this property.  
If the estimated intensity $\lambda_{\widehat{\theta}_T}(t)$ is close to the true one, then the transformed increments
\[
\widehat{E}_i:= \Lambda_{\widehat{\theta}_T}(T_i) - \Lambda_{\widehat{\theta}_T}(T_{i-1})
\]
should be approximately exponentially distributed with parameter~1. Yet there was no theoretical guarantee for this test, and empirical evidence suggests that it does not perform well: the resulting p-values tend to be too large, leading to overestimation of model adequacy (see, e.g., \cite{bonnet2024testing}).

A first theoretical justification of this phenomenon was proposed in \cite{baars2025asymptotically} which proved that the process $\qty( \frac{1}{\sqrt{T}}\qty(\Lamb{\MLE}{}(uT) - \Lamb{\thetastar}{}(uT)) )_{u \in [0,1]}$ converges to the process $\qty( u \bb{E}\qty(\nabla_\theta \olamb{\theta}{}) \Gamma^{-1/2} \mathcal{N}(0, I_d))_{u \in [0,1]}$. Building on this result, \citep{baars2025asymptotically} also proposed a test for model adequacy. Although this test is theoretically valid in our context, its practical use remains limited. Indeed, the mentioned test statistics involve computing the compensator between two points, $\tau_{i-1}$ and $\tau_i$, which are not jump points. In our case, the compensator itself must already be approximated, due to the term $\int \bas{i}{\mu}(X(t))dt$, which introduces numerical errors and can be computationally demanding. Therefore, computing the integral of the compensator adds yet another source of error and can be quite long. As a result, making the test operational would require both a large amount of data and a long observation window, which is not feasible in practice.

We now turn to the following theorem, which extends the previous framework by introducing a correction to the compensator increments. Unlike earlier results, it establishes that these corrected increments converge to i.i.d.\ exponential random variables. In particular, the theorem provides a theoretical guarantee ensuring the convergence of an associated estimator under the null hypothesis $H_0$.

\begin{theo}
\label{theo:gof_corrige}
Suppose Assumptions \ref{ass:ergodicitySDE}, ~\ref{ass:bounded_baseline3}, \ref{ass:regularity_g}(4), and \ref{ass:identifiability_baseline} hold, together with Assumption \ref{ass:moment_g(X)}$(p,j)$ for all $p>1$ and $j \in \{1,2,3,4\}$, and assume $\int V_1(x) \,\pi_0(dx) < \infty$, where $\pi_0$ is the law of $X(0)$ and $V_1$ is defined in Assumption~\ref{ass:ergodicitySDE}.

Let $\wN{}{}$ and $\N{}{}$ be two independent stationary multivariate Hawkes processes with stationary law $\N{}{}$, and denote by $(\wT{j}{})$ and $(\T{j}{})$ their associated jump times. Define for all $ i \indexset{1}{d}$ the stationary intensities
\begin{align*}
&\wlamb{\theta,i}{}(t) = \bas{\mu}{i}(\widetilde{X}(t)) + \sum_{j=1}^d \int_{-\infty}^{t^-} \alpha_{ij} e^{-\beta_{ij}(t-s)}\, d\wN{j}{}(s), \\
& \lamb{\theta,i}{}(t) = \bas{\mu}{i}(X(t)) + \sum_{j=1}^d \int_{-\infty}^{t^-} \alpha_{ij} e^{-\beta_{ij}(t-s)}\, d\N{j}{}(s),    
\end{align*}
where $X, \widetilde{X} \sim \pi_X$ are distributed according to the stationary law of the SDE. 
Define the MLE $\wMLE$ computed using only the process $\wN{}{}$. The corrected increments are defined as follows
\begin{equation}\label{eq:Eicorrected}
\w{E}_i(T):= \Lamb{\wMLE}{}(T_i) - \Lamb{\wMLE}{}(T_{i-1}) + \frac{1}{\sqrt{T}} (T_i - T_{i-1}) \, \widetilde{\rho}_T \, \widetilde{I}_T \, \widetilde{X},
\end{equation}
with 
\[
\widetilde{\rho}_T:= \frac{1}{T} \int_0^T \nabla_\theta \wlamb{\wMLE}{}(t) \, dt,  \, \text{ }
\widetilde{I}_T:= \frac{1}{T} \sum_{i=1}^d \int_0^T \wlamb{\wMLE,i}{-2}(t) \nabla_\theta \wlamb{\wMLE,i}{}(t) \nabla_\theta \lamb{\wMLE,i}{}(t)^\top \, d\wN{i}{}(t).
\]

Then,
\[
(\w{E}_i(T))_{i\in \mathbb{N}} \xrightarrow[T \to \infty]{\mathcal{L}}\bigotimes_{j=1}^\infty \mathcal{E}(1),
\]
that is, the sequence of compensated increments converges in law to i.i.d.\ standard exponential random variables.
\end{theo}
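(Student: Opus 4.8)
The plan is to trace the corrected increments back to the genuine compensator increments of the stationary process $\N{}{}$ driven by the true parameter, for which the time-change theorem already delivers the desired exponential law, and then to show that the estimation error together with the explicit correction contributes only a vanishing remainder. Recall that convergence in law of the infinite sequence $(\w{E}_i(T))_{i\in\mathbb{N}}$ is equivalent to convergence of every finite-dimensional marginal, so it suffices to fix $k$ and study $(\w{E}_1(T),\dots,\w{E}_k(T))$. First I would apply the time-change theorem of \cite{papangelou1972integrability} to the stationary Hawkes process $\N{}{}$ with intensity $\lamb{\thetastar}{}$: the exact increments $E_i:=\Lamb{\thetastar}{}(T_i)-\Lamb{\thetastar}{}(T_{i-1})$ are i.i.d.\ with law $\mathcal{E}(1)$. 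The task then reduces to proving $\w{E}_i(T)-E_i\to 0$ for each fixed $i$ and jointly over the block $i\le k$, after which a Slutsky / continuous-mapping argument transfers the i.i.d.\ exponential law of $(E_1,\dots,E_k)$ to the corrected increments and, $k$ being arbitrary, yields convergence to $\bigotimes_{j=1}^\infty\mathcal{E}(1)$.

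Second, I would Taylor-expand the plug-in increment in $\theta$ around $\thetastar$, writing
\[
\Lamb{\wMLE}{}(T_i)-\Lamb{\wMLE}{}(T_{i-1})-E_i=\Big(\int_{T_{i-1}}^{T_i}\nabla_\theta\lamb{\thetastar}{}(s)\,ds\Big)^{\!\top}(\wMLE-\thetastar)+R_i,
\]
with $R_i$ collecting the second- and higher-order terms. The decisive structural fact is that $\wMLE$ is built solely from the independent copy $\wN{}{}$, so $\wMLE-\thetastar$ is independent of $\N{}{}$ and of the interval $[T_{i-1},T_i]$; this lets the limiting perturbation be handled as an independent Gaussian factor rather than an object entangled with the increments. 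I would then use exponential ergodicity (Theorem~\ref{theo:exp_ergodicity}) and Birkhoff's theorem to replace the short-interval integral by $(T_i-T_{i-1})\,\bb{E}(\nabla_\theta\olamb{\thetastar}{})$, and the consistency of $\widetilde{\rho}_T$ and $\widetilde{I}_T$ (via ergodicity and Corollary~\ref{coro:test}) to identify the explicit correction $\frac{1}{\sqrt{T}}(T_i-T_{i-1})\widetilde{\rho}_T\widetilde{I}_T\widetilde{X}$ with precisely this leading first-order bias, so that the two cancel. The order-$1/\sqrt{T}$ size of $\wMLE-\thetastar$ and the quantitative moment control provided by the strong (moment-convergence) version of Theorem~\ref{theo:AsymptoticNormality} — available because the present hypotheses coincide with those of its moment case — make this cancellation exact in the limit, while the remainder obeys $R_i=O_p(\norm{\wMLE-\thetastar}^2)=O_p(1/T)$ thanks to the four-derivative regularity (Assumption~\ref{ass:regularity_g}(4)) and the moment bounds (Assumption~\ref{ass:moment_g(X)}$(p,j)$, $j\le 4$) that dominate $\nabla_\theta^2\lamb{\theta}{}$ uniformly.

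The hard part will be controlling the interaction between the global MLE fluctuation and the local inter-jump increments. Specifically, one must (i) justify the ergodic replacement of $\int_{T_{i-1}}^{T_i}\nabla_\theta\lamb{\thetastar}{}(s)\,ds$ by its stationary mean with a discrepancy that, once multiplied by the $O(1/\sqrt{T})$ estimation error, is genuinely negligible and does not leak dependence across increments through the common factor $\wMLE-\thetastar$; and (ii) verify that the correction, assembled entirely from the independent tilde-system and a stationary draw, reproduces the first-order bias in the appropriate limiting sense. Both points hinge on the independence of the two Hawkes systems and on the moment convergence of the estimator, which together guarantee that the subtracted quantity shares the Gaussian limit of the bias it removes and that no higher-order correlation survives; the regularity and moment assumptions are what keep $R_i$ uniformly integrable across the block, so that the finite-dimensional limits are exactly those of the exact increments $E_i$.
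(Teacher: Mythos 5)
Your proposal is correct and its backbone coincides with the paper's: write the plug-in increment as the exact increment $E_i:=\Lamb{\thetastar}{}(T_i)-\Lamb{\thetastar}{}(T_{i-1})\sim\mathcal{E}(1)$ plus a first-order term of size $O_{\bb{P}}(1/\sqrt{T})$ plus a remainder, and control both the bias and the added correction through the independence of the two Hawkes systems and the moment bounds on $\widetilde{\rho}_T$, $\widetilde{I}_T$ inherited from the moment-convergence case of Theorem~\ref{theo:AsymptoticNormality}. Two remarks on where you diverge. First, your "cancellation" framing is stronger than what is needed or what actually happens for the limits claimed here: the bias term involves the Gaussian fluctuation of $\wMLE-\thetastar$ while the correction involves an \emph{independent} draw $\widetilde{X}$, so the two cannot cancel pathwise; they merely share a limit law. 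For a \emph{fixed} index $i$ both terms are individually $O_{\bb{P}}(1/\sqrt{T})$ and hence vanish, which is exactly how the paper argues (it bounds $\bb{E}\,\|\tfrac{1}{\sqrt{T}}\Delta T_i(\rho\Gamma^{-1/2}X-\widetilde{\rho}_T\widetilde{I}_T\widetilde{X})\|$ by $C/\sqrt{T}$ using independence of $(T_i)$ from the tilde system); the cancellation only matters for the aggregated test statistic, which is not what this theorem asserts. Your ergodic replacement of $\int_{T_{i-1}}^{T_i}\nabla_\theta\lamb{\thetastar}{}(s)\,ds$ by its stationary mean is therefore superfluous machinery, though harmless. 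Second, for the joint convergence you invoke Slutsky directly from $\w{E}_i(T)-E_i\to 0$ in probability, which is cleaner than the paper's route: the paper proves pairwise and then finite-dimensional factorization by conditioning on the jump times and computing characteristic functions inductively, and then adds a separate tightness step for the infinite sequence (which, under the product topology on $\bb{R}^{\mathbb{N}}$, your finite-dimensional argument already renders unnecessary). Since the $E_i$ do not depend on $T$, your shortcut is valid and arguably preferable; just make sure you state the convergence $\w{E}_i(T)\to E_i$ in probability (not merely in law), as that is what licenses the joint transfer.
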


The proof of this Theorem can be found in \ref{proof:BaarsTest}. The assumptions required in Theorem~\ref{theo:gof_corrige} coincide with those used to establish moment convergence. They are needed here to control the expectations of the estimators of the Fisher information matrix and of $\widehat{\rho}_T$. In addition, ergodicity of the process guarantees the convergence of time averages toward their expectations, which is essential for the asymptotic validity of the test.

It is worth noting that Theorem~\ref{theo:gof_corrige} assumes two independent samples $(N, \widetilde{N})$ for technical reasons. This assumption simplifies the theoretical proof, since it avoids dealing with the joint law of the estimator~$\MLE$ (which depends on $N$) and the jump times $(T_i)$ themselves. 

In practice, we are mainly interested in situations with few observations—sometimes as few as one, as is common in neuroscience or biology. When many independent repetitions are available, as in some financial applications, the existing test of \cite{reynaud2014goodness} can be applied. Here, we use the same observed realization of~$N$ for both estimation and testing, and we show that the correction remains numerically valid even in this single-observation setting.

The corresponding testing procedure presented in Algorithm \ref{algo:GofCorr}.

\begin{algorithm}[!htbp]
\caption{Corrected GOF procedure for testing $\Hz:  N \stackrel{\mathcal{L}}{=} \N{\thetastar}{} $ for some $\thetastar \in \Theta$}
\label{algo:GofCorr}
\begin{algorithmic}
\Require Sample $\s{S}:= \{ (t, X(t)): t \in \mathcal{T} \},$ with $\s{T}$ such that $ (T_i)_{ i \in \llbracket 1, N(T)\rrbracket } \subset \s{T}$
\begin{enumerate}
    \item Compute $\MLE$ MLE estimator of $\thetastar$.
    \item Compute 
    \begin{align*}
       & \widehat{\rho}_T:= \frac{1}{T} \int_{(0,T)} \nabla_\theta \lamb{\MLE}{}(t)\,dt, \\
        &\widehat{I}_T:= \frac{1}{T} \int_{(0,T)} \nabla_\theta \lamb{\MLE}{}(t)
     \nabla_\theta \lamb{\MLE}{}(t)^\top 
    \lamb{\MLE}{}(t)^{-2}\, d\N{}{}(t).
    \end{align*}
    \item Compute the compensated increments
    \(
    \Delta_i \Lamb{\MLE}{}:= 
    \int_{(T_{i-1},T_i)} \lamb{\MLE}{}(t)\,dt.
    \)
    \item Draw $x \sim \mathcal{N}(0,I_d)$ and form the corrected variables
    \(
    \hat{E}_i(T):= 
     \Delta_i \Lamb{\MLE}{}
    + \frac{1}{\sqrt{T}}(T_i-T_{i-1}) \, 
    \widehat{\rho}_T \widehat{I}_T^{-1/2} x.
    \)
    \item Compare $\qty(\w{E}_i(T))_{i \indexset{1}{N(T)-1} }$ to an exponential distribution of parameter 1.
\end{enumerate}
\end{algorithmic}
\end{algorithm}
This goodness-of-fit test is illustrated in the following section.

\section{Numerical illustration}
\label{section:simu}
The aim of this section is to illustrate the behavior of the considered point process, as well as its asymptotic convergence, while highlighting some observations related to the relaxation of assumptions on the SDE. With this in mind, we present three simulation frameworks.

\subsection{Benchmark Simulation: Well-Specified OU-Driven Hawkes Process}

We first focus on a two-dimensional Hawkes process driven by a bivariate Ornstein-Uhlenbeck process $X(t)$. This example satisfies all the conditions required by the theorems presented above, and its main purpose is to illustrate the properties previously demonstrated.

\paragraph{Simulation set-up} We consider a two-dimensional Ornstein–Uhlenbeck process for the covariate $X(t)$. Namely, we take \(dX(t) = -\xi X(t)\,dt + \sqrt{2|\xi|}\,dW(t)\), with \(\xi^\star = 0.05\) and initial condition \(X_0 = (0, 0) \in \bb{R}^2\). The kernel parameters are \(\alphastar = \begin{pmatrix} 0.3 & 0.4 \\ 0.5 & 0.4 \end{pmatrix}\) and \(\betastar = (0.8, 1.5)\). Baseline functions for the Hawkes process are \(\bastar{\mu}{1}(x) = \mustar_{12} + (\mustar_{11} - \mustar_{12}) \exp(-5 \|x - x_1\|^2)\) and \(\bastar{\mu}{2}(x) = \mustar_{22}\) with \(x_1 = (0.1, 0.1)\), \(\mustar_{11} = 0.5\), \(\mustar_{12} = 0.8\) and \(\mustar_{22} = 0.7\).

\paragraph{Estimation set-up}  For the estimation, we consider the following class of baseline functions: \(\bas{\mu}{i}(x) = \mu_{i,2} + (\mu_{i,1} - \mu_{i,2}) \exp(-5 \|x - x_1\|^2) \) where \(x_1 = (0.1, 0.1)\). Hence, the estimation procedure allows for the possibility that the second component of the Hawkes process also depends on the covariate \(X\) (as the estimation phase can lead to a non-constant $\bas{\hat{\mu}}{2}$).  Let us recall that we do not seek to compute the parameter defining the diffusion, but only $\MLE$.

After estimation, we test the null hypothesis 
\(\Hz: \mustar_{i,1} = \mustar_{i,2}\) for \(i \in \{1,2\}\) 
using Test~\ref{test:testdifftheta}, in order to evaluate the dependence of each process on the covariate.

\begin{figure}[!htbp]
    \centering
    \begin{subfigure}[b]{0.32\textwidth}
        \centering
        \includegraphics[width=\textwidth]{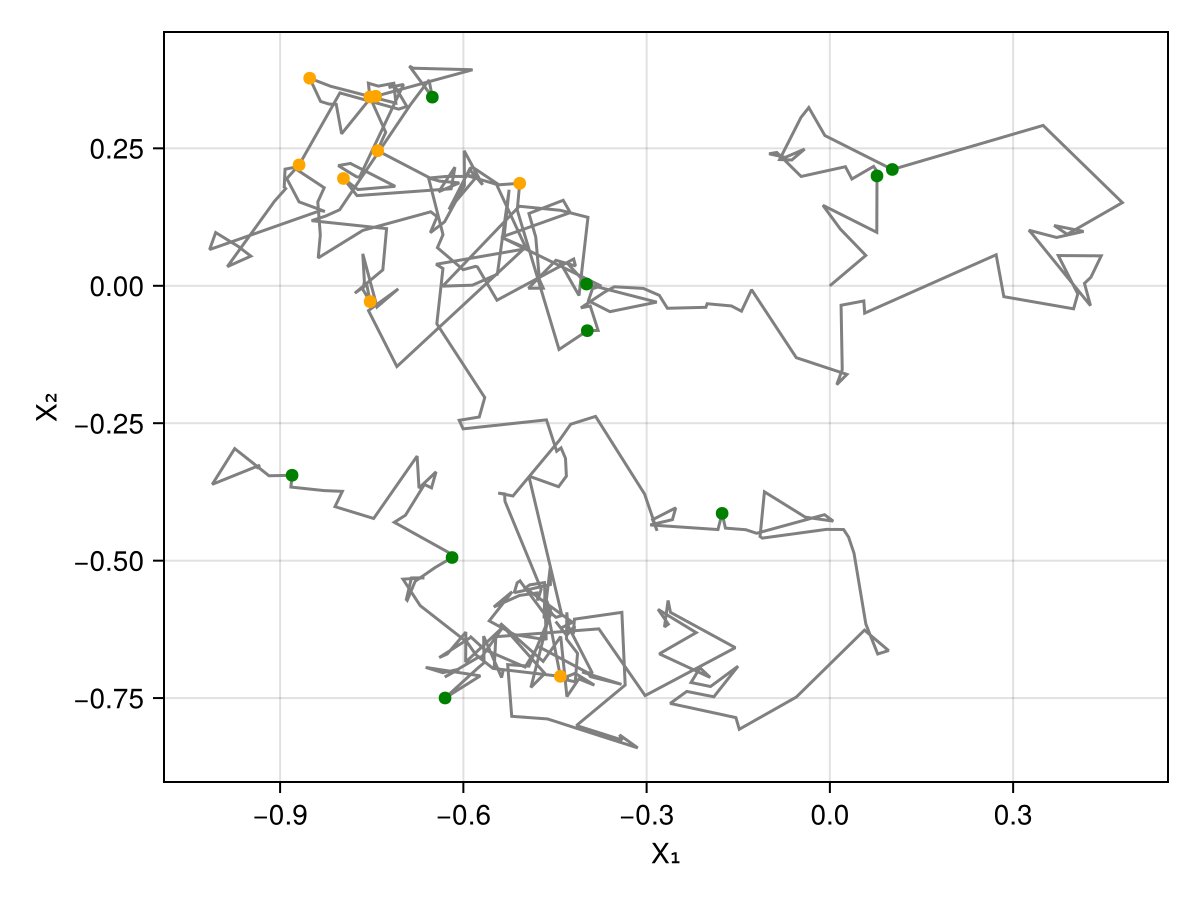}
        \caption{Example of trajectory of \(X\)}
        \label{fig:simu1Process}
    \end{subfigure}
    \begin{subfigure}[b]{0.32\textwidth}
        \centering
        \includegraphics[width=\textwidth]{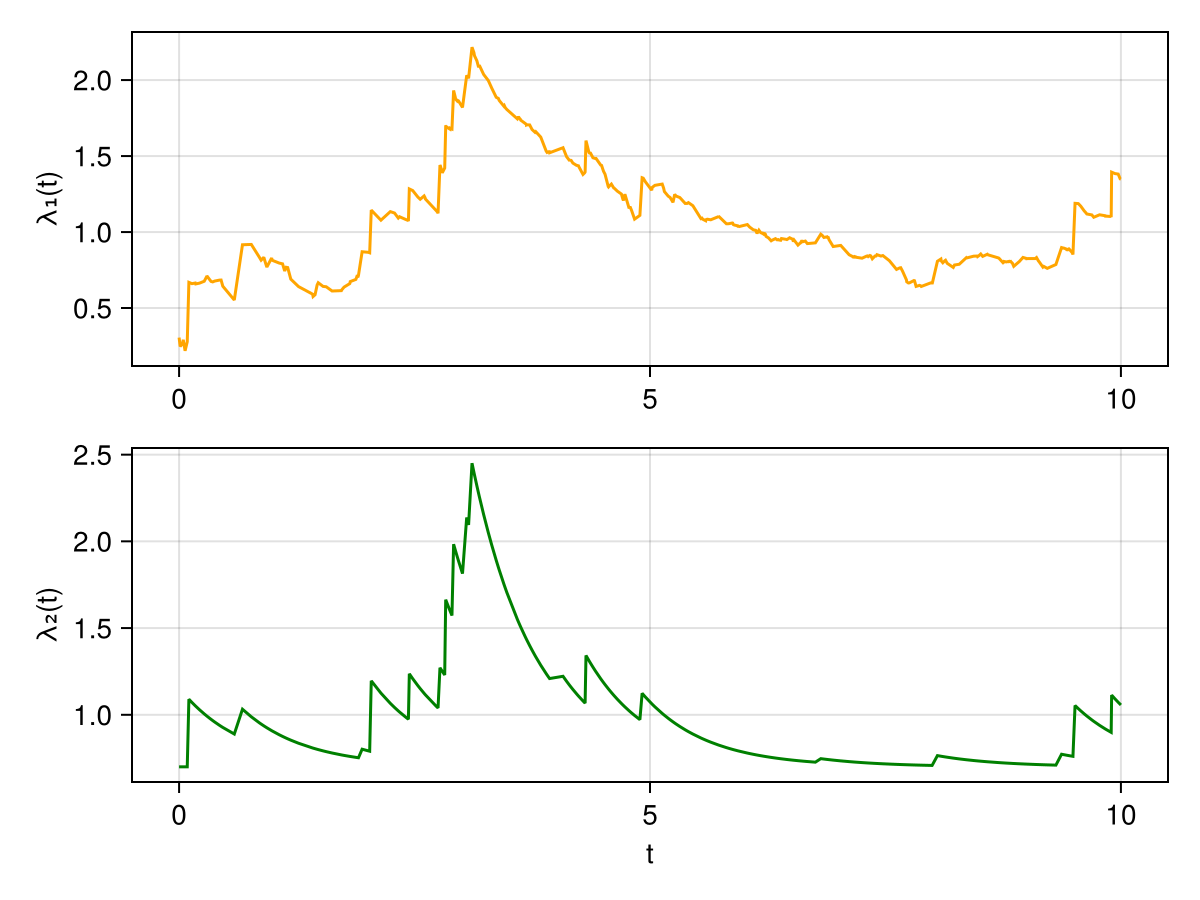}
        \caption{Intensity process \(\lamb{}{}(t)\)}
        \label{fig:simu1Intensity}
    \end{subfigure}
    \begin{subfigure}[b]{0.32\textwidth}
        \centering
        \includegraphics[width=\textwidth]{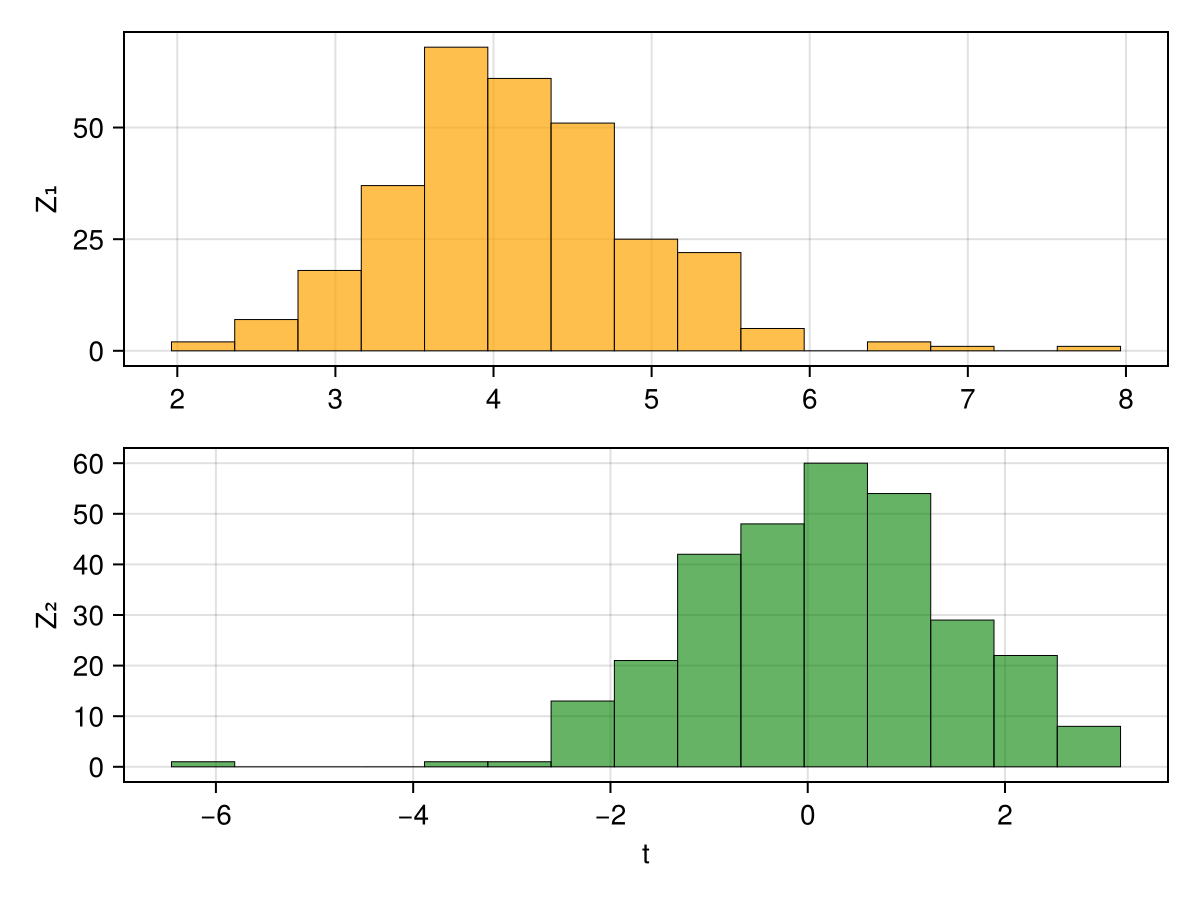}
        \caption{Test statistic distribution}
        \label{fig:simu1Stat}
    \end{subfigure}
    \caption{Trajectory of the covariate process \(X\), with colored dots indicating event times —yellow for \(N_1\), green for \(N_2\)— along the path (Figure~\ref{fig:simu1Process}), together with the associated intensity functions (Figure~\ref{fig:simu1Intensity}). 
    Figure~\ref{fig:simu1Stat} shows the distribution of the test statistic for 
    \(\Hz: \mustar_{i,1} = \mustar_{i,2}\) with \(i \in \{1,2\}\).  For Figure \ref{fig:simu1Process}--\ref{fig:simu1Intensity}  $T= 10$ whereas for Figure \ref{fig:simu1Stat} $T=3000$ and $n=300$ repetitions were used.}

    \label{fig:Simu1}
\end{figure}
Figure~\ref{fig:simu1Process}--\ref{fig:simu1Intensity} shows both the trajectory of the covariate process \(X\) and the intensity functions of the bivariate Hawkes process. This figure illustrates the influence of \(X\): the intensity of process \(N_1\) exhibits larger irregularity 
due to its dependency on the covariate, while the intensity of \(N_2\), whose baseline function is constant, corresponds to the classical behavior of a Hawkes process without covariates.  

Figure~\ref{fig:Simu1} also displays the distribution of the test statistic under the null hypothesis 
\(\Hz: \mustar_{i,1} = \mustar_{i,2}\), with \(i \in \{1,2\}\), illustrating the asymptotic normality established in Theorem~\ref{theo:AsymptoticNormality}. As shown in Figure~\ref{fig:simu1Stat}, both estimators are asymptotically normal but not centered at the same value. The statistic \(Z_2\), which corresponds to the test for \(N_2\), is centered at zero, which is consistent with the fact that the simulations were performed without any dependence between the process and the covariate. In contrast, \(Z_1\) is not centered at zero, reflecting the dependence of the baseline of process \(N_1\) on \(X\).

\subsection{Stress-Test Simulation: Hypoelliptic Diffusion via the Kramers Oscillator}

In this example, we are interested in hypoelliptic diffusion, i.e., diffusion with a degenerate diffusion matrix. This case aims at illustrating remark~\ref{section:identifiability} and shows that estimation and testing can be performed in this setting.

\paragraph{Simulation set-up}

The covariate is a two-dimensional process \((X(t), V(t))\) evolving according to the Kramers oscillator dynamics (see \cite{pilipovic2025strang}): 
\begin{align*}
    & dX(t) = V(t)\,dt \\
    & dV(t) = (-\eta_\star V(t) + a_\star X(t) - b_\star X(t)^3)\,dt + \sqrt{\sigma_\star^2}\,dW(t) , 
\end{align*}
 with parameters \(\eta_\star = 0.65\), \(a_\star = 1\), \(b^\star = 0.6\), \(\sigma_\star^2 = 0.1\), and initial condition \(X_0 = (1.0, 0.0)\). The Hawkes process is a one-dimensionnal Hawkes process with baseline function being form \(g_{\mustar}(x,v) = \mustar_1 + \mustar_2 \tfrac{(1 - x)^2}{2}\), with \(\mustar_1 = 0.2\) and \(\mustar_2 = 0.5\). The kernel parameters are \(\alphastar = 0.6\) and \(\betastar = 2.0\).

\paragraph{Estimation set-up} For the estimation phase, we consider the following class of baseline functions: \(g_{\mu}(x,v) = \mu_1 + \mu_2 \tfrac{(1 - x)^2}{2}\).

\begin{figure}[!htbp]
    \centering
    \includegraphics[scale=0.40]{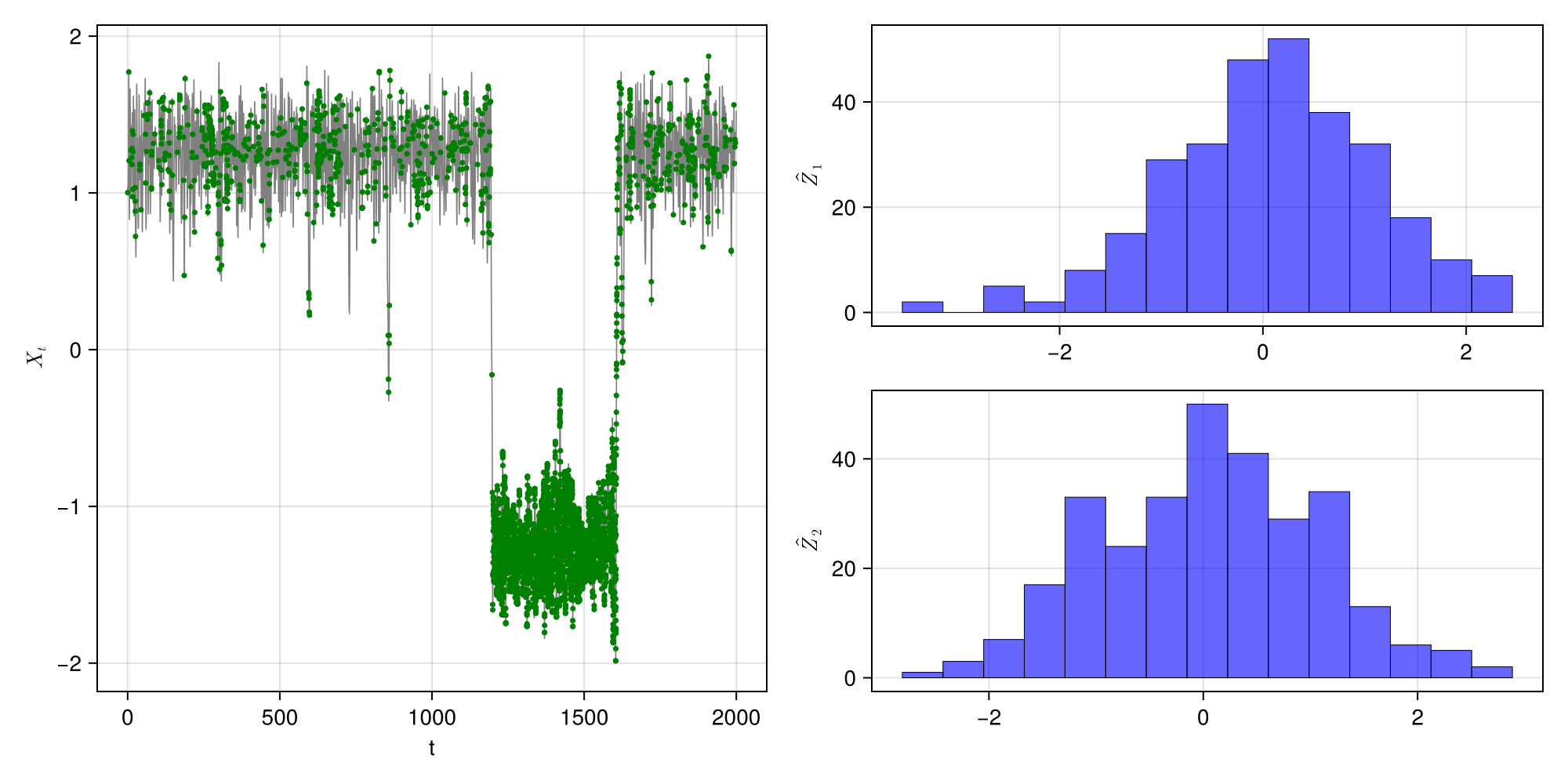}
    \caption{Left: trajectory of the process \(X\) over time, with green dots indicating the event times for \(N_1\) along the path. Right: histograms of the estimators under the null hypotheses \(\Hz: \mu_1^\star = 0.2\) (top) and \(\Hz: \mu_2^\star = 0.5\) (bottom). For this simulation $T= 2000$ and $n=300$ repetitions were used.}
    \label{fig:Process2Estimation}
\end{figure}

On the left graph of Figure \ref{fig:Process2Estimation}, we observe the first component of the process $(X,V)$, as well as the event times of the point process $N$, represented by green dots. It is clear that events tend to occur when $X(t)$ moves away from the level $x=1$, which is consistent with the shape of the baseline function. On the right graph, the histograms of the estimators associated with the coefficients $\mu_1$ and $\mu_2$ exhibit a Gaussian shape, despite the fact that the underlying SDE process does not satisfy all the conditions stated in the assumptions of Theorem \ref{theo:AsymptoticNormality}. 

To better understand this behavior, note that although the diffusion system has a degenerate diffusion matrix, it is still ergodic (see, e.g., \cite{mattingly2002ergodicity, pilipovic2025strang}). This degeneracy, however, prevents the  identifiability conditions (e.g Assumption \ref{ass:identifiability_elliptic}) from being met. As explained in Section \ref{section:identifiability}, this does not affect the asymptotic behavior of the estimators, and the normal-approximation distribution remains reasonable in this setting, as illustrated in Figure \ref{fig:Process2Estimation}.

\subsection{Benchmark simulation: Goodness-of-Fit Test for Model Adequacy}

In this section, we illustrate the behavior of the goodness-of-fit procedure in the context of Algorithm~\ref{algo:GofCorr}. We focus here on the impact of the estimation step on the empirical distribution of the corrected inter-event times $(\widehat{E}_i(T))$.

\paragraph{Simulation set-up}  We consider a one-dimensional Ornstein–Uhlenbeck process for the covariate $X(t)$. Namely we take  \(dX(t) = -\xi X(t)\,dt + \sqrt{2|\xi|}\,dW(t)\), with \(\xi^\star = 0.05\) and initial condition \(X_0 = (0, 0) \in \bb{R}^2\). The jump component is one-dimensional with kernel parameters \(\alphastar = 0.8\) and \(\betastar = 0.9\). The baseline function for the Hawkes process is \(\bastar{\mu}{1}(x) = \mustar_{12} + (\mustar_{11} - \mustar_{12}) \exp(- \|x - x_1\|^2)\) with \(x_1 = (0.1, 0.1)\), \(\mustar_{11} = 0.5\), \(\mustar_{12} = 1.0\), and \(\mustar_{22} = 0.2\). 
\paragraph{Estimation set-up}  For the estimation, we consider the following class of baseline functions: \(\bas{\mu}{i}(x) = \mu_{i,2} + (\mu_{i,1} - \mu_{i,2}) \exp(-  \norm{x - x_1}^2) \) where \(x_1 = (0.1, 0.1)\).

\begin{figure}[!htbp]
    \centering
    \includegraphics[scale = 0.5]{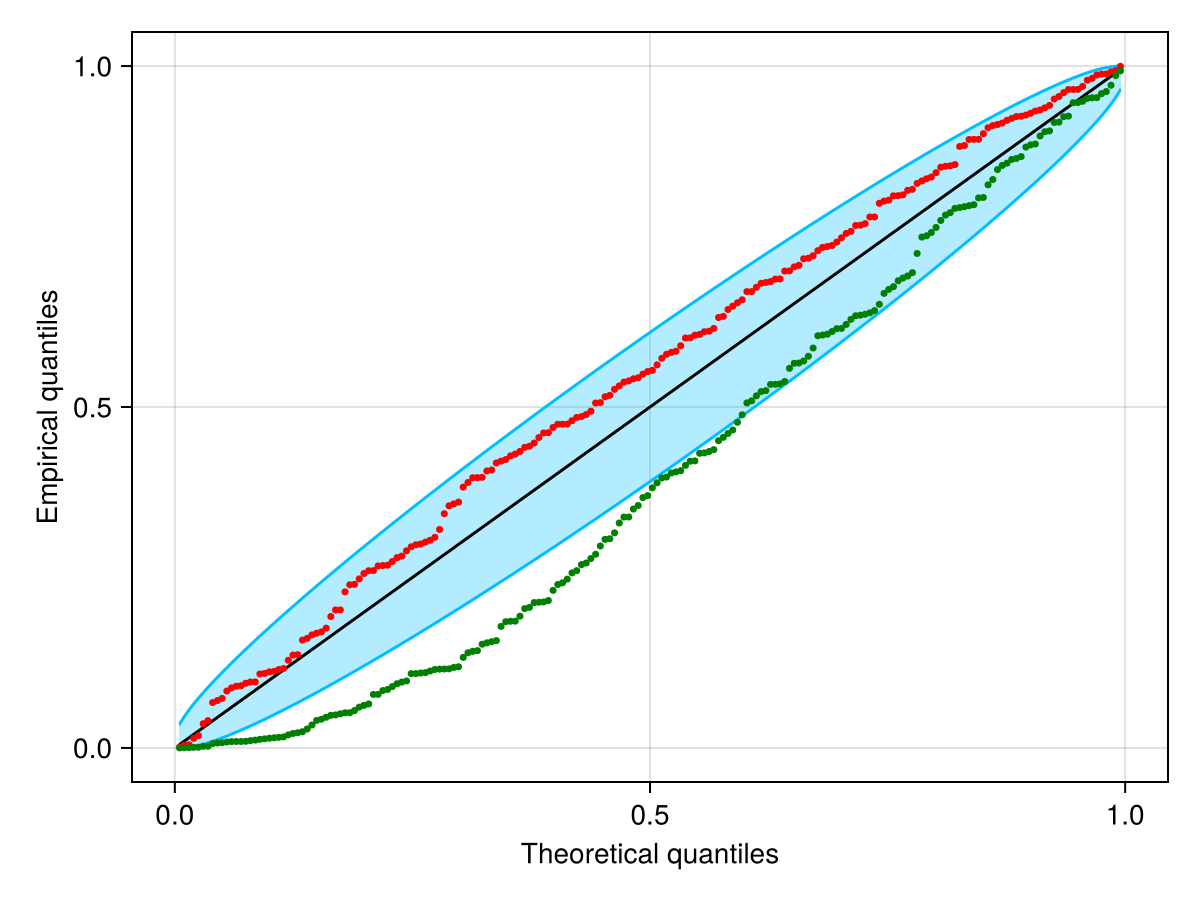}
    \caption{Illustration of Algorithm \ref{algo:GofCorr}. Green points show the distribution of p-values when the corrected increments $(\w{E}_i(T))$, defined in Equation \eqref{eq:Eicorrected}, are tested directly against a unit exponential distribution using a KS test. Red points correspond to p-values obtained after randomly subsampling a subset of size $N(T)^{2/3}$ from $(\w{E}_i(T))$ and applying the KS test to this subset. The p-values are compared to a standard uniform distribution using a QQ plot: the theoretical quantiles of the uniform distribution are shown as a black line, with 95\% confidence intervals represented as the blue region.}
    \label{fig:subsampling_KS}
\end{figure}

In Figure~\ref{fig:subsampling_KS}, we present the distribution of the p-values obtained from the test described in Algorithm~\ref{algo:GofCorr} and examine whether they follow a uniform distribution, as expected under correct model specification.
The test results, represented by the green dots, show that we reject the hypothesis of a uniform law for the p-values. This suggests that the test statistic converges at a slower rate than the expected $\sqrt{N(T)}$. The issue therefore lies in a mismatch between the convergence rate of the test statistic and that of the Kolmogorov–Smirnov test. Addressing this properly would require the development or the study of nonparametric tests, which is beyond the scope of the present work. For this reason, we instead focus on a simple and practical adjustment that restores a test with an appropriate rejection rate. 

One way to correct this problem is to subsample the $(\w{E}_i(T))$ before applying the nonparametric test. This approach relaxes the critical limits of the test and restores the uniformity of the p-values under $\Hz$. This idea is not new and has already been used in the literature on goodness-of-fit testing. For instance, \cite{baars2025asymptotically} introduced a clustering procedure in which a hyperparameter $c$ controls the aggregation of points before applying the KS test to the resulting clustered increments. Our approach can be viewed as a simplified variant of this method, where random subsampling replaces explicit clustering. Another related example is Test 4 in \cite{reynaud2014goodness}, which evaluates model adequacy for point processes observed over multiple independent repetitions. In their setting, subsampling is performed across repetitions, and the authors show that when the ratio between the subsample size and the total number of repetitions tends to zero, the p-values recover uniformity under $\Hz$.

Figure~\ref{fig:subsampling_KS} illustrates the impact of our subsampling procedure on the distribution of p-values. In particular, the red dots represent the distribution of p-values obtained when a random subsample of size $N(T)^{2/3}$ is drawn from $(\w{E}_i(T))$, and the Kolmogorov-Smirnov test is applied to this reduced sample. It should be noted that the size of the subsample is chosen arbitrarily, as there is no clear theoretical argument for this particular exponent. Nevertheless, this choice was made in light of the recommendations in \cite{reynaud2014goodness}, since our subsampling procedure is based on the same principle.

\section{Conclusion and discussion}
In this work, we have introduced a generalization of the Hawkes process where the baseline is driven by a covariate process, modeled as the solution of a SDE. We extended several results known for standard Hawkes processes to this setting, including ergodicity, mixing properties, and functional limit theorems. We also developed the asymptotic theory for statistical inference in the long-time regime, establishing consistency and asymptotic normality of the maximum likelihood estimator, as well as moment convergence under stronger conditions on the covariate process. Finally, we proposed hypothesis testing procedures to evaluate the relevance of the covariate and supported our theoretical findings through simulation studies.

The assumptions made open several perspectives of generalization for this work. A central element of our framework is the use of an exponential kernel, which ensures that the joint process $Z$ is Markovian and allows us to rely on the ergodicity theory of strong Markov processes developed in \cite{meyn1993stability}. However, several results do not rely directly on the Markov property, but only on the ergodicity of $X$. For instance, both the Central Limit Theorem and weak mixing results depend on the existence of a stationary invariant measure to which $X$ converges in distribution, as well as on the validity of Birkhoff’s ergodic theorem. This indicates that our results could potentially be extended to Hawkes processes with non-exponential kernels, provided the covariate process $X$ remains ergodic. Similarly, the theorem establishing the asymptotic distribution of the MLE does not rely on the Markovian structure. Instead, it is based on the convergence in distribution and on the fact that the process forgets its past sufficiently fast (in our case, at an exponential rate). These ideas are also used in \cite{potiron2025high} (Lemma 13), where it is shown that a Hawkes process with a general kernel $h$ is mixing. Similar reasoning appears in Lemma 6.6 of \cite{clinet2017statistical}, which focuses specifically on the exponential kernel. This indicates that the mixing and asymptotic normality results could potentially be extended to a broader class of kernels beyond the exponential case.

A similar line of reasoning applies to the modeling of the covariate process. In our framework, $X(t)$ is defined here as the solution of a SDE, see Equation~\eqref{eq:SDE_multidim}, with state-dependent but time-independent drift and diffusion coefficients.   
Let us highlight that this choice is mainly motivated by the application in neuroscience described in the introduction. Indeed, in this case, the intensity process describes the spiking activity of the mouse during the learning phase of the experiment, and the continuous covariate $X$ describes its move in the arena.
But this choice also facilitates the theoretical analysis. In particular, it fits within the framework of Markov processes, which is central when establishing ergodicity. However, as in the proof of the ergodicity of $Z$ (see Section \ref{theo:exp_ergodicity}), most arguments employed in this paper do not rely directly on the specific SDE model assumed for $X$, but rather on the properties it induces. For example, in the proofs of the functional CLT, \ref{proof:LLN_CLT}, we only use that  $X$ admits a stationary distribution toward which it converges—and that $X$ satisfies Birkhoff’s ergodic theorem. Although such properties hold for certain SDEs, it is not necessary for $X$ itself to be defined as an SDE for these assumptions to be valid. Therefore, results such as the functional CLT, weak mixing, and the convergence in distribution of MLEs could be extended to the case where $X$ is a general process that is left-continuous, exponentially ergodic and satisfies Birkhoff’s ergodic theorem.

Another important perspective raised by our study concerns the use of goodness-of-fit tests for corrected increments. One perspective would therefore be to characterize the convergence rate of the test statistic, which would allow the KS test to be adjusted or develop new nonparametric procedures that are adapted to the test of an asymptotic distribution (in contrast to the KS-test which is non-asymptotic).

\subsection*{Acknowledgements}
This work is part of the 2022 DAE 103 EMERGENCE(S) - PROCECO project supported by Ville de Paris.

This paper was completed while C. Dion-Blanc was affiliated
with the Centre de Math\'ematiques Appliqu\'ees (CMAP) at Ecole Polytechnique. She is grateful to Sylvie 
M\'el\'eard for the opportunity to join her team, supported by the European Union (ERC, SINGER, 101054787).


\bibliographystyle{elsarticle-num}
\bibliography{biblio.bib}

\newpage

\appendixheaderon
\begin{changemargin}{-2.5cm}{-2cm}

\begin{appendix} 

\appendixheaderon
\section{Proofs}
\label{section:proof}
This section is dedicated to the proofs of the theoretical results set out in this article.

\subsection{Technical results}

We begin the proofs by introducing some additional notation and a few preliminary results that will be used throughout the proofs.

\paragraph{Additional notation}
Recall that \((T_n)_{n \geq 1}\) denotes the sequence of event times corresponding to the jumps of the multivariate counting process \(N = (N_1, \dots, N_d)\) and that the inter-arrival times are defined by \(\Delta T_n:= T_n - T_{n-1}\), and \(K_n \in \{1, \dots, d\}\) indicates the component in which the \(n\)-th jump occurs, that is, \(N_{K_n}\) jumps at time \(T_n\).
\\

We first collect two propositions that will be used throughout the subsequent proofs.
\begin{prop}
\label{prop:bounded_moment_general_lambda}
Consider the process  $\wlamb{i}{}(t) = \widetilde{m}(t) + \sum_{j=1}^d \int_{(0,t)} \widetilde{h}_{ij}(t-s) d\N{j}{}(s)$ with $\widetilde{h}$ a positive function and $\widetilde{m}$ an adapted stochastic process verifying:
    \begin{enumerate}
    \item  $\bb{P}-a.s \text{  }, \quad  \widetilde{m}(t) >0 $ for all $t$,
    \item  If for some $q \in \bb{N} \backslash \{0,1\}$ and $\widetilde{h}_{ij} \in L^q$ and $\sup_{t\in \R^+} \bb{E}\qty(\widetilde{m}(t)^q)< + \infty$,
        \end{enumerate}
        
then for all $i \indexset{1}{d}$ and all $l \in \llbracket 1,q\rrbracket$, $\sup_{t\in \bb{R}_+} \bb{E}\qty(\abs{\wlamb{i}{}(t)}^l) < + \infty$ 

\end{prop}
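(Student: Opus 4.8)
The plan is to bound the top-order moment and deduce all the others for free. Since $\widetilde\lambda_i(t)\ge 0$ (because $\widetilde m>0$ and $\widetilde h_{ij}\ge 0$), Lyapunov's inequality gives $\|\widetilde\lambda_i(t)\|_{L^l(\mathbb{P})}\le \|\widetilde\lambda_i(t)\|_{L^q(\mathbb{P})}$ for every $l\le q$, so it suffices to prove $\sup_t\mathbb{E}[\widetilde\lambda_i(t)^q]<\infty$. Applying Minkowski's inequality to the decomposition $\widetilde\lambda_i(t)=\widetilde m(t)+\sum_{j=1}^d\int_{(0,t)}\widetilde h_{ij}(t-s)\,dN_j(s)$ splits the problem into the baseline term, controlled directly by the hypothesis $\sup_t\mathbb{E}[\widetilde m(t)^q]<\infty$, and the $d$ convolution terms $J_{ij}(t):=\int_{(0,t)}\widetilde h_{ij}(t-s)\,dN_j(s)$, which carry all the difficulty.

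To handle $J_{ij}$, I would use the predictable (Doob--Meyer) decomposition of the model counting process, $dN_j(s)=\lambda^\star_j(s)\,ds+dM_j(s)$, where $\lambda^\star_j$ is the true intensity and $M_j$ is a square-integrable $(\mathcal{F}_t)$-martingale. Writing $J_{ij}(t)=\int_{(0,t)}\widetilde h_{ij}(t-s)\lambda^\star_j(s)\,ds+\int_{(0,t)}\widetilde h_{ij}(t-s)\,dM_j(s)$, the drift part is estimated by the integral Minkowski inequality, $\|\int\widetilde h_{ij}(t-s)\lambda^\star_j(s)\,ds\|_q\le \|\widetilde h_{ij}\|_{L^1}\,\sup_s\|\lambda^\star_j(s)\|_q$, which is finite once one knows that the model intensity has uniformly bounded $q$-th moments. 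This last fact follows from the exponential ergodicity of $Z=(X,Y)$ (Theorem~\ref{theo:exp_ergodicity}): the ergodicity function $f(x,y)=V_1(x)+e^{\langle m,y\rangle}$ dominates every polynomial in $y$, so $\sup_t\mathbb{E}[Y_{ij}(t)^q]<\infty$ and, using the boundedness of the baseline, $\Lambda_q:=\sup_t\max_j\|\lambda^\star_j(t)\|_q<\infty$. Alternatively, $\Lambda_q$ can be obtained self-containedly by induction on $q$: the same decomposition turns $t\mapsto\|\lambda^\star_i(t)\|_q$ into a renewal inequality $w_i\le c+\sum_j h^\star_{ij}*w_j$ with a uniform constant $c$, which has a bounded solution because the $L^1$-norm matrix of the kernel is exactly $K=(\alpha^\star_{ij}/\beta^\star_{ij})$, of spectral radius $<1$ by Assumption~\ref{ass:sta_hawkes}.

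For the martingale part I would invoke the Burkholder--Davis--Gundy inequality: since the jumps of $M_j$ coincide with those of $N_j$ and are of unit size, the bracket equals $\int_{(0,t)}\widetilde h_{ij}(t-s)^2\,dN_j(s)$, whence $\|\int\widetilde h_{ij}(t-s)\,dM_j(s)\|_q^2\le C_q\,\|\int_{(0,t)}\widetilde h_{ij}(t-s)^2\,dN_j(s)\|_{q/2}$. The right-hand side is an object of exactly the same type as $J_{ij}$, but with kernel $\widetilde h_{ij}^2\in L^{q/2}$ and a halved moment order. I would therefore close the estimate by a finite (dyadic) induction on the moment order: at each step BDG lowers the order from $r$ to $r/2$ and squares the kernel, and the recursion terminates once the order drops to $1$, where the bracket's moment is just its expectation $\int\widetilde h_{ij}^{2^k}(t-s)\mathbb{E}[\lambda^\star_j(s)]\,ds\le\Lambda_1\,\|\widetilde h_{ij}\|_{L^{2^k}}^{2^k}$. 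All these bounds are uniform in $t$ and recombine, through Minkowski, into $\sup_t\mathbb{E}[\widetilde\lambda_i(t)^q]<\infty$.

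The main obstacle is the martingale term: controlling $\|\int\widetilde h_{ij}\,dM_j\|_q$ forces one to re-enter an integral against $dN_j$ through the BDG bracket, so the estimate is genuinely recursive, and the delicate points are (i) ensuring the recursion closes uniformly in $t$ --- which rests on the uniform moment bound $\Lambda_q$ of the model intensity, and ultimately on sub-criticality $\rho(K)<1$ --- and (ii) the integrability bookkeeping on $\widetilde h_{ij}$, since the successive squarings call for $\widetilde h_{ij}\in L^{2^k}$; this is where the hypothesis $\widetilde h_{ij}\in L^q$ (together with $\widetilde h_{ij}\in L^1$ for the drift part, and interpolation for the intermediate exponents) is used. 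A cleaner but heavier alternative that requires only $\widetilde h_{ij}\in L^1\cap L^q$ is to expand $\mathbb{E}[J_{ij}(t)^q]$ over set partitions in terms of the factorial moment densities of the stationary sub-critical Hawkes process $N_j$, which are uniformly bounded and integrable under Assumption~\ref{ass:sta_hawkes}.
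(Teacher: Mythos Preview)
Your architecture is exactly the paper's: decompose the stochastic integral against $dN_j$ into a drift $\int\widetilde h_{ij}\lambda^\star_j\,ds$ and a compensated martingale $\int\widetilde h_{ij}\,dM_j$, control the drift via Minkowski and a uniform moment bound $\Lambda_q$ for the true intensity, and then use a BDG-type inequality on the martingale piece. The paper also first establishes $\Lambda_q<\infty$ for $\lambda^\star$ by a self-contained induction on the thinning iterates (your ``renewal'' alternative), so your two-step plan matches.

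There is, however, a genuine gap in the martingale step. You apply BDG with the \emph{optional} bracket $[M]_t=\int\widetilde h_{ij}^{\,2}\,dN_j$, which forces you back into an integral against $dN_j$ with a squared kernel, and you then iterate dyadically. The recursion terminates when the moment order drops below $1$, at which point the base case requires $\widetilde h_{ij}\in L^{2^k}$ with $2^k$ the smallest power of~$2$ at least~$q$. When $q$ is not a power of~$2$ this exceeds $q$, and interpolation between $L^1$ and $L^q$ does \emph{not} yield $L^p$ for $p>q$; for instance with $q=3$ your terminal step asks for $\|\widetilde h_{ij}\|_{L^4}$, which need not be finite. The paper avoids this by using Kunita's inequality (in the form with the \emph{predictable} compensator), which bounds $\mathbb{E}\big[|\int\widetilde h_{ij}\,dM_j|^l\big]$ by a constant times $\mathbb{E}\!\int\widetilde h_{ij}^{\,l}\lambda^\star_j\,ds + \mathbb{E}\big[(\int\widetilde h_{ij}^{\,2}\lambda^\star_j\,ds)^{l/2}\big]$. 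Both terms are now Lebesgue integrals, so a single H\"older step reduces them to $\|\widetilde h_{ij}\|_{L^l}$, $\|\widetilde h_{ij}\|_{L^2}$ and $\Lambda_{l/2}$, $\Lambda_l$ --- all available under the hypothesis $\widetilde h_{ij}\in L^1\cap L^q$ and the already-proved moment bounds for $\lambda^\star$. No further recursion into $dN$-integrals is needed. Replacing BDG (optional bracket) by Kunita (predictable bracket plus $L^l$-jump term) closes your argument cleanly.
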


The proof of this Proposition can be found in \ref{proof:bounded_moment_general_lambda}. There are two main ingredients in this proof. Firstly, the following inequality: for any 
\[
x,y,z>0, p\geq1, (x+y)^p \leq (1 + z)^{p-1} x^p + (1 + z^{-1}) ^{p-1} y^p,
\] which allows us to dissociate the dependence, at the moment of order p, of the different elements that make up the intensity. Another key point is Lemma A.2 of \cite{clinet2017statistical}, which is in fact an application of Kunita's inequality, see \cite{applebaum2009levy}, to relate p-order moments to the product of lower-order moments.

\begin{prop}
\label{lemma:speed_cv_stationnary}
Suppose that Assumption~\ref{ass:ergodicitySDE}--\ref{ass:bounded_baseline2} are verified. Then:
\begin{enumerate}
    \item there exists $c>0$ such that for all $t\geq 0$:
\[ 
\bb{E}\qty(\vert   \lamb{\thetastar,i}{}(t) - \olamb{\thetastar,i}{}(t) \vert) \leq C e^{-ct}.
\]
\item  if $\int V_1(x)\pi_0(dx)< + \infty$, with $V_1$ defined in Equation \eqref{eq:lyapunov_function} and $\pi_0$ the law of $X(0)$ then, for all $\mu \in \Theta_1$,
\[ \bb{E}\qty(\vert \bas{\mu}{i}(X(t)) - \bas{\mu}{i} (\ov{X}(t))\vert) \leq \norm{\bas{\mu}{i}}_\infty C_1 r_1^t,
\]
where $C_1>0$ and $r_1 \in (0,1)$ are two constant that do not depend on $\mu$.
\end{enumerate}
\end{prop}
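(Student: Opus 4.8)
The plan is to treat the two statements separately, since they concern different aspects of the convergence to stationarity. For part (1), I would decompose the difference $\lamb{\thetastar,i}{}(t) - \olamb{\thetastar,i}{}(t)$ into a baseline term and a memory term:
\[
\lamb{\thetastar,i}{}(t) - \olamb{\thetastar,i}{}(t)
= \underbrace{\bastar{\mu}{i}(X(t^-)) - \bastar{\mu}{i}(\ov{X}(t^-))}_{\text{(A): baseline}}
+ \underbrace{\sum_{j=1}^d\Bigl(\int_{(0,t)}\!\hstar{ij}(t-s)\,d\N{j}{}(s) - \int_{(-\infty,t)}\!\hstar{ij}(t-s)\,d\oN{j}{}(s)\Bigr)}_{\text{(B): memory}}.
\]
For term (B), the key observation is that the stationary Hawkes process $\oN{}{}$ and the Hawkes process $N$ started at $0$ can be coupled so that they coincide after a (short) random regeneration time, since the exponential kernel has light memory; the contribution of the distant past $(-\infty,0)$ decays like $e^{-\un{\beta} t}$ because $\hstar{ij}(t-s)=\alphastar_{ij}e^{-\betastar_{ij}(t-s)}$ with $\betastar_{ij}\ge\un{\beta}>0$. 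This is exactly the kind of exponential forgetting established in the classical Hawkes stability theory of \cite{bremaud1996stability}, and in expectation it yields a bound of order $e^{-ct}$ after taking first moments (which are finite by Proposition~\ref{prop:bounded_moment_general_lambda} applied with the bounded baseline from Assumption~\ref{ass:bounded_baseline2}).

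For term (A) — and this is essentially the whole content of part (2) — I would bound $|\bastar{\mu}{i}(X(t))-\bastar{\mu}{i}(\ov X(t))|$ by coupling $X$ (started from $\pi_0$) with its stationary version $\ov X$ (started from $\pi_X$) and invoking the exponential ergodicity of $X$ guaranteed under Assumption~\ref{ass:ergodicitySDE}. Concretely, since $\bastar{\mu}{i}$ is bounded by Assumption~\ref{ass:bounded_baseline2}, the difference is dominated by $2\norm{\bastar{\mu}{i}}_\infty$, so I would write
\[
\bb{E}\bigl[|\bastar{\mu}{i}(X(t))-\bastar{\mu}{i}(\ov X(t))|\bigr]
\le 2\norm{\bastar{\mu}{i}}_\infty \,\norm{P_t^X(x,\cdot)-\pi_X}_{\mathrm{TV}},
\]
and then control the total-variation (or $f$-norm) distance by Theorem~\ref{theo:exp_ergodicity} restricted to the $X$-marginal, which gives the rate $r_1^t$ with $r_1\in(0,1)$. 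The hypothesis $\int V_1(x)\,\pi_0(dx)<\infty$ is precisely what is needed for the prefactor $B\!\int V_1\,d\pi_0$ to be finite and, crucially, to be \emph{independent of $\mu$}, so that $C_1$ and $r_1$ do not depend on $\mu$; the $\mu$-independence of the rate follows because the ergodic constants for $X$ concern only the diffusion, not the baseline functional, while the $\mu$-dependence is entirely absorbed into $\norm{\bastar{\mu}{i}}_\infty$.

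The main obstacle I anticipate is making the coupling in term (B) genuinely quantitative and uniform: one must verify that, beyond the deterministic decay of the kernel's tail, the difference between the two (non-stationary vs.\ stationary) event histories itself contributes only an exponentially small error in expectation. This requires propagating the exponential forgetting through the self-exciting feedback loop, i.e.\ controlling how an initial discrepancy in the counting processes is damped by the stability condition $\rho(K)<1$ of Assumption~\ref{ass:sta_hawkes}. I would handle this via the cluster (branching) representation of the Hawkes process, bounding the expected number of descendants born from ancestors in $(-\infty,0]$ whose influence survives up to time $t$; the subcriticality $\rho(K)<1$ ensures the expected cluster sizes are summable, and the exponential kernel ensures their temporal footprint decays geometrically. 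Combining the bounds for (A) and (B) and taking $c=\min(\text{rate of (A)},\text{rate of (B)})$ then yields part (1), while part (2) is simply the isolated statement for the baseline term.
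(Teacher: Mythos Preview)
Your approach for part~(2) and for the baseline term~(A) is essentially identical to the paper's: bound the difference by the total-variation distance between the law of $X(t)$ and $\pi_X$, then invoke the exponential ergodicity of $X$ from Assumption~\ref{ass:ergodicitySDE}, with the integrability of $V_1$ under $\pi_0$ controlling the prefactor.

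For the memory term~(B) in part~(1), however, the paper takes a more direct route than your cluster-representation proposal. Rather than a coupling that makes $N$ and $\oN{}{}$ eventually coincide (they never do, since their baselines $\bastar{\mu}{i}(X(\cdot))$ and $\bastar{\mu}{i}(\ov X(\cdot))$ differ for all time), the paper sets $f(t):=\bb{E}\big[\|\lamb{\thetastar}{}(t)-\olamb{\thetastar}{}(t)\|\big]$ and exploits the thinning construction: under the common Poisson random measure, the expected variation of $|dN_j-d\oN{j}{}|$ on $[s,s+ds]$ is bounded by $\bb{E}\big[|\lamb{\thetastar,j}{}(s)-\olamb{\thetastar,j}{}(s)|\big]\,ds=f_j(s)\,ds$. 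This yields the self-referential convolution inequality
\[
f(t)\;\le\;(h_{\thetastar}\star f)(t)+r(t),
\]
where $r(t)$ collects the baseline discrepancy~(A) and the exponentially small tail $\sum_j\int_{-\infty}^0 \alphastar_{ij} e^{-\betastar_{ij}(t-s)}\bb{E}[\olamb{\thetastar,j}{}(s)]\,ds$. Iterating this Volterra inequality (as in Proposition~4.4 of \cite{clinet2017statistical}) and using $\rho(K)<1$ gives $f(t)\le Ce^{-ct}$ directly, without any branching machinery. Your cluster approach can be made to work, but it requires extra care precisely because the immigrant streams for $N$ and $\oN{}{}$ are themselves different (driven by $X$ versus $\ov X$), so the clusters do not cleanly split into ``common'' and ``ancestral'' pieces as they would with a constant baseline; the paper's Gronwall-type inequality sidesteps this by absorbing the baseline discrepancy into $r(t)$ and letting the self-reference in $f$ handle the feedback.
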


\begin{proof}

\underline{Let us start by the first point stated.} 

\vspace{\baselineskip}

Recall that $\olamb{\thetastar,i}{}$ is given in Equation \eqref{eq:stationary_intensity}. Then, 
\begin{align*}
      \bb{E} \qty(\vert \lamb{\thetastar,i}{}(t) - \olamb{\thetastar,i}{}(t) \vert)  & \leq \bb{E}\qty(\vert \bas{\mustar}{i}(X(t)) - \bastar{\mu}{i} (\ov{X}(t))\vert) \\
    & \quad \quad   + \bb{E}\qty(\sum_{j=1}^d \int_{(0, t)} \alphastar_{ij} e^{-\betastar_{ij} (t-s)}\vert d\ov{N}_j(s)- d\N{j}{}(s)\vert) \\
    &\quad \quad+ \bb{E}\qty(\sum_{j=1}^d \int_{(-\infty, 0)} \alphastar_{ij} e^{-\betastar_{ij} (t-s)}d\ov{N}_j(s))
\end{align*}
Let us define 
\[
f\qty(t):= \bb{E} \left[ \norm{ \lamb{\thetastar}{}(t) - \olamb{\thetastar}{}(t) } \right] \in \bb{R}^d
\]
and 
\[r(t):= \Bigg(  \bb{E}\bigg(\Big| \bas{\mustar}{i}(X(t)) - \bas{\mustar}{i} (\ov{X}(t))\Big| \bigg) + \sum_{j=1}^d \int_{(-\infty, 0)} \alphastar_{ij} e^{-\betastar_{ij}(t-s)} \bb{E} \qty(\olamb{\thetastar,j}{}(s) ) \, ds \Bigg)_{1 \leq i \leq n} \in \bb{R}^d \]

then the previous inequality becomes (where the inequality is vectorial):
\[
f\qty(t) \leq (f \star \alphastar e^{-\betastar \cdot})(t) + r(t),
\]
where \(\star\) denotes the matrix convolution, defined for \(U \in \bb{R}^{d \times d} \) and \(V \in \bb{R}^d \) by
\[
(U \star V)_i(t):= \sum_{j=1}^d \int_{(0, t)} U_{ij}(t-s) V_j(s) \, ds.
\]
This yields the same type of inequality as in the classical multivariate Hawkes process setting. By following the reasoning from the proof of Proposition 4.4 in \cite{clinet2017statistical}, we thus obtain that there exist $c,C>0$ such that: 
\[ 
f(t) =\bb{E} \left[ \norm{ \lamb{\thetastar}{}(t) - \olamb{\thetastar}{}(t) } \right] \leq C e^{-ct }.
\]
This concludes the first part of the proof. 

\vspace{\baselineskip}

\underline{Let us turn to the second point.}

\vspace{\baselineskip}

We know that 
\[ \bb{E}\qty(\vert \bas{\mu}{i}(X(t)) - \bas{\mu}{i} (\ov{X}(t))\vert) \leq  \norm{\bas{\mu}{i}}_{\infty}\norm{ \mathcal{L}(X(t)) - \pi_X}_{TV}  = \norm{\bas{\mu}{i}}_{\infty} \norm{\pi_0 P_t^X - \pi_X } \] 
where $\pi_0$ is the law of $X(0)$. And thanks to Assumption \ref{ass:ergodicitySDE}, there exist $\Tilde{C}>0$ and $0< r_1<1$ such that for all $x$, $\norm{P^X_t(x,.) - \pi_X(.)}_{TV} \leq \Tilde{C} V_1(x) r_1^t$.
As a result, \[ \bb{E}\qty(\vert \bas{\mu}{i}(X(t)) - \bas{\mu}{i} (\ov{X}(t))\vert) \leq \norm{\bas{\mu}{i}}_{\infty} \int \norm{P_t^X(x,.) - \pi_X(.)}_{TV} \pi_0(dx)  \leq \norm{\bas{\mu}{i}}_{\infty}   \qty( \int V_1(x) \pi_0(dx) )\Tilde{C} r_1^{t}.\]
Denoting $C_1:= \widetilde{C} \int V_1(x) \pi_0(dx) < \infty $, the previous inequality concludes the proof.
\end{proof}

\subsection{Proof of Proposition \ref{prop:stationary_intensity}}
\label{proof:prop_stationary_intensity}
The proof relies on Proposition \ref{lemma:speed_cv_stationnary}, which shows the first point stated in Proposition \ref{prop:stationary_intensity}. 
The second point is a direct consequence of the fact that:
\begin{align*}
    \bb{E} \left[ \norm{ \lamb{\theta,i}{}(t) - \olamb{\theta,i}{}(t) } \right] &  \leq \bb{E}\qty(\vert \bas{\mu}{i}(X(t)) - \bas{\mu}{i} (\ov{X}(t))\vert) \\
    & + \sum_{j=1}^d \int_{0}^{t^-} \alpha_{ij} e^{-\beta_{ij} (t-s)}\bb{E}\qty(\vert \lamb{\thetastar}{}(s) - \olamb{\thetastar}{}(s)\vert )ds+ \int_{-\infty}^0 \alpha_{ij} e^{-\beta_{ij} (t-s)}\bb{E}\qty(\olamb{\thetastar,j}{}) ds,
\end{align*}

\subsection{Proof of Theorem \ref{theo:exp_ergodicity}}
As mentioned in the main text, establishing the ergodicity of $Z$ requires only general Markovian properties for $X$. For this reason, we consider a general diffusion process satisfying the conditions stated in Remark~\ref{rem:general_ergodicity_condition}. However, as already noted earlier, these conditions indeed imply those of Assumption~\ref{ass:ergodicitySDE}.

We rely on Theorem~6.1 of \cite{meyn1993stability}, which establishes that, under the $T$-chain property and a suitable Lyapunov function $V$, a Markov process is geometrically ergodic.  
In particular, it ensures that under those conditions, there exist constants \(C > 0\) and \( r \in (0,1)\) such that
\[
\| P^Z_T(z, \cdot) - \pi \|_{\mathrm{TV}} \leq C V(z) \, r^T, \quad
\]
where \(P^Z_t(x, \cdot)\) denotes the transition kernel of the process and \(\pi\) its unique invariant distribution. In the following, we verify that both the Lyapunov condition and the $T$-chain property hold for our process.

\subsubsection{The Lyapunov condition}
\label{proof:LyapunovCondition}

The first condition is the existence of a Lyapunov function that verifies the following bound:
\begin{equation}
\label{eq:Lyapunov}
    \s{A}_ZV(x,y) \leq -c V(x,y) + d \indic_{\s{K}} \text{  with $c,d$ strictly positive constants and }\s{K} \text{ a compact set }
\end{equation}
where the infinitesimal generator is defined, when the limit exists, by
\[
A_Z f(z) = \lim_{t \to 0^+} \frac{\mathbb{E}_z\!\left[f(Z(t))\right] - f(z)}{t}.
\]

\paragraph{Infinitesimal generator computation}
We begin by computing the infinitesimal generator $ \s{A}_Z$ before explaining the form of the function $V$.

Let $f: \bb{R}^m \times \bb{R}^{d^2} \to \bb{R}$ be a function of class \(C^2 \). Applying Itô's formula 
(see for example \cite{protter2005stochastic, applebaum2009levy} or  \cite[Chapter II, Theorem 5.1]{ikeda2014stochastic}), we obtain:

\begin{align*}
& f\qty(X(t), Y(t)) - f\qty(x, y) \\
& \quad = \sum_{i=1}^m \int_0^t \nabla_{x_i} f\qty(X(s^{-}), Y(s^{-})) \, dX_i(s) 
+ \frac{1}{2}\sum_{i,j=1}^m \int_0^t \nabla_{x_i x_j}^2 f\qty(X(s^{-}), Y(s^{-})) \, d[X_i,X_j](s) \\
&\quad + \sum_{i,j=1}^d \int_0^t \nabla_{y_{ij}} f\qty(X(s^{-}), Y(s^{-})) \, dY_{ij}(s) \\
&\quad + \sum_{j=1}^d \int_0^t \qty(f\qty(X(s), Y(s)) - f\qty(X(s^{-}), Y(s^{-})) 
- \sum_{i=1}^d \Delta Y_{ij}(s) \, \nabla_{y_{ij}} f\qty(X(s^{-}), Y(s^{-}))) d\N{j}{}(s).
\end{align*}

Taking expectations and using the dynamics of \((X(t), Y(t)) \) given in Equation~\eqref{eq:markov_eq}, we get:
\begin{align*}
& \bb{E}_{x,y}[f\qty(X(t), Y(t)) - f\qty(X(0), Y(0))]  \\
& \quad = \bb{E}_{x,y} \left[ \int_0^t \nabla_x f\qty(X(s^{-}), Y(s^{-}))^T b(X(s^{-})) ds  + \frac{1}{2} tr\qty[ (\sigma \sigma^T)(X(s^{-} ) \nabla^2_x f\qty(X(s^{-}), Y(s^{-})) ] ds \right] \\
&\quad + \bb{E}_{x,y} \left[ \sum_{i,j=1}^d \int_0^t \qty(
-\betastar_{ij} Y_{ij}(s^{-}) \, \nabla_{y_{ij}} f\qty(X(s^{-}), Y(s^{-}))) ds \right] \\
&\quad + \bb{E}_{x,y} \left[ \sum_{j=1}^d \int_0^t \qty(
f\qty(X(s^{-}), Y(s^{-}) + A_j) - f\qty(X(s^{-}), Y(s^{-}))) d\N{j}{}(s) \right],
\end{align*}
where \( A_j  \in \bb{R}^{d \times d} \) is the matrix whose only nonzero entries are in the \(j \)-th column, given by the vector \((\alphastar_{ij})_i \).
Then, 
using the fact that \(\bb{E}[h(s) \, d\N{j}{}(s)] = \bb{E}[h(s) \, \lamb{j}{\star}(s) \, ds] \), with 
\[
\lamb{k}{\star}(s) = \bastar{\mu}{k}(X(s)) + \sum_{j=1}^d Y_{kj}(s),
\]
we obtain:
\begin{align*}
& \bb{E}_{x,y}[f\qty(X(t), Y(t)) - f\qty(X(0), Y(0))] \\
&\quad = \bb{E}_{x,y} \left[ \int_0^t \nabla_x f\qty(X(s^{-}), Y(s^{-}))^T b(X(s^{-})) ds  + \frac{1}{2} tr\qty[ (\sigma \sigma^T)(X(s^{-} )) \nabla^2_x f\qty(X(s^{-}), Y(s^{-})) ] ds \right] \\
&\quad + \bb{E}_{x,y} \left[ \sum_{i,j=1}^d \int_0^t \qty(-\betastar_{ij} Y_{ij}(s^{-}) \, \nabla_{y_{ij}} f\qty(X(s^{-}), Y(s^{-}))) ds \right] \\
&\quad + \bb{E}_{x,y} \left[ \sum_{j=1}^d \int_0^t \qty(
f\qty(X(s^{-}), Y(s^{-}) + A_j) - f\qty(X(s^{-}), Y(s^{-}))) \qty(\bastar{\mu}{j}(X(s^{-})) + \sum_{i=1}^d Y_{ji}(s^{-})) ds \right].
\end{align*}
This gives the infinitesimal generator \(\mathcal{A}_Z \) of the process \(Z(t) = (X(t), Y(t)) \), by
\begin{eqnarray*}
\s{A}_Z f\qty(x,y)   &=&   \nabla_x f\qty(x,y)^T b(x) + \frac{1}{2} tr\qty[ (\sigma \sigma^T)(x ) \nabla^2_x f\qty(x, y)) ] \\
    & &  +  \sum_{j=1}^{d} (f\qty(x,y + A_j) - f\qty(x,y))\qty(\bastar{\mu}{j}(x) + \sum_{i=1}^{d} y_{ji}) - \sum_{i,j=1}^d \betastar_{ij} \nabla_{y_{ij}}f\qty(x,y) y_{ij}.
\end{eqnarray*}

\paragraph{Lyapunov condition} 

Denoting 
\[ \langle A, B \rangle:=\sum_{i,j} a_{ij} b_{ij} \text{ for all } A,B \in \bb{R}^{d^2}, \]
we define the function \(V_2: \bb{R}^{d^2} \to \bb{R}_+ \) by
\begin{equation*}
V_2(y):= e^{\langle M, y \rangle},    
\end{equation*}
 where \(M = (m_{ij}) \in \bb{R}^{d \times d} \) is a matrix of real parameters to be chosen later.  
We then consider the candidate Lyapunov function \(V: \bb{R} \times \bb{R}^{d^2} \to \bb{R}_+ \) defined by
\[
V(x, y):= V_1(x) + V_2(y),
\]
where \(V_1: \bb{R}^m \to \bb{R}_+ \) is the function given by in Equation \eqref{eq:lyapunov_function} in the context of Assumption~\ref{ass:ergodicitySDE} or by Remark \ref{rem:general_ergodicity_condition} in the general Markovian context. Thus we have:

\begin{align*}
    \s{A}_Z V(x,y) & = \s{A}_X V_1(x) + \sum_{j=1}^d e^{\scalar{M}{y}} \qty(e^{\scalar{M}{A_j}} -1)\qty(\bastar{\mu}{j}(x) + \sum_{i=1}^{d} y_{ji}) - e^{\scalar{M}{y}}\sum_{i,j} \betastar_{ij} y_{ij} m_{ij}  \\
    & \leq  \s{A}_X V_1(x) + \sum_{j=1}^d e^{\scalar{M}{y}} \qty(e^{\scalar{M}{A_j}} -1)\qty(\bas{\mustar}{+} + \sum_{i=1}^{d} y_{ji}) - e^{\scalar{M}{y}}\sum_{i,j} \betastar_{ij} y_{ij} m_{ij}
\end{align*}

where we used the upper bound of Assumption \ref{ass:bounded_baseline2}

The first term corresponds to the generator of the diffusion \(X \), applied to \(V_1 \). Furthermore, there exist constants \(c_1 > 0 \) and \(d_1 > 0 \) such that
\[
\mathcal{A}_X V_1(x) \leq -c_1 V_1(x) + d_1 \indic_{K_1}(x).
\]
where $K_1$ is given by in Equation \eqref{eq:lyap_condition} in the context of Assumption~\ref{ass:ergodicitySDE} or by Remark \ref{rem:general_ergodicity_condition} in the general Markovian context.
The second term, namely 

\[A_2 V(x,y):= \s{A}_Z V(x,y) - \mathcal{A}_X V_1(x):= \sum_{j=1}^d e^{\scalar{M}{y}} \qty(e^{\scalar{M}{A_j}} -1)\qty(\bas{\mustar}{+} + \sum_{i=1}^{d} y_{ji}) - e^{\scalar{M}{y}}\sum_{i,j} \betastar_{ij} y_{ij} m_{ij},\]
and corresponds to the generator acting on \(V_1 \) for a multidimensional Hawkes process with constant baseline intensity \(\bas{\mustar}{+} \). Following arguments from Proposition 4.5 of \cite{clinet2017statistical}, which establishes a Lyapunov property for a multidimensional Hawkes process with exponential kernel and constant baseline, there exist constant \(c_2 > 0 \), \(d_2 > 0 \), and a compact set \(K_2 \subset \bb{R}^{d^2} \) such that
\[
\mathcal{A}_2 V(x, y) \leq -c_2 V_2(y) + d_2 \indic_{K_2}(y).
\]

Combining both inequalities concludes the proof: 
\[
\mathcal{A}_Z V(x, y) \leq - c V(x,y) + d \indic_{K_1\times K_2}(x,y).
\]

\subsubsection{T-chain property of the transition kernel}
\label{proof:Tchaindim2}
It is generally acknowledged that this type of proof is tedious and difficult to follow, particularly due to the large number of indices to manipulate. For clarity and readability, we therefore present the proof in the case of a Hawkes process of two components, that is, $d=2$. The proof for an arbitrary dimension $d \geq 1$ proceeds in the exact same way, and can be found in \ref{proof:Tchaingeneraldim}.

In the following we thus consider a bidimensional Hawkes process having baseline functions $\bastar{\mu}{1}$ and $\bastar{\mu}{2}$ with an interaction matrix $\alphastar = (\alphastar_{ij})_{1 \leq i,j \leq 2}$ and a decay matrix $\betastar = (\betastar_{ij})_{1 \leq i,j \leq 2}$

We want to show that the transition kernel of $Z$ verifies
\[
P^{Z}_T\qty((x,y), A \times B) \geq Q_T((x,y), A \times B),
\]
with $Q_T$ a nontrivial kernel such that $Q_T(., A\times B)$ is lower semi-continuous and for any subset $A\times B$ of $  \bb{R}^m \times \bb{R}_+^{d^2}$.
The main idea is to treat separately the component of the intensity that depends on the process \(Y \), by first conditioning on the trajectory of \(W(t) \) allowing us to consider \(X(t) \) as non-random.

\begin{equation}
\label{eq:lower_bound_kernel}    
\begin{split}
    P^Z_T((x,y), A \times B) 
    &= \bb{E}_{(x,y)}\qty(\indic_A(X(T))  \indic_B(Y(T))) \\
    &= \bb{E}_{(x,y)} \qty(\indic_A(X(T))  \bb{E}_{(x,y)}\qty(\indic_B(Y(T))\vert \s{F}_W^T)) 
\end{split}
\end{equation}
where $\mathcal{F}^T_W$ denoted the canonical filtration of the Brownian motion.

We thus start working on  
\(\bb{E}_{(x,y)}\big(\indic_B(Y(T)) \,\vert\, \mathcal{F}^T_W \big)\). 
In this setting, the deterministic part of the intensity, given by \(\bastar{\mu}{i}(X(t))\), 
can be viewed as a bounded function of time. Using this boundedness, we recover a setting similar 
to a standard Hawkes process with constant baseline intensity, which allows us to apply 
existing results from the classical Hawkes process framework.

We employ techniques already used in \cite{clinet2017statistical} (proof of Lemma A.3) and \cite{dion2020exponential}, 
relying on the fact that the intensity is piecewise deterministic. Fixing the number of jumps that occurred before \(T\), the value of \(Y(T)\) becomes a deterministic function depending only on the initial condition and the jump times (see Equation~\eqref{eq:markov_eq}), which simplifies the computation of the expectation.

We thus work on the event
\[
A_T:= \{ T_1 < T_2 < T_3 < T_4 < T < T_5 \} \cap \{ K_1 = 1, K_2 = 1, K_3 = 2, K_4 = 2 \}.
\]

In other words, we require that each component jumps \(d=2\) times before time \(T\), 
and that the jumps of component 1 occur before those of component 2.
Then, we have
\begin{align*}
    \bb{E}_{(x,y)}\big(\indic_B(Y(T)) \,\vert\, \mathcal{F}^T_W \big) 
    &\geq \bb{E}_{(x,y)}\big(\indic_B(Y(T)) \indic_{A_T} \,\vert\, \mathcal{F}^T_W \big),
\end{align*}
and
\begin{align*}
    &\bb{E}_{(x,y)}\big(\indic_B(Y(T)) \indic_{A_T} \,\vert\, \mathcal{F}^T_W \big) \\
   & = \int f^{ (\Delta T_{5},t K_{5}, \dots, \Delta T_{1}, K_{1})}(t_5, k_5, \dots, t_1, k_1) 
    \, \indic_{B}(\gamma_y(T)) \indic_{t_1+\ldots +t_4< T } \indic_{t_1+\ldots +t_5 > T }  \, dt_1 \dots dt_5,
\end{align*}

where:
\begin{itemize}
    \item \(f^{ (\Delta T_{1}, K_{1}, \dots, \Delta T_{5}, K_{5})}(t_5, k_5, \dots, t_1, k_1)\) 
    is the joint density of the inter-arrival times $\Delta T_i:=T_i-T_{i-1}$ and the marks \((K_i)\), conditional on the trajectory of \(W\);
    \item \(\gamma_y(T) \in \bb{R}^{d^2}\) denotes the value of \(Y(T)\), conditional on the event
    \(\{ Y(0) = y \} \cap \{ \Delta T_1 = t_1, K_1 = 1, \dots, \Delta T_4 = t_4, K_4 = 2 \}\) and conditional to $W$.
\end{itemize}

\paragraph{Computation of $\gamma_y(T)$}
Using the recursive expression of \(Y\), we can write \(\gamma_y(t)\) on each interval 
\([t_1 + \dots + t_{i-1},\, t_1 + \dots + t_i]\) as a piecewise-deterministic function that depends on the initial condition \(y\) and the jump times: 

\[ 
\begin{cases}

\begin{pmatrix}
y_{11} e^{-\betastar_{11} t} & y_{12} e^{-\betastar_{12} t} \\
y_{21} e^{-\betastar_{21} t} & y_{22} e^{-\betastar_{22} t}
\end{pmatrix}, \quad t \leq t_1 \\[0.8em]
\begin{pmatrix}
y_{11} e^{-\betastar_{11} t} + \alphastar_{11} e^{-\betastar_{11}(t-t_1)} & y_{12} e^{-\betastar_{12} t} \\
y_{21} e^{-\betastar_{21} t} + \alphastar_{21} e^{-\betastar_{21}(t-t_1)} & y_{22} e^{-\betastar_{22} t}
\end{pmatrix}, \quad  t_1 < t \leq t_1+t_2 
\end{cases}
\]
and so on, and for $ \sum_{i=1}^4 t_i < t\leq T$,
$$
\begin{pmatrix}
y_{11} e^{-\betastar_{11} t} + \alphastar_{11}\qty(e^{-\betastar_{11}(t-t_1)} + e^{-\betastar_{11}(t-t_1-t_2)}) & 
y_{12} e^{-\betastar_{12} t} + \alphastar_{12}\qty(e^{-\betastar_{12}(t-\sum_{i=1}^3 t_i)} + e^{-\betastar_{12}(t-\sum_{i=1}^4 t_i)}) \\
y_{21} e^{-\betastar_{21} t} + \alphastar_{21}\qty(e^{-\betastar_{21}(t-t_1)} + e^{-\betastar_{21}(t-t_1-t_2)}) & 
y_{22} e^{-\betastar_{22} t} + \alphastar_{22}\qty(e^{-\betastar_{22}(t-\sum_{i=1}^3 t_i)} + e^{-\betastar_{22}(t-\sum_{i=1}^4 t_i)})
\end{pmatrix}
$$
In a more compact form, these equations can be written
\begin{equation}
\label{eq:kernel_val}
\gamma_y(T) = \qty(y_{ij} e^{-\betastar_{ij} T} + \alphastar_{ij} 
\sum_{i=2(j-1)+1}^{2j} e^{-\betastar_{ij} (T - t_1 - \dots - t_i)})_{1 \leq i,j\leq 2}.
\end{equation}
We denote by $(\gamma_y^{kj}(t))_{1 \leq k,j \leq 2}$ the entries of the matrix $\gamma_y(t)$, and define, for each $k$,
\[
\gamma_y^k(s):= \gamma_y^{k1}(s) + \gamma_y^{k2}(s),
\]
which represents the total contribution to component $k$ obtained by summing over the two columns of $\gamma_y(s)$.

\paragraph{Computation of the joint density \(f \)}

We aim to make explicit the joint density \(f \) of the random vector \((\Delta T_{5}, K_{5}, \dots, \Delta T_1, K_1)\). Using Bayes' formula, we have:

\begin{equation}    
\label{eq:JointDensity}
f^{(\Delta T_{5}, K_{5}, \dots, \Delta T_{1}, K_{1})}
= \prod_{k=0}^{4} \; \prod_{i=2k+1}^{2(k+1)} 
   f^{\qty(\Delta T_i, K_i \mid \Delta T_{<i}, K_{<i})},
\end{equation}

where the notation $f^{\qty(\Delta T_i, K_i \mid \Delta T_{<i}, K_{<i})}$ corresponds to the conditional density of the pair $(\Delta T_i, K_i)$ given the previous history.

To compute the conditional terms \(f^{\qty(\Delta T_i, K_i \mid \Delta T_{<i}, K_{<i})} \), we first consider the survival function of the \(i\)-th inter-arrival time given the past, and then derive the corresponding density from it. 
Working on the event \(A_T\), for  \(i \in \llbracket 1,2 \rrbracket\), the condition \(\Delta T_i > t_i\) is equivalent to 
\(
N_1([T_{i-1}, T_{i-1} + t_i]) = 0,
\) 
meaning that between two jump times, the process behaves as an inhomogeneous Poisson process with intensity 
\(
s \mapsto \bastar{\mu}{1}(X(s)) + \sum_{j=1}^2 \gamma_y^{1j}(s) = \bastar{\mu}{1}(X(s)) + \gamma_y^{1}(s).
\) 
Hence, we have

\begin{align*}
\mathbb{P}(\Delta T_i > t_i \mid \Delta T_{i-1}, \dots, \Delta T_1, K_1, \mathcal{F}_W^T) 
&= \mathbb{P}\big(N_1([T_{i-1}, T_{i-1} + t_i]) = 0 \mid \Delta T_{i-1}, \dots, \Delta T_1, K_1, \mathcal{F}_W^T \big) \\
&= \exp\Bigg(- \int_{T_{i-1}}^{T_{i-1} + t_i} \qty[ \bastar{\mu}{1}(X(s)) + \gamma_y^{1}(s) ] \, ds \Bigg).
\end{align*}

Similarly, for \(i \in \{3,4\}\), we have
\begin{align*}
\mathbb{P}(\Delta T_i > t_i \mid \Delta T_{i-1}, \dots, \Delta T_1, K_1, \mathcal{F}_W^T) 
= \exp\Bigg(- \int_{T_{i-1}}^{T_{i-1} + t_i} \qty[ \bastar{\mu}{2}(X(s)) + \gamma_y^{2}(s) ] \, ds \Bigg).
\end{align*}

Once again for $i=5$ we have 
\begin{align*}
\mathbb{P}(\Delta T_5 > t_5 \mid \Delta T_{4}, \dots, \Delta T_1, K_1, \mathcal{F}_W^T) 
= \exp\Bigg(- \int_{T_{4}}^{T_{4} + t_5} \qty[ \bastar{\mu}{2}(X(s)) + \gamma_y^{k_5}(s) ] \, ds \Bigg).
\end{align*}
where $k_5$ is the component that jumps at time $T_5$.
Hence, for all \(i\), the conditional density of \(\Delta T_i\) is given by
\[
t \mapsto \Bigl(\bastar{\mu}{k}(X(T_{i-1}+t)) + \gamma_y^k(T_{i-1}+t) \Bigr) 
\exp\Bigg(- \int_{T_{i-1}}^{T_{i-1}+t} \qty[ \bastar{\mu}{k}(X(s)) + \gamma_y^k(s) ] ds \Bigg),
\]
where $k$ is the component that jump at time $T_k$: \(k=1\) if \(i \in \{1,2\}\) and \(k=2\) if \(i \in \{3,4\}\). For $i=5$, note that $k_5$ plays no role as we have no condition imposed on it.

For the conditional mark density, given \(\Delta T_i, \dots, \Delta T_1, K_1, \mathcal{F}_W^T \), the probability that the event occurs in component \(k \) is:
\begin{eqnarray*}
\bb{P}(K_i = k \mid \Delta T_i, \dots, \Delta T_1, K_1, \mathcal{F}_W^T) &=& \frac{\lamb{k}{}(T_i^-)}{\lamb{1}{}(T_i^-) + \lamb{2}{}(T_i^-)} \\
&=& \frac{\bastar{\mu}{k}(X(T_i^-)) + \gamma_y^k(T_i^-)}{ \bastar{\mu}{1}(X(T_i^-))+ \gamma_y^1(T_i^-) + \bastar{\mu}{2}(X(T_i^-)) +\gamma_y^2(T_i^-) }.
\end{eqnarray*}
We can then write the full conditional density as:
\begin{align*}
f^{\qty(\Delta T_i, K_i \mid \Delta T_{<i}, K_{<i})}(t_i,k_i..,t_1,k_1) &= 
\frac{\bastar{\mu}{k_i}(X(T_i^-)) + \gamma_y^{k_i}(T_i^-)}{ \bastar{\mu}{1}+ \gamma_y^{1}(T_i^-) + \bastar{\mu}{2}(X(T_i^-)) +\gamma_y^2(T_i^-) } \\
&\quad \times 
\qty(\bastar{\mu}{k_i}(X(T_{i-1} + t_i)) + \gamma_y^{k_i}(T_{i-1} + t_i)) \\
& \times 
\exp\qty(- \int_{T_{i-1}}^{T_{i-1} + t_i} 
\bastar{\mu}{k_i}(X(s)) + \gamma_y^{k_i}(s) \, ds),
\end{align*}
where we recall that $k_i$ is equal to $1$ if $i\in \{1,2\} $ and to $2$ if $i\in \{3,4\} $, and is arbitrary for $i=5$.

Now, using the fact that the baseline intensities \(\qty(\bastar{\mu}{k}(X(t)))_k \) are bounded above and below (Assumption \ref{ass:bounded_baseline2}), we can deduce a lower bound:
\begin{align*}
f^{\qty(\Delta T_i, K_i \mid \Delta T_{<i}, K_{<i})}(t_i,k_i..,t_1,k_1) & \geq 
\frac{ \bas{\mustar}{-}+ \gamma_y^k(T_i^-)}{ \bas{\mustar}{+}+ \gamma_y^1(T_i^-) +\bas{\mustar}{+} +\gamma_y^2(T_i^-) } \\
&\quad \times 
\qty(\bas{\mustar}{-}+ \gamma_y^{k}(T_{i-1} + t_i))  \times 
\exp\qty(- \int_{T_{i-1}}^{T_{i-1} + t_i} 
\bas{\mustar}{+} + \gamma_y^{k_i}(s) \, ds).
\end{align*}
Using Equation~\eqref{eq:JointDensity}, we obtain an expression that is very similar 
to the one derived for a Hawkes process with constant baseline (see the third equation in the proof of 
Lemma A.4 in \cite{clinet2017statistical}). The only difference lies in the fact that, instead of having two 
constants \(\bas{\mustar}{+}\) and \(\bas{\mustar}{-}\), the reference only involves a single constant, namely the baseline rate of 
the Hawkes process (which in our setting would correspond to the case  \(\bastar{\mu}{+} = \bastar{\mu}{-}\). Nevertheless, 
the rest of the argument is unchanged, since it relies on a change of variables in 
\(\bb{R}^d\) associated with the mapping \((t_1,\dots,t_d) \mapsto \gamma_y(t)\), which remains 
valid in our framework.

Thus, the same arguments as in Lemma A.3 of \cite{clinet2017statistical} apply, showing that:

\[
Q_{2,T}(y,B):= \int_{\bb{R}^{d^2}} f^{ (\Delta T_{5}, K_{5}, \dots, \Delta T_{1}, K_{1})}(t_{5}, k_{5}, \dots, t_{1}, k_{1}) \, 
\indic_{B}(\gamma_y(T)) \, dt_1 \dots dt_{5}
\]
is a kernel that is non-trivial and lower semi-continuous.
Combining this with Equation~\eqref{eq:lower_bound_kernel} and Assumption~\ref{ass:bounded_baseline2}, we obtain:
\[
P^{Z}_{T}\qty((x,y), A\times B) \geq Q_T \qty((x,y), A\times B):= Q_{1,T}(x,A) Q_{2,T}(y,B)
\]
which proves that \(P \) is a \(T \)-chain.\\

It remains to show that the core \(Z = (X,Y)\) admits a reachable point. We already know that the process \(X\) admits such a point (see Assumption \ref{ass:ergodicitySDE}). Now, the Hawkes dynamics can be framed by the one of two Hawkes processes with a constant baseline (due to the bounds on \((\bastar{\mu}{k})_k \)) and we know that in these cases $0$ is reachable for the kernel. For this reason, $0$ is a reachable point for $Y$ too.

Therefore, \(Z \) is \(\psi \)-irreducible. Using Proposition 6.2.1 of \cite{meyn1993stability}, we conclude that every compact set is a petite set. We note that the notion of a petite set is quite specific to the theory of Markov processes, similar to that of $\psi$-irreducibility. For a precise definition, we refer the reader to \cite{meyn1993stability}.

\subsection{Proof of Theorem \ref{theo:LLN_CLT}}
\label{proof:LLN_CLT}
The proof of this theorems is largely inspired by the proof of Theorems~1 and~2 in \cite{bacry2013some}, which establish analogous results in the case of a constant baseline. 
The main difference here is that we also need to handle the convergence of the baseline process. 
Following their approach, we define
\[
M(t):=N(t)-\int_0^t\lamb{}{\star}(s)\,ds \in \bb{R}^d, 
\qquad 
V(t):=N(t)-m(t) \in \bb{R}^d, 
\qquad 
\text{with} \quad m(t):=\mathbb{E}\!\left[N(t)\right] \in \bb{R}^d.
\]
We also define for two function $F,H$ from $\bb{R}$ to $\bb{R}^{d^2}$, the convolution \[ F \star H (t) = \int_{0}^{t} F(t-s) H(s) ds\]
Then the following relations hold (where each equality are to be understood componentwise):
\begin{itemize}
    \item The function $m$ satisfies the following Volterra equation (see Chapter 4 of \cite{asmussen2003applied} for a deeper study of this equation):
    \[
    m(t)=\int_0^t \mathbb{E}\!\left[g(X(s))\right]\,ds+\int_0^t h_{\thetastar}(t-s)\,m(s)\,ds.
    \]
    \item The centered process $V$ satisfies the renewal-type equation:
    \begin{equation}\label{eq:renewal_again}
    V(t)=M(t)+\int_0^t h_{\thetastar}(t-s)\,V(s)\,ds, \qquad t\ge0.
    \end{equation}
    \item Since $\rho(K)<1$, the series
    \[
    \psi := \sum_{n\ge1} \left(h_{\thetastar}\right)^{\ast n}
    \]
    converges componentwise and belongs to $L^1(\mathbb{R}_+)$.
    By iterating Equation \eqref{eq:renewal_again}, we obtain the following (vectorial) representation:
    \begin{equation}
    \label{eq:U_renewal}
        V(t)
        =M(t)+\int_0^t \psi(t-s)\,M(s)\,ds.
    \end{equation}
\end{itemize}

It is worth noting that in this proof we retain the matrix notation $h_{\theta^*}$, which is general. This is intentional and emphasizes that the demonstration does not rely on the exponential form of the kernel.

\subsubsection{Proof of the LLN property.}

The proof is divided into two parts.
\begin{enumerate}
    \item We start by showing that \( \frac{1}{T}\qty(N(Tv)-m(Tv))=\frac{1}{T}V(Tv)\xrightarrow[T\to\infty]{}0
\quad\text{uniformly in }v\in[0,1],\)  in \(L^2\) and almost surely.
\item We then prove that \( \frac{1}{T}m(Tv)\xrightarrow[T\to\infty]{} v\,(I-K)^{-1}\bar\mu
\quad\text{uniformly in }v\in[0,1]. \)
\end{enumerate}


\noindent \underline{Let us start by the first point.} Namely, we show that
\[  \frac{1}{T}\qty(N(Tv)-m(Tv))=\frac{1}{T}V(Tv)\xrightarrow[T\to\infty]{}0
\quad\text{uniformly in }v\in[0,1] \text{ in } L^2 \text{ and } a.s \, . \]

\paragraph{Convergence in \(L^2\)}
Using Equation \eqref{eq:U_renewal}, we get the following upper bound,
\begin{eqnarray}
    \label{eq:upper_boundU}
    \norm{ \frac{1}{T}V(Tv) }^2 &\leq &\frac{1}{T^2} \sup_{v  \in [0,1] } \norm{M(vT)} \qty(  1 + \int_{0}^{vT} \norm{\psi(s)} ds ) \nonumber\\
    &\leq &  \frac{1}{T^2} \sup_{v  \in [0,1] } \norm{M(vT)} \qty(  1 + \norm{\psi(T)}_{L^1} )
\end{eqnarray}
Using Doob's inequality (with $p=2$) for the vector martingale \(M\), we have
\[
\bb{E}\qty(\sup_{v\in[0,1]}\big\|M(Tv)\big\|^2)
\le 4\,\bb{E}\qty(\|M(T)\|^2)
=4  \bb{E} \qty( \sum_{i=1}^d M_i(T)^2) = 4 \bb{E}\qty( \sum_{i=1}^d [M]_i(T)) = 4 \bb{E}\qty(\sum_{i=1}^d N_i(T)) 
\]
As a result, 
\[
\bb{E}\qty(\sup_{v\in[0,1]}\big\|M(Tv)\big\|^2) \leq 4 \bb{E}\qty( \sum_{i=1}^d \int_{0}^T \lamb{i}{\star}(t)) \leq 4 \sup_{t \ge 0} \bb{E}\qty(\norm{\lamb{}{\star}(t)}) \times T
\]
where $\bb{E}\qty(\norm{\lamb{}{\star}(t)})  < \infty$ thanks to Assumption \ref{ass:bounded_baseline2} and Proposition \ref{prop:bounded_moment_general_lambda}.
Hence,
\[
\frac{1}{T^2 }\bb{E}\qty(\sup_{v\in[0,1]}\big\|M(Tv)\big\|^2)
\leq  \frac{4  \sup_{t \ge 0} \bb{E}\qty(\norm{\lamb{}{\star}(t)}) }{T}\leq \frac{4}{T} \frac{\bb{E}_{\pi_X}\qty(\bastar{\mu}{i}(\ov{X}))}{1-\rho\qty(K)}\xrightarrow[T\to\infty]{}0  .
\]
This result, combined with Equation \eqref{eq:upper_boundU} implies that $\qty(\frac{1}{T}V(vT))_v$ converges to $0$ in $L^2$, uniformly in $v$.

\paragraph{Almost sure convergence}
Given Equation \eqref{eq:upper_boundU}, it is sufficient to prove 
\[
\frac{1}{T}\sup_{v\in[0,1]}\|M(vT)\| \xrightarrow[T \to \infty]{a.s} 0 .
\]

According to the previous calculation, for all \(T\ge1\),
\[
\frac{1}{T^2} \bb{E}\qty(\sup_{v\in[0,1]}\|M(vT)\|^2)\le \frac{1}{T} \frac{\bb{E}_{\pi_X}\qty(\norm{g_{\mustar}(\ov{X}(t))}_\infty)}{1-\rho\qty(K)} .
\]
Therefore, by Markov's inequality,
\[
\mathbb{P}\qty(\frac{1}{T}\sup_{v\in[0,1] }\|M(vT)\|>\varepsilon)
\le \frac{\bb{E}(\sup_v\|M(vT)\|^2)} {\varepsilon^2 T^2 }\le \frac{1}{\varepsilon^2 T} \frac{\bb{E}_{\pi_X}\qty(\norm{g_{\mustar}(\ov{X}(t))}_\infty)}{1-\rho\qty(K)} .
\]
If we choose the subsequence \(T_n=2^n\), then \(\sum_n \mathbb{P}( T_n \sup_{v \in [0,1]} \|M(vT_n)\|>\varepsilon) <\infty\), hence by Borel–Cantelli \(\sup_v\|M(T_n v)\|\to0\) almost surely. For any \(T\) with \(2^n \le T \le 2^{n+1}\), we have
\[
\frac{1}{T}\sup_{v\in[0,1]}\|M(Tv)\|
=\frac{1}{T}\sup_{s\le T}\|M(s)\|
\le \frac{1}{2^n}\sup_{s\le 2^{n+1}}\|M(s)\|
\le 2\frac{1}{2^{n+1}}\sup_{v\in[0,1]}\|M(v2^{n+1})\|.
\]
We thus obtain the desired convergence:

\[ \sup_{v\in[0,1]}\Big\|\frac{1}{T}V(Tv)\Big\|\xrightarrow[T\to\infty]{}0
\quad\text{a.s.}
\]

\bigskip
\noindent \underline{We now turn to the proof of the following statement}:
\[ \frac{1}{T}m(Tv)\xrightarrow[T\to\infty]{} v\,(I-K)^{-1}\bar\mu
\quad\text{uniformly in }v\in[0,1].\]
Recall that \(\overline{G}(t):=\int_0^t \bb{E}\qty(g_{\mustar}(X(s)))\,ds\) and that the function $m$ satisfies the following equation:
\[
m(t)=\overline{G}(t)+(\psi\ast \overline{G})(t).
\]
Therefore, for all \(v\in[0,1]\),
\[
\frac{1}{T}m(Tv)=\frac{1}{T}\overline{G}(Tv)+\frac{1}{T}(\psi\ast \overline{G})(Tv).
\]

\emph{1. First term:} by the ergodicity assumption (e.g Assumption \(\ref{ass:ergodicitySDE}\)) and the fact that \(g_{\mustar}\) is bounded (see Assumption \ref{ass:bounded_baseline2}), we have: 

\[
\sup_{v\in[0,1]}\Big\|\frac{1}{T}\overline{G}(Tv)-v\bar\mu\Big\|\xrightarrow[T\to\infty]{}0.
\]

where the convergence is both a.s and in $L^2$.

\emph{2. Second term (convolution)} We write, by changing the variable \(s=Tu\),
\[
\frac{1}{T}(\psi\ast \overline{G})(Tv)
=\int_0^v \psi\qty(T(v-u))\overline{G}(Tu)\,du.
\]
Let us use the uniform decomposition
\[
\frac{\overline{G}(Tu)}{T} = u\bar\mu + r_T(u),\qquad \text{with }\ \sup_{u\in[0,1]}\|r_T(u)\|\xrightarrow[T\to\infty]{}0.
\] 
Then
\begin{align*}
\frac{1}{T}(\psi\ast \overline{G} )(Tv)
&= T\int_0^v \psi\qty(T(v-u))\qty(u\bar\mu + r_T(u))\,du\\
&= T\int_0^v \psi\qty(T(v-u))u\,du\ \bar\mu \;+\; T\int_0^v \psi\qty(T(v-u))r_T(u)\,du.
\end{align*}
For the first term, we perform the integration change \(w=T(v-u)\) and obtain
\[
T\int_0^v \psi\qty(T(v-u))u\,du
= \int_0^{Tv} \psi(w)\qty(v-\tfrac{w}{T})\,dw
= v\int_0^{Tv}\psi(w)\,dw - \frac{1}{T}\int_0^{Tv} w\psi(w)\,dw.
\]
Since \(\psi\in L^1\) and \(w\psi(w)\in L^1_{\mathrm{loc}}\) the first quantity converges for
\(T\to\infty\) to
\[
v\int_0^\infty \psi(w)\,dw.
\]
The second term (with \(r_T\)) is controlled by
\[
\Big\|T\int_0^v \psi(T(v-u))r_T(u)\,du\Big\|
\le \sup_{u}\|r_T(u)\|\cdot \int_0^{Tv}\|\psi(w)\|\,dw
\le \sup_{u}\|r_T(u)\|\cdot \int_0^\infty\|\psi(w)\|\,dw,
\]
which tends uniformly in \(v\) towards \(0\) when \(T\to\infty\) because \(\sup_u\|r_T(u)\|\to0\).

Thus
\[
\frac{1}{T}(\psi\ast A)(Tv)\xrightarrow[T\to\infty]{} v\qty(\int_0^\infty\psi(w)\,dw)\bar\mu
\quad\text{uniformly in }v.
\]
Noting the sum of convolutions,
\[
\int_0^\infty\psi(w)\,dw=\sum_{n\ge1} \int_0^\infty (h_{\thetastar})^{\ast n}(w)\,dw
= \sum_{n\ge1} K^n = (I-K)^{-1}-I,
\]
we finally obtain
\[
\frac{1}{T}m(Tv)\xrightarrow[T\to\infty]{} v\qty(I +\!\!\int_0^\infty\psi)\bar\mu = v\qty( Id - K)^{-1} \overline{\mu}.
\]

\noindent\emph{Conclusion.}
Combining the two points above, we have for all \(\varepsilon>0\),
\[
\sup_{v\in[0,1]}\Big\|\frac{1}{T}N(Tv)-v\,(I-K)^{-1}\bar\mu\Big\|
\le \sup_{v}\Big\|\frac{1}{T}\qty(N(Tv)-m(Tv))\Big\|
+ \sup_v\Big\|\frac{1}{T}m(Tv)-v\,(I-K)^{-1}\bar\mu\Big\|.
\]
Each term on the right-hand side tends uniformly to \(0\) as \(T\to\infty\) (the first bound read in I tends to \(0\) in \(L^2\) and almost surely, the second in II tends to \(0\) uniformly and even in \(L^2\) and almost surely). This establishes the uniform convergence announced, both almost surely and in the \(L^2\) norm.

\subsubsection{Proof of the CLT property.}
For the rescaled family
\[
M^{(T)}(v) := \tfrac{1}{\sqrt{T}}M(Tv), \qquad V^{(T)}(v) := \tfrac{1}{\sqrt{T}}V(Tv),
\]
the renewal equation yields
\[
V^{(T)}(v) = M^{(T)}(v) + \int_0^v T\,\psi(Tu)\,M^{(T)}_{v-u}\,du,
\]
with $\psi=\sum_{n\ge1} (h_{\thetastar})^{\ast n}$. 
\\
By the martingale FCLT, \cite{jacod2013limit}, it suffices to check the convergence of quadratic variations. For each component $i=1, \ldots, d$,
\[
\big[M^{(T)}_i,M^{(T)}_i\big]_v = \frac{1}{T} N_{i}(Tv).
\]
By the law of large numbers proved previously,
\[
\frac{1}{T}N_{i}(Tv)  \;\xrightarrow[T\to\infty]{\mathbb{P}}\; v\,(\Gamma\bar\mu)_i.
\]
Since no simultaneous jumps occur across coordinates, cross-variations vanish.  
Therefore $(M^{(T)})_{T>0}$ converges in law to a Gaussian martingale with independent components,
\[
\qty(M^{(T)}(v)) \xrightarrow[T \to \infty ]{} \qty(\Sigma^{1/2} W(v)),
\]
where $\Sigma_{ii}=(\Gamma\bar\mu)_i$. It remains to show that 
\[
\sup_{v\in[0,1]}\|V^{(T)}(v)-(\mathrm{Id}-K)^{-1} M^{(T)}(v)\|\;\xrightarrow[T\to\infty]{\mathbb{P}}0.
\]
The exact following decomposition
\begin{eqnarray*}
V^{(T)}(v) - (\mathrm{Id}-K)^{-1} M^{(T)}(v)
&=& \int_0^v T\psi(Tu)\qty(M^{(T)}(v-u)-M^{(T)}(v))\,du \\
&&+ \qty(\int_0^\infty\psi-\int_0^{v}T\psi(Tu)\,du)M^{(T)}(v)
\end{eqnarray*}
shows that
\begin{enumerate}
    \item  the first term vanishes by tightness and uniform continuity in probability of $(M^{(T)})$,  
    \item the second vanishes since $\int_0^{v}T\psi(Tu)\,du\to \int_0^\infty \psi$ and $\sup_v\|M^{(T)}(v)\|$ is bounded in probability.  
\end{enumerate}
Combining previous argument, we thus have
\[
\qty(V^{(T)}(v))_{v \in [0,1]} \;\xrightarrow[T \to \infty ]{}\; \qty((\mathrm{Id}-K)^{-1}\,\Sigma^{1/2} W(v))_{v \in [0,1]}.
\]
which concludes the proof of the first point stated.
Under Assumptions \ref{ass:ergodicitySDE}-\ref{ass:bounded_baseline2}, the expectation satisfies $\bb{E}(N(Tv))=Tv\,(\mathrm{Id}-K)^{-1}\bar\mu+o(T)$ uniformly in $v$. Hence
\[
\sqrt{T}\qty(\tfrac{1}{T}N(Tv)-v(\mathrm{Id}-K)^{-1}\bar\mu) 
= V^{(T)}(v)+o_{\mathbb{P}}(1),
\]
This concludes the proof.

\subsection{Proof of Theorem \ref{theo:mixing}}
\label{proof:mixing}

The goal is to show that distant parts of the process become asymptotically independent.
To that end, we introduce a truncated version of the process $U$, that is:
\begin{eqnarray*}
\widetilde{U}_{\theta,i}(s,t) &:=& \left(\lamb{\thetastar,i}{}(t), \bas{\mu}{i}\qty(X\qty(t)) + \sum_{j=1}^d \int_{(s,t)} h_{\theta,i,j}(t-s) \, d\N{j}{}(s), \nabla \bas{\mu}{i}\qty(X\qty(t)), \right.\\
&&\left. \quad \sum_{j=1}^d \int_{(s,t)} \nabla_{\theta}h_{\theta,i,j}(t-s) \, d\N{j}{}(s) \right).
\end{eqnarray*}
The key step is to prove that the truncated and non-truncated processes share the same asymptotic behavior. By stability, this implies that the effect of the initial condition vanishes as time increases.

\subsubsection{Weak mixing property}

We start by proving the weak mixing property. Let us take $\psi , \phi \in \mathcal{C}_b(E,\bb{R})$.

\begin{align*}
    \rho_{\theta,i}(u) &\leq \sup_{t\ge 0}\cov (\psi(U_{\theta,i}(t)), 
        \phi(U_{\theta,i}(t+u))-\phi(\widetilde{U}_{\theta,i}(t+\sqrt{u},t+u)) ) 
        && := A_1^\theta(u)\\
    &+ \sup_{t\geq 0 }\cov ( \psi(U_{\theta,i}(t)), 
        \phi(\widetilde{U}_{\theta,i}(t+\sqrt{u},t+u))  ) 
        && := A_2^\theta(u)
\end{align*}

\noindent \underline{Let us study the term $A_1^\theta(u) $.}
\begin{align*}
    A_1^\theta(u) & \leq \sup_{t \geq 0} \bb{V}\qty( \psi(U_{\theta,i}(t)) ) \bb{V}\qty(\phi(U_{\theta,i}(t+u))-\phi(\widetilde{U}_{\theta,i}(t+\sqrt{u},t+u)) ) \\
    & \leq \norm{\psi}_{L^{\infty}} \sup_{t \geq 0 } \bb{V}\qty(\phi(U_{\theta,i}(t+u))-\phi(\widetilde{U}_{\theta,i}(t+\sqrt{u},t+u)) ) \\
    & \leq \norm{\psi}_{L^{\infty}} \sup_{t \geq 0 } \bb{E}\qty( \norm{\phi(U_{\theta,i}(t+u))-\phi(\widetilde{U}_{\theta,i}(t+\sqrt{u},t+u)) }^2 ).
    \end{align*}
The first inequality follows from the Cauchy–Schwarz inequality, the second from the boundedness of $\psi$, and the third is a consequence of $\bb{V}(X) \leq \bb{E}\qty(X^2)$.
We now prove that \[ \sup_{t \geq 0 } \bb{E}\qty( \norm{\phi(U_{\theta,i}(t+u))-\phi(\widetilde{U}_{\theta,i}(t+\sqrt{u},t+u)) }^2 ) \xrightarrow[]{} 0.\]
The proof is composed of two main steps. 
\begin{enumerate}
    \item Recall that 
    $\displaystyle \sup_{t \ge 0} \bb{E}\qty[\norm{\widetilde{U}_{\theta,i}(t+\sqrt{u},t+u) - U_{\theta,i}(t+u)}] \xrightarrow[u \to \infty ]{} 0.$
    \item Then, for all $\varepsilon > 0$,
    $\displaystyle \sup_{t \ge 0} \bb{P}\Big(\norm{\phi(U_{\theta,i}(t+u))-\phi(\widetilde{U}_{\theta,i}(t+\sqrt{u},t+u))} > \varepsilon\Big)  \xrightarrow[u \to \infty ]{} 0.$
\end{enumerate}

\noindent  We start by the first point.

Since $\widetilde{U}_{\theta,i}(t+\sqrt{u},t+u)$ only depends on the process history up to $t+\sqrt{u}$. The exponential kernel implies
\[
\sup_{t \ge 0} \bb{E}\qty[\|\widetilde{U}_{\theta,i}(t+\sqrt{u},t+u) - U_{\theta,i}(t+u)\|] = \mathcal{O}\big(e^{-\underline{\beta}\sqrt{u}/2}\big).
\]
This ensures the convergence of $\sup_{t \ge 0} \bb{E}\qty[\|\widetilde{U}_{\theta,i}(t+\sqrt{u},t+u) - U_{\theta,i}(t+u)\|]$.

\noindent Let's now turn to the second point

Let $\eta,\varepsilon>0$. Since $U_{\theta,i}$ converges to a stationary process (see Proposition \ref{prop:stationary_intensity}), there exist a compact $K_\eta$ and $t_0\ge 0$ s.t. for all $t\ge t_0$, $u\ge 0$,
\[
\bb{P}(U_{\theta,i}(t+u)\notin K_\eta \text{ or } \widetilde{U}_{\theta,i}(t+\sqrt{u},t+u)\notin K_\eta)\le \eta.
\]
So for all $u$, for all $t\ge t_0$, we have 
\begin{align*} & \mathbb{P}\Big( \|\phi(U_{\theta,i}(t+u)) - \phi(\widetilde{U}_{\theta,i}(t+\sqrt{u}, t+u))\| \ge \varepsilon \Big) \\ 
&\le \mathbb{P}\Big( U_{\theta,i}(t+u) \notin K_\eta \ \cup\ \widetilde{U}_{\theta,i}(t+\sqrt{u}, t+u) \notin K_\eta \Big) \\ 
&\quad + \mathbb{P}\Big( U_{\theta,i}(t+u), \widetilde{U}_{\theta,i}(t+\sqrt{u}, t+u) \in K_\eta,\ \|\phi(U_{\theta,i}(t+u)) - \phi(\widetilde{U}_{\theta,i}(t+\sqrt{u}, t+u))\| > \varepsilon \Big). 
\end{align*}
And, on $K_\eta$, $\psi$ is continuous and therefore absolutely continuous. Therefore, there exists $\delta$ such that for all $x,y$ in $K_\eta$, \[ \norm{x-y} \le \delta \text{ implies } \norm{ \phi(x)-\phi(y) } \le \varepsilon. \] So for all $u \ge 0$, for all $t \ge t_0$, 
\begin{align*} \bb{P}\qty( \norm{\phi(U_{\theta,i}(t+u))-\phi(\widetilde{U}_{\theta,i}(t+\sqrt{u},t+u)) } \ge \varepsilon ) \leq \eta + \bb{P}\qty( \norm{U_{\theta,i}(t+u)-\widetilde{U}_{\theta,i}(t+\sqrt{u},t+u) } \ge \delta) \end{align*} 
Since $\eta$ is chosen arbitrarily and $\sup_{s\ge 0 }\bb{P}\qty( \norm{U_{\theta,i}(t+u)-\widetilde{U}_{\theta,i}(t+\sqrt{u},t+u) } \ge \delta) \to 0 $. We can deduce that $\sup_{t \geq 0 } \bb{P}\qty( \norm{\phi(U_{\theta,i}(t+u))-\phi(\widetilde{U}_{\theta,i}(t+\sqrt{u},t+u)) } > \varepsilon) \to 0.$\\

Let us now conclude the proof.
Let us fix $\varepsilon>0$.
For all $u,t\ge 0$, we have,
\begin{align*}
   & \quad \sup_{ t\ge 0}\bb{E}\qty( \norm{ \psi\qty( U_{\theta,i}(u+t)) - \psi\qty( \widetilde{U}_{\theta,i}(\sqrt{u}+T,u+t))  }^2 )  \\
    &\leq \varepsilon^2  + \sup_{ t\ge 0} \bb{E}\qty( \norm{ \psi\qty( U_{\theta,i}(u+t)) - \psi\qty( \widetilde{U}_{\theta,i}(\sqrt{u}+T,u+t))}^2 \indic_{ \norm{\psi\qty( U_{\theta,i}(u+t)) - \psi\qty( \widetilde{U}_{\theta,i}(\sqrt{u}+T,u+t))} \ge \varepsilon } )  \\
    & \leq \varepsilon^2 + \norm{\psi}_{L^\infty}\sup_{ t\ge 0} \bb{P}\qty(  \norm{\psi\qty( U_{\theta,i}(u+t)) - \psi\qty( \widetilde{U}_{\theta,i}(\sqrt{u}+T,u+t))} \ge \varepsilon ) 
\end{align*}
As a result, $ \sup_{ t\ge 0}\bb{E}\qty( \norm{ \psi\qty( U_{\theta,i}(u+t)) - \psi\qty( \widetilde{U}_{\theta,i}(\sqrt{u}+T,u+t))  }^2 )$ converge to $0$. We thus have shown that $A_1^\theta(u)  \xrightarrow[u \to \infty ]{}  0  $.

\noindent \underline{ We can now turn to  the study of $A_2^\theta(u) $.}

\begin{align*}
     A_2^\theta(u)  & =   \sup_{t\geq 0 }\cov ( \psi(U_{\theta,i}(t)), 
        \phi(\widetilde{U}_{\theta,i}(t+\sqrt{u},t+u))  ) \\
        & =   \sup_{t\geq 0 } \bb{E}\qty( \qty[ \psi(U_{\theta,i}(t)) - \bb{E}\qty( \psi(U_{\theta,i}(t))) ] \times  \qty[ \phi(\widetilde{U}_{\theta,i}(t+\sqrt{u},t+u)) - \bb{E}\qty(\phi(\widetilde{U}_{\theta,i}(t+\sqrt{u},t+u)))  ] )
\end{align*}

Thanks to Proposition \ref{prop:stationary_intensity}, we know that for all $t \ge 0 $, $$\bb{E} \qty[ \phi(\widetilde{U}_{\theta,i}(t+\sqrt{u},t+u)) - \bb{E}\qty(\phi(\widetilde{U}_{\theta,i}(t+\sqrt{u},t+u)))  \vert \mathcal{F}_t] \xrightarrow[u \to 0]{\mathcal{L}_1} 0.$$
As a result,  $A_2^\theta(u)  \xrightarrow[u \to \infty ]{} 0$, which concludes the proof.

\subsubsection{Strong mixing property}
The ideas and structure of the proof of the strong mixing result are essentially identical to those in Lemma~A.6 of \cite{clinet2017statistical}.
The only difference lies in the way the baseline terms are incorporated: in our setting, the baseline process explicitly appears in $U_{\theta,i}$ and its truncated version $\widetilde{U}_{\theta,i}(s,t)$, whereas it is not the case in \cite{clinet2017statistical}.
Nevertheless, the proof in \cite{clinet2017statistical} can be followed line by line, replacing what we denote $U_{\theta,i}$ and $\widetilde{U}_{\theta,i}(s,t)$ by their analogs $X^i(t,\theta)$ and $\widetilde{X}^i(s,t,\theta)$.
The reasons are as follows:
\begin{itemize}
    \item The covariance can be decomposed, by applying the Cauchy--Schwarz inequality, into a ``difference'' term (analogous to $A_1$ in the preceding proof) and a covariance term involving the truncated block (analogous to $A_2$ in the same proof),
    \item the truncated future block has the same asymptotic law as original process,
    \item uniform moment and control bounds hold for all relevant processes.
\end{itemize}
Under our assumptions, all these properties remain valid. We therefore refer the reader to \cite{clinet2017statistical} for the detailed argument.

\subsection{Proof of Theorem \ref{theo:AsymptoticNormality}}
\label{proof:AsymptoticNormality}
The proof is divided into two subsections, each addressing one of convergence stated in the theorem.

\subsubsection{Convergence in distribution}
\label{subsection:MLEcvDistrib}

We begin by showing the convergence in distribution of the MLE, based on Theorem 1 of \cite{potironinference}. This theorem ensures that under the following conditions, the MLE is indeed asymptotically normal. We list below the required conditions, and then prove that they are indeed verified in our set-up.

\begin{customass}{A}
\label{ass:potiron}
Suppose that the following assumptions hold. 
\begin{enumerate}[start=1,label={(A\arabic*)}, ref={(A\arabic*)}]
  \item \label{ass:potion1} For each $\theta\in\Theta$, the intensity process
  $\lamb{\theta}{}(t)$ is nonnegative, measurable in $(\omega,t,\theta)$,
  and $\lamb{\theta}{}(t)\in\mathbb{R}^d_{+}$ for all $t\in\mathbb{R}_+$. Furthermore, for each time $s\in\mathbb{R}_+$, the mapping
  $\theta\mapsto\lamb{\theta}{}(s)$ is twice continuously differentiable
  from $\Theta$ to $\mathbb{R}^d_+$, with a continuous extension to~$\Theta$.

  \item \label{ass:potion2} The process  $U_{\theta, i}$ defined by \ref{eq:U} is ergodic in the following sense: for any bounded measurable $\psi$ and any component $i=1,\dots,d$,
  \[
     \frac{1}{T}\int_0^T \psi(U_{\theta, i}(t))\,dt
     \xrightarrow[T\to\infty]{\P} \pi^{(i)}(\psi),
  \]
  where $\pi^{(i)}$ denotes the invariant probability measure.
  \item \label{ass:potion3} We have
  \[
  \sup_{t\in\mathbb{R}_+} \bb{E}\qty( \sup_{\theta\in\Theta} \norm{\lamb{\theta}{}(t)}^2 ) < +\infty
  \]
  \item \label{ass:potion4} The quantity $\bb{Y}(\theta)$, defined by
\begin{equation}
\label{eq:Y(theta)}
\bb{Y}(\theta) = \sum_{i=1}^d\bb{E}\left[ \olamb{\thetastar,i}{} - \olamb{\theta,i}{} + \log\qty(\frac{\olamb{\theta,i}{}}{\olamb{\thetastar,i}{}})\olamb{\thetastar,i}{} \right]
\end{equation}
verifies $\bb{Y}(\theta) \ne 0$ for all $\theta \in \Theta \backslash \{ \thetastar\}$
  \item \label{ass:potion5} For all $\theta\in\Theta$ and any $T>0$,
  \[
     \mathbb{P}\!\left(
        \int_0^T |\nabla_\theta\lamb{\theta,i}{}(t)|\,dt < \infty
     \right)=1,
     \qquad
     \mathbb{P}\!\left(
        \int_0^T |\nabla^2_{\theta}\lamb{\theta,i}{}(t)|\,dt < \infty
     \right)=1.
  \]

  \item \label{ass:potion6} For each $i=1,\dots,d$,
  \begin{align*}
  \sup_{t\in\mathbb{R}_+}
  &\bb{E}\qty( \abs{ 1_{\{\lamb{\thetastar,i}{}(t)>0\}} \norm{ \nabla_\theta \qty( \frac{ \nabla_\theta\lamb{\thetastar,i}{}(t)}{\lamb{\thetastar,i}{}(t)} )} \lamb{\thetastar,i}{}(t)  }^2)<+\infty,\\[0.4em]
  \sup_{t\in\mathbb{R}_+}
  &\bb{E} \qty(    1_{\{\lamb{\thetastar,i}{}(t)>0\}} \norm{ \nabla_\theta \qty( \frac{\nabla_\theta  \lamb{\thetastar, i}{\otimes 2}}{\lamb{\thetastar,i}{}(t)})  \lamb{\thetastar,i}{}(t)}_1)<+\infty,\\[0.4em]
  \sup_{t\in\mathbb{R}_+}
  & \bb{E}\qty(  \frac{ \norm{\nabla^2_{\theta} \lamb{\thetastar,i}{}(t)}
         \, \norm{ \nabla_\theta\lamb{\thetastar,i}{}(t)}}
         {\lamb{\thetastar,i}{}(t)}1_{\{\lamb{\thetastar,i}{}(t)>0\}})<+\infty.
  \end{align*}
\end{enumerate}
\end{customass}
For sake of clarity, let us remind Theorem 1 of \cite{potironinference}.
\begin{customtheo}{A}[Theorem 1 of \cite{potironinference}] \label{theo:potiron}Under  Condition \ref{ass:potiron}, the MLE estimator $\MLE$ verifies 
    \[ \sqrt{T} \qty( \MLE - \thetastar) \limiteloi \mathcal{N}\qty( 0, \Gamma^{-1}).\]
\end{customtheo}
We now aim to apply Theorem~\ref{theo:potiron}.  
Condition~\ref{ass:potion1} is trivially satisfied. Assumptions~\ref{ass:potion3}--\ref{ass:potion5}--\ref{ass:potion6} follow directly from Proposition~\ref{prop:bounded_moment_general_lambda}, together with Assumption~\ref{ass:bounded_baseline3} and Assumption~\ref{ass:moment_g(X)}(2,j) for $j \in \{1,2\}$. 
Similarly, the ergodicity of the vector $U_{\theta,i}$ is ensured by the mixing property established in Theorem~\ref{theo:mixing}.  
\\

We now turn to the proof of the remaining condition.
\begin{lemme}
\label{proof:identifiability_weak}
    Under Assumptions \ref{ass:ergodicitySDE}--\ref{ass:identifiability_baseline}, the process defined by Equation \eqref{eq:modelLambda} verifies condition \ref{ass:potion4}
\end{lemme}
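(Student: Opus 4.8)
The plan is to recognize $\bb{Y}(\theta)$ as minus a Kullback--Leibler-type contrast and thereby reduce condition \ref{ass:potion4} to an identifiability statement for the stationary intensities. Writing $a=\olamb{\thetastar,i}{}(t)$ and $b=\olamb{\theta,i}{}(t)$, the integrand is $f(a,b)=a-b+a\log(b/a)$. Since the baselines are bounded below (Assumption \ref{ass:bounded_baseline2}), both $a,b>0$; and $f$ is strictly concave in $b$ with unique maximum $f(a,a)=0$ at $b=a$, so each summand is $\le 0$. Hence $\bb{Y}(\theta)\le 0$, with $\bb{Y}(\theta)=0$ if and only if $\olamb{\theta,i}{}(t)=\olamb{\thetastar,i}{}(t)$ for almost every $t$, $\bb{P}$-almost surely, for every $i$. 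It therefore suffices to prove the contrapositive of \ref{ass:potion4}, namely that this equality of stationary intensities forces $\theta=\thetastar$.

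First I would set $D_i(t):=\olamb{\theta,i}{}(t)-\olamb{\thetastar,i}{}(t)$ and read off its pathwise structure: $D_i$ is continuous on each inter-event interval and jumps by $\alpha_{ij}-\alphastar_{ij}$ at each event of component $j$. Because $D_i\equiv 0$ for almost every $t$ and is continuous between events, it vanishes identically on every inter-event interval, so all its jumps vanish. Since each component $j$ produces at least one event almost surely (the stationary intensity is bounded below by $g_-$), this yields $\alpha_{ij}=\alphastar_{ij}$ for all $i,j$.

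With $\alpha=\alphastar$ we have $D_i(t)=\delta_i(\ov{X}(t^-))+H_i(t)$, where $\delta_i:=\bas{\mu}{i}-\bastar{\mu}{i}$ and $H_i$ is the kernel-difference process, now continuous and of finite variation. The diffusion term $G_i(t):=\delta_i(\ov{X}(t))$ is a continuous semimartingale whose quadratic variation is, by It\^o's formula, $\int_0^t \nabla_x\delta_i(\ov{X}(s))^\top a(\ov{X}(s))\,\nabla_x\delta_i(\ov{X}(s))\,ds$, while $[H_i]\equiv 0$. From $G_i=-H_i$ the quadratic variation must vanish, giving $\nabla_x\delta_i(\ov{X}(s))^\top a(\ov{X}(s))\nabla_x\delta_i(\ov{X}(s))=0$ for $\pi_X$-almost every point; Assumption \ref{ass:identifiability_elliptic} then upgrades this to $\nabla_x\delta_i\equiv 0$, so $\delta_i\equiv c_i$ is constant. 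The baseline contribution being constant, $H_i(t)=-c_i$ is now a pure point-process functional, piecewise-$C^1$ between events; differentiating gives $\dot H_i\equiv 0$, and the jump of $\dot H_i$ across an event of component $j$ equals $\alphastar_{ij}(\betastar_{ij}-\beta_{ij})$, which must vanish, so $\beta_{ij}=\betastar_{ij}$. Consequently $H_i\equiv 0$, forcing $c_i=0$, i.e. $\bas{\mu}{i}\equiv\bastar{\mu}{i}$; the one-to-one property in Assumption \ref{ass:identifiability_baseline} then gives $\mu=\mustar$. Thus $\theta=\thetastar$.

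The hard part will be Step 3: disentangling the diffusion-driven baseline from the jump-driven memory. Because $\ov{X}$ is nowhere differentiable in $t$, one cannot differentiate $D_i$ directly, and the quadratic-variation argument is precisely what separates the continuous (diffusion) component from the finite-variation (kernel) component. Assumption \ref{ass:identifiability_elliptic} is then exactly what bridges the degenerate identity $\nabla_x\delta_i^\top a\,\nabla_x\delta_i=0$ to $\nabla_x\delta_i\equiv 0$, which is why it is needed beyond mere ergodicity of $\ov{X}$. Once the baseline is shown to be constant, the diffusion disappears and the identification of $\alpha$ and $\beta$ reduces to the clean pathwise jump computations of Steps 2 and 3.
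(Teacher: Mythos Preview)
Your proof is correct and follows essentially the same route as the paper's: the KL-type reduction to equality of stationary intensities, the jump argument for $\alpha=\alphastar$, the quadratic-variation argument (via It\^o plus Assumption~\ref{ass:identifiability_elliptic}) to force $\delta_i$ constant, then constancy of the kernel-difference term to recover $\beta=\betastar$, and finally $\mu=\mustar$. The only cosmetic differences are that you make the $\beta$-step explicit via the jump of $\dot H_i$ (the paper simply asserts that constancy of the integral forces $\beta=\betastar$), and that you invoke the injectivity part of Assumption~\ref{ass:identifiability_baseline} in the last step whereas the paper uses the support condition \ref{ass:identifiability_densityXbar} together with Harris recurrence---both are valid under the stated assumption.
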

\begin{proof}
    Recall that $\bb{Y}(\theta)$, defined in \eqref{eq:Y(theta)} as the asymptotic difference of the log-likelihood, is given by:
\[
\bb{Y}(\theta) = \sum_{i=1}^d\bb{E}\left[ \olamb{\thetastar,i}{} - \olamb{\theta,i}{} + \log\qty(\frac{\olamb{\theta,i}{}}{\olamb{\thetastar,i}{}})\olamb{\thetastar,i}{} \right],
\]
where $\olamb{\theta,i}{}(t)$ denotes the intensity of the stationary version of the process given in Equation \eqref{eq:stationary_intensity}. Let us denote
$$\bb{E}\left[ \olamb{\theta,i}{}(t) - \olamb{\thetastar,i}{}(t) - \log\qty(\frac{\olamb{\theta,i}{}(t)}{\olamb{\thetastar,i}{}(t)}) \olamb{\thetastar,i}{}(t) \right] = \bb{E}\left[ \olamb{\thetastar,i}{}(t) f\qty(\frac{\olamb{\theta,i}{}(t)}{\olamb{\thetastar,i}{}(t)}) \right]$$
with $f: x \mapsto 1 - x + \log(x)$ a negative function, null only in $x = 1$.
Therefore, $\bb{Y}(\theta) = 0$ implies that, for all $i\in \{1,\ldots,d\}$ for all $t$, $\olamb{\theta,i}{}(t) = \olamb{\thetastar,i}{}(t)$, $\bb{P}$-almost certainly. Let us show that $\theta=\thetastar$. Both processes are càdlàg, implying that, for all $i$, we have $\ps{}$
\begin{equation}
\label{eq:identification}
    \bas{\mu}{i}(\ov{X}(t)) - \bastar{\mu}{i}(\ov{X}(t)) = \sum_{j=1}^d \int_{(-\infty,t)} \qty(\alphastar_{ij} e^{-\betastar_{ij}(t-s) } - \alpha_{ij} e^{-\beta_{ij}(t-s) }) d\ov{N}_j(s).
\end{equation}

The left member of this equality is a continuous process, whereas the right member is a jump process, which jumps have size $(\alpha_{ij}-\alphastar_{ij})_j$, implying that for all $i,j\in \bb{R}^d$ $\alpha_{ij} = \alphastar_{ij}$.

Then, the right-hand side of Equation~\eqref{eq:identification} is $\mathcal{C}^1$ with respect to time. Hence, it has a finite variation and therefore a zero quadratic variation. 
Thus, the left-hand side of the equation has zero quadratic variation. Since for all $\mu$, the function $\bas{\mu}{i}$ is of class $\mathcal{C}^2$, Itô's formula implies that the quadratic variation of the process $\qty(\bas{\mu}{i} - \bastar{\mu}{i})(X(t))$ is given by:
\[
\int_{0}^{t} \nabla_x \qty(\bas{\mu}{i} - \bastar{\mu}{i})(X(s))^\top \, \sigma(X(s)) \sigma(X(s))^\top \, \nabla_x \qty(\bas{\mu}{i} - \bastar{\mu}{i})(X(s)) \, ds.
\]
Since $\sigma(x) \sigma(x)^\top$ is positive semi-definite for all $x$, the integrand must vanish almost everywhere. Therefore, for all $t$,
\[
\nabla_x \qty(\bas{\mu}{i} - \bastar{\mu}{i})(X(t)) = 0.
\]
Then, by Assumption~\ref{ass:identifiability_baseline}, we deduce that
\[
\nabla_x \qty(\bas{\mu}{i} - \bastar{\mu}{i})(x) = 0 \quad \text{for all } x,
\]
so that $\bas{\mu}{i} - \bastar{\mu}{i}$ is a constant function. Using Equation \eqref{eq:identification} with this information, we have shown that for all $i \in \{ 1, \ldots,d\}$ \[
\sum_{j=1}^d \int_{(-\infty,t)} \qty(\alpha_{ij} e^{-\betastar_{ij}(t-s) } - \alpha_{ij} e^{-\beta_{ij}(t-s) }) d\ov{N}_j(s)
\]
is constant w.r.t time and therefore for all $i,j$, $\beta_{ij} = \betastar_{ij}$.
Hence, we now have that $\bas{\mu}{i}(\ov{X}(t)) - \bastar{\mu}{i}(X(t)) = 0$.
And, if $\mu_i \ne \mustar_i $, $\pi_X \qty(\qty(\bas{\mu}{i}- \bastar{\mu}{i})^{-1}\qty(\bb{R}^{d_i}\backslash\{0\}))>0$, which, thanks to the Harris recurrence property of $X$, ensures that $X$ returns an 
infinite number of times in $\bb{R}^{d_i}\backslash\{0\}$. As a result, $\mu_i = \mustar_i$, which concludes the proof.
\end{proof}

\subsubsection{Convergence for all the moments}
\label{subsection:MLEcvMoment}

Now prove the second point of Theorem \ref{theo:AsymptoticNormality}. The proof relies on Theorem 3.14 of \cite{clinet2018statistical} which gives four necessary conditions, ensuring the asymptotic normality of the MLE in the general framework of a point process. First, the conditions of application are specified, then their verification in our setting is demonstrated.
\vspace{\baselineskip}
\begin{customass}{B}
\label{ass:clinet}
\begin{enumerate}[label={(M\arabic*)}]    
        \item \label{ass:clinet1} The mapping $\theta \mapsto \lamb{\theta}{}(t)$ is $\mathcal{C}^3$ on $\Theta$ and admits a continuous extension to $\overline{\Theta}$, the closure of $\Theta$.
        \item \label{ass:clinet2} for all $p>1$ and $i \in \llbracket 0,4 \rrbracket$ we have
\[ \sup_{t \in \bb{R}_+}  \bb{E} \qty(\norm{ \sup_\theta \nabla_\theta^i \lamb{\theta}{}(t)}^p_p)< + \infty. \]
\item \label{ass:clinet3} There exists $\gamma \in (0,1/2)$ such that, for all $i$, $U_{\theta,i}$ is $\mathcal{D}(E,\mathbb{R})$-mixing uniformly in $\theta \in \Theta$, with a rate $\rho_u$ satisfying
\[
\rho_u = o(u^{-\varepsilon}) \quad \text{for some } \varepsilon > \frac{2\gamma}{1-2\gamma}.
\]  
Furthermore, there exists a limiting process $\olamb{\theta,i}{}$ such that for all $\psi \in \mathcal{D}(E,\mathbb{R})$,
\[
\sup_{\theta \in \Theta} t^\gamma \Big|\bb{E}\qty[\psi(U_{\theta,i}(t))] - 
\bb{E}\qty[\psi(\olamb{\thetastar,i}{}, \olamb{\theta,i}{}, \partial_\theta \olamb{\theta,i}{})]\Big| \xrightarrow[t \rightarrow \infty]{} 0
\]
with U given in Equation \eqref{eq:U}.
\item \label{ass:clinet4} $\bb{Y}(\theta)$, defined by Equation \eqref{eq:Y(theta)}, verifies : $$ \inf_{\theta \in \Theta} - \frac{\bb{Y}(\theta)}{\norm{\theta- \thetastar}^2} \geq 0.$$
    \end{enumerate}
\end{customass}

\begin{customtheo}{B}[Theorem 3.14 of \cite{clinet2018statistical}] \label{theo:clinet}Under the Condition \ref{ass:clinet}, the MLE estimator $\MLE$ verifies 
    \[ \bb{E}\qty( \psi\qty( \sqrt{T} \qty( \MLE - \thetastar) )) \xrightarrow[T\to \infty]{} \bb{E}\qty( \psi\qty(\mathcal{N}\qty( 0, \Gamma^{-1})))\]
    for every $\psi$ with polynomial growth.
\end{customtheo}

In order to apply Theorem~\ref{theo:clinet}, we verify that Conditions \ref{ass:clinet} hold in our setting.

Condition~\ref{ass:clinet1} is straightforward given the intensity formula. 
Condition~\ref{ass:clinet2} follows from Assumption~\ref{ass:moment_g(X)}$(p,j)$, 
which holds for all $p>1$ and $j \in \{1,2,3,4\}$, 
together with Assumption~\ref{ass:bounded_baseline3} 
and Proposition~\ref{prop:bounded_moment_general_lambda}. 
The first part of Condition~\ref{ass:clinet3} has already been established 
in Theorem~\ref{theo:mixing}. 

We now proceed to prove the two final point.

\begin{lemme} 
\label{lemme:strong_stationnarity}
Under Assumption \ref{ass:bounded_baseline3} 
and \ref{ass:moment_g(X)}$(p,1)$, for all $p > 1$, the process verifies \ref{ass:clinet3}.
\end{lemme}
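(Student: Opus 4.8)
The condition to be verified splits into two independent statements, and the first one requires no new work. The uniform $\mathcal{D}(E,\bb{R})$-mixing with rate $\rho_u = o(u^{-\varepsilon})$ is precisely the strong mixing property of Theorem~\ref{theo:mixing}, which holds under Assumptions~\ref{ass:bounded_baseline3} and~\ref{ass:moment_g(X)}$(p,1)$ (for all $p>1$) together with the standing ergodicity hypothesis. Since that theorem yields an \emph{exponential} rate $\rho_i(u)=o(r^u)$ with $r<1$, the polynomial bound $o(u^{-\varepsilon})$ holds for every $\varepsilon>0$; one may therefore fix any $\gamma\in(0,1/2)$ and the requirement $\varepsilon>2\gamma/(1-2\gamma)$ is automatically met. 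The entire content of the lemma thus reduces to the second statement: the convergence of expectations $\bb{E}[\psi(U_{\theta,i}(t))]\to\bb{E}[\psi(\olamb{\thetastar,i}{},\olamb{\theta,i}{},\partial_\theta\olamb{\theta,i}{})]$ at polynomial rate $t^{-\gamma}$, uniformly in $\theta$.

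My plan for this is to couple the process with its stationary version and transfer the $L^1$ convergence of Proposition~\ref{lemma:speed_cv_stationnary} through $\psi$. Concretely, I would realize $U_{\theta,i}(t)$ and the stationary triple $\overline{U}_{\theta,i}(t):=(\olamb{\thetastar,i}{}(t),\olamb{\theta,i}{}(t),\nabla_\theta\olamb{\theta,i}{}(t))$ on the same space, using the stationary diffusion $\overline{X}$ and the stationary Hawkes process $\overline{N}$ of Proposition~\ref{prop:stationary_intensity} driven by the same noise, so that by stationarity $\bb{E}[\psi(\overline{U}_{\theta,i}(t))]$ is constant in $t$ and equals the target. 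Since $\psi\in\mathcal{D}(E,\bb{R})$ is $\mathcal{C}^1$ with $\nabla\psi$ of polynomial growth, the mean value inequality gives, for some integer $m$,
\[
\abs{\psi(U_{\theta,i}(t))-\psi(\overline{U}_{\theta,i}(t))}\le C\big(1+\norm{U_{\theta,i}(t)}^m+\norm{\overline{U}_{\theta,i}(t)}^m\big)\,\norm{U_{\theta,i}(t)-\overline{U}_{\theta,i}(t)},
\]
after which taking expectations and applying Hölder's inequality separates a growth factor from the coupling error.

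The growth factor is bounded uniformly in $t$ and $\theta$: Proposition~\ref{prop:bounded_moment_general_lambda} with Assumptions~\ref{ass:bounded_baseline3} and~\ref{ass:moment_g(X)}$(p,1)$ gives $\sup_{t\ge0}\sup_\theta\bb{E}[\norm{U_{\theta,i}(t)}^{p}]<\infty$ for every $p$, and likewise for the stationary process. For the coupling error I would upgrade the $L^1$ bound of Proposition~\ref{lemma:speed_cv_stationnary} to an $L^q$ bound (some $q>1$) of the form $\bb{E}[\norm{U_{\theta,i}(t)-\overline{U}_{\theta,i}(t)}^q]^{1/q}\le Ce^{-ct}$, either by redoing the renewal/convolution estimate at order $q$ using the exponential kernel decay (rate $\ge\underline{\beta}$) and the exponential ergodicity of $X$ (Theorem~\ref{theo:exp_ergodicity}), or by interpolating the $L^1$ exponential bound against the uniform $L^p$ moment bounds. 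The derivative component is treated identically after differentiating the intensity in $\theta$: this only introduces weights $e^{-\beta_{ij}(t-s)}$ and their $\beta$-derivatives, still exponentially decaying and integrable, and is exactly where Assumption~\ref{ass:moment_g(X)}$(p,1)$ enters at derivative order $j=1$. Multiplying the two factors yields $\sup_\theta\abs{\bb{E}[\psi(U_{\theta,i}(t))]-\bb{E}[\psi(\olamb{\thetastar,i}{},\olamb{\theta,i}{},\partial_\theta\olamb{\theta,i}{})]}\le C'e^{-c't}$, so the $t^\gamma$-weighted quantity tends to $0$.

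The principal obstacle is the \emph{uniformity in} $\theta$, since Proposition~\ref{lemma:speed_cv_stationnary} is stated with a $\theta$-dependent constant $C_\theta$. The resolution is that the exponential rate $c$ depends only on $\underline{\beta}$ and on the (uniform) ergodic rate of $X$, hence not on $\theta$, while the prefactor is uniformly bounded because $\Theta$ is compact and the moment bounds above are uniform; the compactness of $\Theta_2=[\underline{\alpha},\overline{\alpha}]^{d^2}\times[\underline{\beta},\overline{\beta}]^{d^2}$ is precisely what makes the kernels, their $\theta$-derivatives, and all associated constants uniformly controlled. The secondary technical point is the passage from the $L^1$ rate to an $L^q$ rate of the coupling error, which I expect to be the most delicate bookkeeping but which follows from the same arguments already used to prove Proposition~\ref{lemma:speed_cv_stationnary}.
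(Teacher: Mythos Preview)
Your proposal is correct and follows essentially the same route as the paper: reduce to the second statement of \ref{ass:clinet3}, couple $U_{\theta,i}(t)$ with its stationary version $\overline{U}_{\theta,i}(t)$, apply the mean value inequality for $\psi\in\mathcal{D}(E,\bb{R})$, separate the polynomial-growth factor (controlled by uniform moment bounds) from the coupling error via H\"older, and use that the coupling error decays exponentially. The paper's proof is terser and handles the uniformity in $\theta$ by observing directly that under Assumption~\ref{ass:bounded_baseline3} the constant $C_\theta$ of Proposition~\ref{prop:stationary_intensity} is already independent of $\theta$ (since $g_-,g_+$ are uniform in $\mu$), which is a slightly cleaner way to phrase what you obtain from compactness of $\Theta$; otherwise the arguments coincide.
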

The proof of this result is given in \ref{appendix:proof_stability}.
\begin{lemme}
    Under Assumptions \ref{ass:ergodicitySDE}--\ref{ass:identifiability_baseline}, the process defined by Equation \eqref{eq:modelLambda} verifies condition \ref{ass:clinet4}.
\end{lemme}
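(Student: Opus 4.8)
The plan is to notice that condition~\ref{ass:clinet4} reduces to a single sign statement, namely $\bb{Y}(\theta)\le 0$ for every $\theta\in\Theta$. Indeed, on $\Theta\setminus\{\thetastar\}$ the normalizing factor $\norm{\theta-\thetastar}^2$ is strictly positive, and the infimum of a family of nonnegative reals is nonnegative; so once $-\bb{Y}(\theta)\ge 0$ is known, the bound $\inf_{\theta}-\bb{Y}(\theta)/\norm{\theta-\thetastar}^2\ge 0$ follows at once. Hence it suffices to control the sign of $\bb{Y}$, and I would do this exactly by the factorization already used at the start of the proof of Lemma~\ref{proof:identifiability_weak}. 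Writing, for each component $i$,
\[
\olamb{\thetastar,i}{} - \olamb{\theta,i}{} + \log\!\qty(\frac{\olamb{\theta,i}{}}{\olamb{\thetastar,i}{}})\olamb{\thetastar,i}{}
= \olamb{\thetastar,i}{}\, f\!\qty(\frac{\olamb{\theta,i}{}}{\olamb{\thetastar,i}{}}),
\qquad f(x):=1-x+\log x,
\]
recasts $\bb{Y}(\theta)$ as a sum over $i$ of expectations of the integrand $\olamb{\thetastar,i}{}\,f(\cdot)$.

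The key elementary fact is the concavity inequality $\log x\le x-1$ for all $x>0$, which gives $f(x)\le 0$ with equality only at $x=1$. Since the stationary baseline is bounded below by a strictly positive constant (Assumption~\ref{ass:bounded_baseline3}, or \ref{ass:bounded_baseline2}) and the excitation part is nonnegative, one has $\olamb{\thetastar,i}{}>0$, so the integrand $\olamb{\thetastar,i}{}\,f(\olamb{\theta,i}{}/\olamb{\thetastar,i}{})$ is pointwise $\le 0$; even in the degenerate situation $\olamb{\thetastar,i}{}=0$ the integrand collapses to $-\olamb{\theta,i}{}\le 0$, so the sign is preserved. Taking expectations under the stationary law and summing over $i$ yields $\bb{Y}(\theta)\le 0$ for every $\theta$, hence $-\bb{Y}(\theta)/\norm{\theta-\thetastar}^2\ge 0$ on $\Theta\setminus\{\thetastar\}$ and the claimed infimum bound.

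The only point that genuinely requires care — rather than a real obstacle — is that $\bb{Y}(\theta)$ must be well defined and finite for the quotient to make sense, which is where the standing moment hypotheses enter: the linear terms $\bb{E}[\olamb{\thetastar,i}{}]$ and $\bb{E}[\olamb{\theta,i}{}]$ are finite by Proposition~\ref{prop:bounded_moment_general_lambda} together with Assumptions~\ref{ass:bounded_baseline3} and \ref{ass:moment_g(X)}, while the logarithmic terms are controlled using that $x\mapsto x\log x$ stays bounded near $0$ and is dominated by a first moment at infinity. It is worth stressing that condition~\ref{ass:clinet4} as stated only requires the infimum to be \emph{nonnegative}; the strict non-degeneracy that actually drives the polynomial-type large-deviation estimate behind the moment-convergence theorem is supplied separately through the positive-definiteness of the Fisher information $\Gamma$, itself a consequence of the identifiability Assumption~\ref{ass:identifiability_baseline} exploited in Lemma~\ref{proof:identifiability_weak}. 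In this sense the present lemma isolates the ``easy half'' (the right sign of the contrast), the identifiability assumptions entering the global picture via the other conditions rather than through the sign argument itself.
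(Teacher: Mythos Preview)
Your sign argument via $f(x)=1-x+\log x\le 0$ is correct and is exactly the opening observation in the paper's proof. But you stop too early. Although condition~\ref{ass:clinet4} is written with ``$\geq 0$'', this is a slip in the paper's transcription of the Clinet--Yoshida hypothesis: the non-degeneracy condition driving the polynomial-type large-deviation estimate requires the \emph{strict} bound $\inf_\theta -\bb{Y}(\theta)/\norm{\theta-\thetastar}^2>0$, and the paper's own proof is entirely consistent with that reading. After recalling from Lemma~\ref{proof:identifiability_weak} that $\bb{Y}(\theta)<0$ for $\theta\neq\thetastar$, the paper explicitly states that ``it remains to show that the Fisher matrix is semi-definite and positive'' and devotes the bulk of the argument to proving $v^\top\Gamma v=0\Rightarrow v=0$ via Assumption~\ref{ass:identifiability_baseline}: a continuity/jump dichotomy kills $v_\alpha$, then differentiation kills $v_\beta$, and finally the support condition on $\pi_X$ kills $v_\mu$. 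This is precisely what controls the ratio as $\theta\to\thetastar$, where a second-order expansion of $\bb{Y}$ shows it behaves like $\tfrac12(\theta-\thetastar)^\top\Gamma(\theta-\thetastar)/\norm{\theta-\thetastar}^2$.

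Your closing claim that the positive definiteness of $\Gamma$ is ``supplied separately'' is therefore the gap. Among the listed conditions (M1)--(M4) there is no other slot for it, and Lemma~\ref{proof:identifiability_weak} establishes only that $\bb{Y}$ vanishes uniquely at $\thetastar$, not that $\Gamma\succ 0$. The substantive content of the present lemma---and the place where Assumption~\ref{ass:identifiability_baseline} is actually used in this proof---is the non-degeneracy of $\Gamma$, which your proposal omits entirely.
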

\begin{proof}
In Lemma \ref{proof:identifiability_weak}, we have already proven that the function $\bb{Y}$ is always negative except at $\theta^\star$, where the it value is null. 
To prove \ref{ass:clinet4}, it remains to show that the Fisher matrix is semi-definite and positive, using Assumption \ref{ass:identifiability_baseline}.
The Fisher matrix is given by $\Gamma = \sum_{i=1}^d \bb{E} \qty(\frac{\partial_\theta \olamb{\thetastar,i}{}(t) \partial_\theta \olamb{\thetastar,i}{}(t)^T }{\olamb{\thetastar,i}{}(t)})$.

Let us consider a vector \(v:= (v_i)_i \), where for each \(i \in \{1, \dots, d\} \), the vector \(v_i \in \bb{R}^{q_i} \) has the same dimension and structure as the parameter block \(\theta_i = (\mu_i, (\alpha_{ij})_j, (\beta_{ij})_j) \). More precisely, each \(v_i \) can be written as:
\[
v_i = (v_{\mu_i}, (v_{\alpha_{ij}})_j, (v_{\beta_{ij}})_j),
\]
where \(v_{\mu_i} \in \bb{R}^{d_i} \), \(v_{\alpha_{ij}} \in \bb{R} \), and \(v_{\beta_{ij}} \in \bb{R} \) for all \(j = 1, \dots, d \).

Suppose now that the quadratic form satisfies:
\[
v^\top \Gamma v = 0.
\]
As $v^T \Gamma v = \bb{E}\qty(v^T_i \frac{\partial_\theta \olamb{\thetastar,i}{}(t) \partial_\theta \olamb{\thetastar,i}{}(t)^T }{\olamb{\thetastar,i}{}(t)} v_i)  = \bb{E}\qty(\frac{ \norm{ \partial_\theta \olamb{\thetastar}{}(t)^T v}^2_2 }{ \partial_\theta \olamb{\thetastar,i}{}(t) }) $, it implies that for all $i$, $\partial_\theta \olamb{\theta,i}{}(t) v^T=0$ $\bb{P}-a.s$, hence $\ps{},$ we have that for all $t$,

\begin{equation}
\label{eq:FisherMat}
    \nabla_\mu \bas{\mu}{i}(\ov{X}(t)) v_{\mu_i}^T +  \sum_{j=1}^d v_{\alpha_{ij}} \int_{(-\infty, t) }  e^{-\beta_{ij}(t-u)}d\ov{N}_j(u) - v_{\beta_{ij}} \alpha_{ij} \int_{(-\infty, t) }(t-u) e^{-\beta_{ij}(u-s)}d\ov{N}_j(u)= 0.
\end{equation}

The function $ t \mapsto \nabla_\mu \bas{\mu}{i}(\ov{X}(t)) v_{\mu_i}^T - \sum_{j=1}^d v_{\beta_{ij}} \alpha_{ij} \int_{(-\infty, t) } (t-u) e^{-\beta_{ij}(t-u)}d\ov{N}_j(u) $ is continuous, whereas the term $\sum_{j=1}^d v_{\alpha_{ij}} \int_{(-\infty, t) }  e^{-\beta_{ij}(t-u)}d\ov{N}_j(u)$ is a jumping process, with discontinuities of size $ v_\alpha $. Thus, for the left-hand side of the equation to be continuous almost surely, it is necessary that $ v_\alpha = 0 $.
As a result, using Equation \eqref{eq:FisherMat}, 
$$\nabla_{\mu} \,\bas{\mu}{i}(\ov{X}(t)) v_{\mu_i}^T - \sum_{j=1}^d v_{\beta_{ij}} \alpha_{ij} \int_{(-\infty, t) }(t-u) e^{-\beta_{ij}(t-u)}d\ov{N}_j(u) =0 \quad \ps{}$$
which, using the derivation w.r.t the parameter $\alpha$, ensures $v_\beta=0$.
Finally, as $\pi_X\qty((\nabla_\mu \bas{\mu}{i})^{-1}\qty(\bb{R}^\star))>0$, we can conclude that $v_{\mu_i} =0$.

\end{proof}

\subsection{Proof of Theorem \ref{theo:gof_corrige}}
\label{proof:BaarsTest}
Let us recall the notations 
$$
E_i := \Lamb{\thetastar}{}(T_i) - \Lamb{\thetastar}{}(T_{i-1}) \sim \mathcal{E}(1),
\quad
\w{E}_i(T) := \Lamb{\wMLE}{}(T_i) - \Lamb{\wMLE}{}(T_{i-1}) + \frac{1}{\sqrt{T}} (T_i - T_{i-1}) \, \widetilde{\rho}_T \, \widetilde{I}_T \, \widetilde{X}.
$$
We proceed in three steps:

\begin{enumerate}
    \item First, we show that for any $i$, $ \w{E}_i(T)  \xrightarrow[T \to \infty]{\s{L}} \s{E}(1)$.
    \item Next, we show that for any $k$, $\qty(\w{E}_1(T), ... , \w{E}_k(T)) \xrightarrow[T \to \infty]{\s{L}} \bigotimes_{j=1}^k \s{E}(1)$.
    \item Finally, we show that $(\w{E}_i(T))$ is tight. Specifically, we show that there exist $M_i$ such that
    \[
    \sum_i \sup_T \bb{P}\qty( \vert\w{E}_i(T) \vert > M_i ) < \infty.
    \]
\end{enumerate}

\paragraph{Step 1} We know that
\begin{align*}
\Lamb{\wMLE}{}(\T{i}{}) -\Lamb{\wMLE}{}(\T{i-1}{}) & = \Lamb{\thetastar}{}(\T{i}{}) -\Lamb{\thetastar}{}(\T{i-1}{})+ \frac{1}{\sqrt{T}} \Delta \T{i}{}  \rho \Gamma^{-1/2} X  + o_{\bb{P}}\qty( \frac{1}{\sqrt{T}} ),
\end{align*}
and hence
\begin{equation}
\label{cv:theobaars}
\Lamb{\wMLE}{}(\T{i}{}) -\Lamb{\wMLE}{}(\T{i-1}{}) - E_i - \frac{1}{\sqrt{T}}\Delta \T{i}{} \rho \Gamma^{-1/2} X =  + o_{\bb{P}}\qty( \frac{1}{\sqrt{T}} ).
\end{equation}

For any $\eta>0$,
\begin{align*}
\bb{P}\qty( \norm{ \w{E}_i(T) -  E_i}>\eta) & \leq \bb{P}\qty( \norm{\Lamb{\wMLE}{}(\T{i}{}) -\Lamb{\wMLE}{}(\T{i-1}{}) -   \Lamb{\thetastar}{}(\T{i}{}) -\Lamb{\thetastar}{}(\T{i-1}{})  + \frac{1}{\sqrt{T}}\Delta \T{i}{} \rho \Gamma^{-1/2} X }> \eta/2 ) \\
& + \bb{P}\qty( \norm{   \frac{1}{\sqrt{T}}\Delta \T{i}{}\qty( \rho \Gamma^{-1/2} X  -  \widetilde{\rho}_T \widetilde{I}_T \widetilde{X} )} >\eta/2 ).
\end{align*}

The first term tends to 0 according to Equation \eqref{cv:theobaars}. For the second term, we have
\begin{align*}
    &\bb{E}\qty( \norm{   \frac{1}{\sqrt{T}}\Delta \T{i}{}\qty( \rho \Gamma^{-1/2} X  -  \widetilde{\rho}_T \widetilde{I}_T \widetilde{X} )} ) \\
    & \quad  = \frac{1}{\sqrt{T}}\bb{E}\Delta \T{i}{} \times \qty[ \bb{E}\qty( \norm{\rho \Gamma^{-1/2}} ) \bb{E}\qty(\norm{X}) +  \bb{E}\qty( \norm{\widetilde{\rho}_T \widetilde{I}_T} ) \bb{E}\qty(\norm{\widetilde{X}}) ],
\end{align*} 
where the equality holds since $(\T{i}{})$ are independent of $X,\widetilde{X}$, of $\widetilde{\rho}_T, \widetilde{I}_T$ (which depend only on $\wN{}{}$), and of $\rho,\Gamma$ (also independent of $N$). 

Under the assumptions of moment convergence for the MLE, $\bb{E}\qty(\norm{\widetilde{\rho}_T}^2)$ and $\bb{E}\qty(\norm{ \widetilde{I}_T }^2)$ are bounded (see Proposition \ref{prop:stationary_intensity}). We thus conclude that for all $i$, $\w{E}_i(T)$ converges to $E_i$, hence to an exponential distribution with rate 1.

\paragraph{Step 2.} Now we show that $\qty(\w{E}_1(T), ... , \w{E}_k(T)) \xrightarrow[T \to \infty]{\s{L}} \bigotimes_{j=1}^k \s{E}(1)$.

We proceed by induction. First, we show that for any $k<l$, $(\w{E}_k(T),\w{E}_l(T))\xrightarrow[T \to \infty]{\s{L}}  \s{E}(1) \bigotimes \s{E}(1) $.

Let $t_k, t_l \in \bb{R}$,
\begin{align*}
    \bb{E}\qty( e^{i t_l \w{E}_l(T) + i t_k \w{E}_k(T)}) &= \bb{E}\qty( \bb{E}\qty( e^{it_l \w{E}_l(T) } e^{it_k \w{E}_k(T) } \big| (\wT{j}{}),(\T{j}{})_{j\leq l-1}  )).
\end{align*}

For all $j < l$, $\Delta T_l \indep T_j \vert (\T{j}{})_{j\leq l-1}$, and since we also condition on $(\wT{j}{})$ and $\widetilde{X}$, $\w{E}_l(T)$ and $\w{E}_k(T)$ depend only on the inter-arrival times. Therefore,
\begin{align*}
  \bb{E}\qty( e^{i t_l \w{E}_l(T) + i t_k \w{E}_k(T)}) & =  \bb{E}\qty( \bb{E}\qty( e^{it_l \w{E}_l(T) } \big| (\wT{j}{}),(\T{j}{})_{j\leq l-1}, \widetilde{X} ) \bb{E}\qty( e^{it_k \w{E}_k(T) } \big| (\wT{j}{}),(\T{j}{})_{j\leq l-1}, \widetilde{X} )) \\
  & = \bb{E}\qty( e^{it_k \w{E}_k(T) } \bb{E}\qty(  e^{it_l \w{E}_l(T)}  \big| (\wT{j}{}),(\T{j}{})_{j\leq l-1}, \widetilde{X} )  ).
\end{align*}

Since $\w{E}_l(T)$ converges in probability to $E_i$, by the mapping theorem $e^{it_l \w{E}_l(T)}$ converges in probability to $e^{it_l E_l}$, and as $\norm{e^{it_l \w{E}_l(T)}} <1$, we have convergence in $\s{L}^1$. Similar reasoning apply for $e^{it_k \w{E}_k(T)}$ which lead to
\[
\bb{E}\qty( e^{i t_l \w{E}_l(T) + i t_k \w{E}_k(T)}) \xrightarrow[T\to \infty]{\s{L}^1} \bb{E}\qty(e^{it_k E_k }) \bb{E}\qty(  e^{it_l E_l}  ).
\]
Thus, $(\w{E}_k(T),\w{E}_l(T))\xrightarrow[T \to \infty]{\s{L}}  \s{E}(1) \bigotimes \s{E}(1)$. By induction, for any $k>0$, $\qty(\w{E}_1(T), ... , \w{E}_k(T)) \xrightarrow[T \to \infty]{\s{L}} \bigotimes_{j=1}^k \s{E}(1)$.

\paragraph{Step 3.} We show that there exist $(M_i)$ such that
\[
\sum_i \sup_T \bb{P}\qty( \vert \w{E}_i(T) \vert > M_i ) < \infty.
\]

We show that $\sup_T \bb{E}\qty( \vert  \w{E}_i(T) \vert )  \leq c_i$ for some $c_i>0$. Indeed,
\begin{equation}
\label{eq:borne_supEi}
\bb{E}\qty(  \vert  \w{E}_i(T) \vert) \leq \bb{E}\qty(\Delta_i \Lamb{\wMLE}{}) +\frac{1}{\sqrt{T}} \bb{E}\qty(\Delta \T{i}{}   \norm{\widetilde{\rho}_T \widetilde{I}_T \widetilde{X}} ).
\end{equation}

The second term tends to 0 since $\bb{E}\qty(\norm{\widetilde{\rho}_T \widetilde{I}_T})$ is bounded and $\bb{E}\qty(\Delta T_i) < \infty$, so there exists $c_i^1$ such that
\[
\frac{1}{\sqrt{T}} \bb{E}\qty(\Delta \T{i}{}   \norm{\widetilde{\rho}_T \widetilde{I}_T \widetilde{X}} ) \leq c_i^1.
\]

For the first term in \eqref{eq:borne_supEi}, conditioning on $(\wT{j}{})$, $\lamb{\wMLE}{}$ is a process with mean $\widetilde{\mu}_T + \widetilde{K} \qty(Id - K^\star )^{-1} \mustar$.

Thus,
\begin{align*}
    \bb{E}\qty( \Lamb{\wMLE}{}(T_i) - \Lamb{\wMLE}{}(T_{i-1}))  & = \bb{E}\qty( \Delta T_i \qty(\widetilde{\mu}_T + \widetilde{K}_T \qty(Id - K^\star )^{-1} \mustar )) \\
    & = \bb{E}\qty( \Delta T_i ) \bb{E}\qty( \widetilde{\mu}_T + \widetilde{K}_T \qty(Id - K^\star )^{-1} \mustar ),
\end{align*}
which is bounded. Hence $\sup_T \bb{E}\qty( \Lamb{\wMLE}{}(T_i) - \Lamb{\wMLE}{}(T_{i-1}) )  < + \infty$.

This concludes the third step, and thus the proof.

\appendixheaderon
\section{Proof of Proposition \ref{prop:bounded_moment_general_lambda}}
\label{proof:bounded_moment_general_lambda}

To prove this Proposition, we start by proving the result for $\lamb{\thetastar}{}(t)$, the true intensity of the process and then extend it all $\theta \in \Theta$.

\subsection{First result for the true parameter}

\begin{lemme}
\label{lemma:bounded_moment_lambdastar}
For all $l \geq 1$ and all $i\in\{1\ldots d \}$, $\sup_{t \in \bb{R}} \bb{E}\qty(\abs{\lamb{\thetastar,i}{}(t)}^l ) < + \infty $.
\end{lemme}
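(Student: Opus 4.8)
The plan is to prove the bound directly from the structure of $\lamb{\thetastar}{}$, rather than by invoking Proposition~\ref{prop:bounded_moment_general_lambda}: since $\lamb{\thetastar,i}{}$ is precisely the intensity governing the jumps of $N$, its moment bound is self-referential and constitutes the genuine base case on which the general Proposition later rests (invoking the Proposition here would be circular). The two inputs are the uniform upper bound $\bastar{\mu}{i}(X(t^-)) \le \bas{\mustar}{+}$ furnished by Assumption~\ref{ass:bounded_baseline2}, and the subcriticality $\rho(K) < 1$ of Assumption~\ref{ass:sta_hawkes}. I would argue by induction on the moment order $l$.

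For the base case $l = 1$, I would take expectations in the definition of $\lamb{\thetastar,i}{}(t)$ and use $\bb{E}(d\N{j}{}(s)) = \bb{E}(\lamb{\thetastar,j}{}(s))\,ds$ to obtain the vectorial renewal inequality
\[
\bb{E}(\lamb{\thetastar,i}{}(t)) \le \bas{\mustar}{+} + \sum_{j=1}^d \int_0^t \alphastar_{ij} e^{-\betastar_{ij}(t-s)} \, \bb{E}(\lamb{\thetastar,j}{}(s)) \, ds .
\]
Writing $f(t) := (\bb{E}(\lamb{\thetastar,i}{}(t)))_i$, this reads $f \le \bas{\mustar}{+}\mathbf{1} + (\alphastar e^{-\betastar\cdot}) \star f$, whose resolvent series $\sum_{n\ge1} (\alphastar e^{-\betastar\cdot})^{\star n}$ converges in $L^1$ precisely because its total-mass matrix is $K$ with $\rho(K) < 1$; this yields $\sup_{t} \bb{E}(\lamb{\thetastar,i}{}(t)) < \infty$.

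For the inductive step, assuming uniform bounds on all moments up to order $l-1$, I would split the baseline and self-exciting parts using the elementary inequality $(x+y)^l \le (1+z)^{l-1}x^l + (1+z^{-1})^{l-1}y^l$ recalled after Proposition~\ref{prop:bounded_moment_general_lambda}; the baseline term is controlled by $\bas{\mustar}{+}^l$. For the integral part I would decompose $d\N{j}{} = d\widetilde{M}_j + \lamb{\thetastar,j}{}\,ds$ into its compensated martingale and drift, then apply Kunita's inequality (Lemma~A.2 of \cite{clinet2017statistical}) to the martingale contribution to dominate its $l$-th moment by a finite combination of lower-order moments — bounded by the induction hypothesis — plus a term linear in $\bb{E}(\lamb{\thetastar,j}{}(s)^l)$, while the drift contribution produces the renewal term $\int \alphastar_{ij} e^{-\betastar_{ij}(t-s)}\, \bb{E}(\lamb{\thetastar,j}{}(s)^l)\,ds$. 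Collecting terms gives a linear integral inequality for $t \mapsto \bb{E}(\lamb{\thetastar,i}{}(t)^l)$ with a bounded forcing term and the same kernel $\alphastar e^{-\betastar\cdot}$, so the same resolvent argument closes the induction.

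The main obstacle is the inductive step: one must apply Kunita's inequality carefully so that the $l$-th moment of $\lamb{\thetastar}{}$ reappears on the right-hand side only against the kernel $\alphastar e^{-\betastar\cdot}$, whose $L^1$-mass matrix $K$ is subcritical, so that this self-referential term can be absorbed and the renewal inequality admits a finite, time-uniform solution. Keeping the constants uniform in $t$ throughout — rather than letting them accumulate over the recursion — is the delicate bookkeeping that makes the $\sup_t$ statement hold.
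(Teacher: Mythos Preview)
Your outline follows the same inductive architecture as the paper's proof --- induction on the moment order $l$, the convexity split $(x+y)^l \le (1+z)^{l-1}x^l + (1+z^{-1})^{l-1}y^l$, the compensated-martingale decomposition $d\N{j}{} = d\widetilde{M}_j + \lamb{\thetastar,j}{}\,ds$, and Kunita's inequality for the martingale part --- but the paper closes the self-referential loop differently. Rather than working directly with $\lamb{\thetastar}{}$ and invoking a renewal/resolvent argument in $t$, it works with the thinning approximants $\lamb{}{(n)}$ from Theorem~\ref{theo:existence}: one derives $\sup_t \bb{E}(|\lamb{i}{(n+1)}(t)|^l) \le C + c\sum_j \sup_t \bb{E}(|\lamb{j}{(n)}(t)|^l)$ with $c<1$ (after choosing $z$ large), obtains an arithmetico-geometric recursion in $n$ with a bound uniform in $n$, and then passes to the limit by dominated convergence.

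The advantage of the paper's route is that each $\lamb{}{(n)}$ is a priori finite (built from finitely many jump layers over a bounded baseline), so all expectations are finite and the manipulations are justified without further argument. Your direct approach is algebraically cleaner, but it implicitly assumes $\bb{E}(\lamb{\thetastar,i}{}(t)^l) < \infty$ before writing down the renewal inequality --- otherwise the inequality reads $\infty \le \infty$ and the resolvent argument is vacuous. You would need a localization or truncation step (which in effect reproduces the thinning approximants) to make this rigorous. One further minor point: after H\"older the drift term carries the kernel $\|h_{ij}\|_1^{l-1}\, h_{ij}$ together with the prefactor $(1+z^{-1})^{l-1}$, not just $h_{ij}$, so your ``same kernel'' claim needs these constants absorbed via the free parameter $z$; this is precisely how the paper drives the arithmetico-geometric ratio below $1$.
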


\begin{proof} Even though we consider the particular set up of a process with an exponential kernel, this property can be proven in much more general set-up where the kernel function is an integrable functions. We thus keep for this specific proof the general notation, i.e we consider $\lamb{\thetastar,i}{}(t) = \bastar{\mu}{i}(X(t)) + \sum_{j=1}^d \int_{(0,t)} \hstar{i,j}(t-s)d\N{j}{}(s)$ and suppose that this function verifies:

\begin{enumerate}
    \item $\rho \qty( K) < 1 $ where $K =\qty(\norm{\hstar{ij}}_{\s{L}^1})_{ij}$ 
    \item For all $i,j  \indexset{1}{d} $, $\hstar{ij} \in \s{L}^q$ for a $q \in \bb{N}\backslash \{0,1\}$.
\end{enumerate}
Under this hypothesis, we will show that for all $l\in [1,q]$, $\sup_{t \in \bb{R}} \bb{E}\qty(\abs{\lamb{\thetastar}{}(t)}^l ) < + \infty$. As the exponential kernel is in $\s{L}^p$ for all $p >1$, proving this more general lemma will demonstrate the expected result.

The proof is done using recurrence on $l \in \llbracket 1, p \rrbracket$.
Let us recall that the thinning procedure to obtain $\lamb{\thetastar,i}{}(t) $ is given by the following set of equations.
    \begin{align*}
     & \lamb{i}{(n+1)}(t) = \bastar{\mu}{i}(X(t^-))+\sum_{j=1}^d \int_{(0,t)} \hstar{ij}(t-s)d\N{j}{}(s) \\
     & d\N{i}{(n+1)}(t) =  \overline{N} \left( \left[ 0, \lamb{i}{(n+1)}(t) \right] \times dt \right),
\end{align*}

with $\lamb{}{(0)}= 0$ and $\N{}{(0)} = \emptyset$.
Using an induction, we show the following property:
$$ H(k): \exists C_k \text{ such that for all } n \in \bb{N} \text{ for all t, } \text{ for all } i \quad \bb{E} \left(\left| \lamb{i}{(n)}(t)\right|^k\right) \leq C_k.$$

\underline{We start by proving $H(1)$:}
The first step consists in proving that $\bb{E}\qty( \vert \lamb{i}{(n)}(t) -\lamb{i}{ (n-1)}(t) \vert ) \leq g_+ K^{n-1}\indic_p$, to finally deduce that $$\bb{E}\qty( \lamb{i}{ (n)}(t)) \leq g_+\sum_{k=1}^d K^{k-1}\indic_p.$$
where \(\mathbf{1}_p\) denotes the vector in \(\mathbb{R}^p\) with all entries equal to 1. Let us start by studying $\bb{E}\qty( \vert \lamb{i}{(n)}(t) -\lamb{i}{ (n-1)}(t) \vert )$

\begin{align*}
&\bb{E}\left( \left| \lamb{i}{(n)}(t) - \lamb{i}{(n-1)}(t) \right| \right)\\
&\leq \sum_{k_n=1}^d \int_{0}^{t^{-}} \hstar{i k_n}(t - s_n) \, \bb{E}\left( \left| \lamb{k_n}{(n-1)}(s_n) - \lamb{k_n}{(n-2)}(s_n) \right| \right) ds_n \\
&\leq \sum_{k_n=1}^d \int_{0}^{t} \hstar{i k_n}(t - s_n) \left[ \sum_{k_{n-1}=1}^d \hstar{k_n k_{n-1}}(s_n - s_{n-1}) \, \bb{E}\left( \lamb{k_{n-1}}{(n-2)}(s_{n-1}) \right) ds_{n-1} \right] ds_n \\
&= \sum_{k_n=1}^d \int_{0}^{t} \hstar{i k_n}(t - s_n) \sum_{k_{n-1}=1}^d \int_{0}^{s_n} \hstar{k_n k_{n-1}}(s_n - s_{n-1}) \, \bb{E}\left( \lamb{k_{n-1}}{(n-2)}(s_{n-1}) \right) ds_{n-1} ds_n \\
&\leq \sum_{k_n=1}^d \int_{0}^{t} \hstar{i k_n}(t - s_n) \sum_{k_{n-1}=1}^d \int_{0}^{s_n} \hstar{k_n k_{n-1}}(s_n - s_{n-1}) .... \sum_{k_1=1}^d \int_{0}^{s_3} \hstar{k_3k_2}(s_3-s_2)\, \bb{E}\left( \lamb{k_{2}}{(0)} (s_{2})\right)ds_2 \ldots ds_n \\
&\leq g_+ \sum_{k_n=1}^d \sum_{k_{n-1}=1}^d \ldots \sum_{k_1=1}^d K_{ik_n} K_{k_nk_{n-1}}\ldots K_{k_3k_2} \\
& = (K^{n-1}\indic_p)
\end{align*}

As a result, \[
\bb{E}\qty( \lamb{i}{(n)})(t) \leq \sum_{k=1}^d \bb{E}\left( \left| \lamb{i}{(k)}(t) - \lamb{i}{(k-1)}(t) \right| \right) + \bb{E}\qty( \lamb{i}{(1)}(t) ) \leq  g_+ +\sum_{k=1}^n \qty( K^{k-1}\indic_p)_i \leq g_+ \qty( 1- K)^{-1}\indic_p
\]

This shows the initialisation step.

\vspace{\baselineskip}

\underline{We now suppose that $H(l-1)$ is verified for some $l-1 \geq 1 $.} 

We use the following inequality: for all $x,y,z>0, p\geq1: (x+y)^p \leq (1 + z)^{p-1} x^p + ( 1 + z^{-1} ) ^{p-1} y^p$, which implies (as $l-1>1$), 

\begin{align*}
    \bb{E} \left(\left|\lamb{i}{(n+1)}(t)\right|^k\right) & \leq \qty(1 + z)^{l-1}\qty(\bastar{\mu}{i}(X_{t^-}))^l + ( 1 + z^{-1} )^{l-1}\qty( \sum_{j=1}^d \int_{(0,t)}\hstar{ij}(t-s)d\N{}{(n)}(s))^l. \\
    & \leq \qty(1 + z)^{l-1}\qty(\bastar{\mu}{i}(X_{t^-}))^l + ( 1 + z^{-1} )^{l-1} \sum_{j=1}^d \qty( \int_{(0,t)}\hstar{ij}(t-s)d\N{}{(n)}(s))^l
\end{align*}

Denoting $\oN{j}{(n)}(t)=\N{j}{(n)}(t)-\int_{(0, t)} \lamb{j}{(n)}(s) ds$ we have:
\begin{align*}
    & \bb{E}\left[\left(\int_{(0, t)}\hstar{ij}(t-s) d\N{j}{(n)}(s)\right)^l\right]\\ 
    = &  \bb{E}\left(\left(\int_{(0, t)}\hstar{ij}(t-s) d\oN{j}{(n)}(s)+\int_{(0, t)}\hstar{ij}(t-s) \lamb{j}{(n)}(s)ds\right)^l\right) \\
\leq & \left(1+z\right)^{l-1} \bb{E}\left(\left(\int_{(0, t)}\hstar{ij}(t-s) d\oN{j}{(n)}(s)\right)^l\right)+(1+z^{-1})^{l-1} \bb{E}\left(\left(\int_{(0, t)}\hstar{ij}(t-s) \lamb{j}{(n)}(s) ds\right)^l\right).
\end{align*}

And using the Kunita inequality ensures that there exists $K_l$ such that 

\begin{align*}
\bb{E}\left[\left(\int_{(0, t)}\hstar{ij}(t-s) d\oN{j}{(n)}(s)\right)^l\right] & \leq K_l \bb{E}\left(\int_{(0, t)}\hstar{ij}(t-s)^l \lamb{j}{(n)}(s) ds \right)\\
& \quad +K_l \bb{E}\left(\left(\int_{(0, t)}\hstar{ij}(t-s)^2 \lamb{j}{(n)}(s) ds\right)^{l/2}\right)
\end{align*}

As a result, we have shown the following inequality.
\begin{align*}
    \bb{E}\left(\left|\lamb{i}{(n+1)}(t)\right|^l\right) & \leqslant\left(1+z\right)^{l}g_{+}^l \\
    & + \left(1+z^{-1}\right)^{2(l-1)} K_l \sum_{j=1}^d \bb{E}\left[\int_{(0, t)}\hstar{ij}^l(t-s) \lamb{j}{(n)}(s) ds\right] \\
& +\left(1+z^{-1}\right)^{2(l-1)} K_l \sum_{j=1}^d \bb{E}\left[\left(\int_{(0, t)}\hstar{ij}(t-s) \lamb{j}{(n)}(s) d s\right)^{l / 2}\right] \\
& +(1+z^{-1})^{l-1} \sum_{j=1}^d \bb{E}\left[\left(\int_{(0, t)}\hstar{ij}(t-s) \lamb{j}{(n)}(s) ds\right)^l\right]
\end{align*}

Furthermore, $\bb{E} \qty[ \int_{(0, t)}\hstar{ij}(t-s)^l\lamb{j}{(n)}(s)ds ]  \leq \norm{\hstar{ij}}_l^l \sup_{t \in \bb{R}}\bb{E}\qty[\lamb{j}{ (n)}(t)] \leq  g_+  \norm{ \qty(1 - K)^{-1}} \norm{\hstar{ij}}_l^l$

And, taking $ f = \hstar{ij}^{1- 2/l}$, $g =  \hstar{ij}^{2/l} \times \lamb{j}{(n)} $ ,  $p = l/2$ and $q = \qty( 1-2/l)^{-1}$ Hölder's inequality ensures:
\begin{align*}
    \bb{E}\left[\left(\int_{(0, t)}\hstar{ij}(t-s) \lamb{j}{(n)} d s\right)^{l / 2}\right] & \leq  \qty( \int_{(0,t)}\hstar{ij}^2(t-s) ds)^{l/2}  \bb{E} \qty(  \int_{(0,t)}\hstar{ij}^2(t-s)\qty(\lamb{j}{(n)})^{l/2} ds )  \\
    & \leq \norm{\hstar{ij}}_2^{l+2} \sup_{t \in \bb{R}} \bb{E}\qty( \qty(\lamb{j}{(n)}(s))^{l/2}) \\ 
    & < \norm{\hstar{ij}}_2^{l+2} \max(1,C_{l-1})
\end{align*}
where the second and third inequalities are due to the recursive hypothesis and the fact that $h \in \s{L}^p$ for $p \geq 2$. Using once again Hölder inequality with $p = l$ and $q = \qty(1-1/l)^{-1}$ we have,
$$\bb{E}\left[\left(\int_{(0, t)}\hstar{ij}(t-s) \lamb{j}{(n)}(s) d s\right)^l\right] \leq \norm{\hstar{ij}}_1 \int_{(0,t)} \hstar{ij}(t-s) \bb{E} \qty(\left| \lamb{j}{(n)}(s)\right|^l) ds.$$

 Combined with the previous inequality, this gives: $$\bb{E}\left(\left|\lamb{j}{(n+1)}(t)\right|^l\right) \leq K + (1 + z^{-1})^{l-1} \norm{\hstar{ij}} \int_{(0,t)}\hstar{ij}(t-s)\bb{E} \qty(\left| \lamb{j}{(n)}(s)\right|^l) ds,$$ where $K$ is a constant independent of t.

As a result $\bb{E}\left(\left|\lamb{i}{(n+1)}(t)\right|^l\right) \leq K + (1 + z^{-1})^{l-1} \sum_{j=1}^d \norm{\hstar{ij}}^2 \sup_{t\in \bb{R} }\bb{E}\left(\left|\lamb{j}{(n)}(t)\right|^l\right)  $ implying that $$\sup_{t\in \bb{R} }\bb{E}\left(\left|\lamb{i}{(n+1)}(t)\right|^l\right)\leq K +  (1 + z^{-1})^{l-1} \sum_{j=1}^d\norm{\hstar{ij}}^2 \sup_{t\in \bb{R} }\bb{E}\left(\left|\lamb{j}{(n)}(t)\right|^l\right).$$ 

As $\bb{E}\left(\left|\lamb{j}{(1)}(t)\right|^l\right) \leq  g_+ < +\infty$ we thus deduce that for all $n \geq 1$, $\bb{E}\left(\left|\lamb{i}{(n)}(t)\right|^l\right) < + \infty$ and that $\qty(\bb{E}\left(\left|\lamb{j}{(n)}(t)\right|^l\right))_{n\geq 1}$ is bounded above by an arithmetico-geometric sequence $(v_n)$ with having ratio $(1 + z^{-1})^{l-1} \norm{h_{\theta_0}}^2<1$. As a result $(v_n)$ is bounded. In addition, the bounding constant depends only on the parameter defining the arithmetico-geometric sequence, implying that the bound does not depend on either $n$ or $t$.

The dominated convergence theorem ensures the desired result: $\sup_{t \geq 0 }\bb{E}\qty( \qty(\lamb{i}{}(t))^l )< +\infty $.

\end{proof}

\subsection{Proof of the state Lemma}

Let us take $l \geq 1 $ with $l \in \llbracket 1, p \rrbracket$.
Using that for all $x,y,z >0$ and all $ p \geq 1$, $(x+y)^l < (1+z)^{l-1}x^l + ( 1 + z^{-1})^{l-1}y^l$, we have

\begin{align*}
    \bb{E} \qty(\abso{\widetilde{\lambda}(t)}^l) & \leq \qty(1 + z)^{l-1}\abso{\widetilde{m}(t)}^l + ( 1 + z^{-1} )^{l-1}\qty( \sum_{j=1}^d \int_{(0,t)}\widetilde{h}_j(t-s)d\N{j}{}(s))^l\\
    &  \leq \qty(1 + z)^{l-1}\abso{\widetilde{m}(t)}^l + ( 1 + z^{-1} )^{l-1} \sum_{j=1}^d \qty(\int_{(0,t)}\widetilde{h}_j(t-s)d\N{j}{}(s))^l
\end{align*}

Furthermore, denoting  $\overline{N}_{j}(t)=\N{j}{}(t)-\int_{(0, t)} \lamb{j}{}(s)ds$ we obtain the following inequality.
\begin{align*}
    & \bb{E}\left[\left(\int_{(0, t)} \widetilde{h}_j(t-s) \N{j}{}(d s)\right)^l\right]\\ 
    = &  \bb{E}\left(\left(\int_{(0, t)}\widetilde{h}_j(t-s) d\overline{N}_j(s)+\int_{(0, t)} \widetilde{h}_j(t-s) \lamb{j}{}(s)ds\right)^l\right) \\
\leq & \left(1+z\right)^{l-1} \bb{E}\left(\left(\int_{(0, t)} \widetilde{h}_j(t-s) d\overline{N}_j(s)\right)^l\right)+(1+z^{-1})^{l-1} \bb{E}\left(\left(\int_{(0, t)} \widetilde{h}_j(t-s) \lamb{j}{}(s) ds\right)^l\right)
\end{align*}
And the Kunita inequality ensures that there exists $K_l$ such that $$\bb{E}\left[\left(\int_{(0, t)} \widetilde{h}_j(t-s) d\overline{N}_j(s)\right)^l\right] \leq K_l \bb{E}\left(\int_{(0, t)} \widetilde{h}_j(t-s)^l \lamb{j}{}(s) ds \right)+K_l \bb{E}\left(\left(\int_{(0, t)}\widetilde{h}_j(t-s)^2 \lamb{j}{}(s) ds\right)^{l/2}\right).$$

As a result we have shown the following inequality.
\begin{align*}
    \bb{E}\qty(\abso{\widetilde{\lambda}(t)}^l) & \leq\left(1+z\right)^{l}\abso{\widetilde{m}(t)}^l \\
    & (1+ z^{-1})^{l-1}(1+z)^{l-1} K_l \sum_{j=1}^d \bb{E}\left[\int_{(0, t)} \widetilde{h}_j^l(t-s) \lamb{j}{}(s) ds\right] \\
& +(1+ z^{-1})^{l-1}(1+z)^{l-1} K_l \sum_{j=1}^d \bb{E}\left[\left(\int_{(0, t)} \widetilde{h}_j(t-s) \lamb{j}{}(s) d s\right)^{l / 2}\right] \\
& +(1+z^{-1})^{l-1} \sum_{j=1}^d \bb{E}\left[\left(\int_{(0, t)} \widetilde{h}_j(t-s) \lamb{j}{}(s) ds\right)^l\right]
\end{align*}
Using Holder inequality we thus have, 
\begin{align*}
     \sup_t \bb{E}\qty(\abso{\widetilde{\lambda}(t)}^l) & \leq \left(1+z\right)^{l}\sup_{t\in\bb{R}} \abso{\widetilde{m}(t)}^l  + K_l (1+ z^{-1})^{l-1}(1+z)^{l-1}\norm{h}_l^l \sum_{j=1}^d \sup_{t\in\bb{R}} \bb{E}\qty(\abso{ \lamb{j}{}(s)}) \\ 
    &  + (1+ z^{-1})^{l-1}(1+z)^{l-1} \sum_{j=1}^d \sup_{t \in \bb{R}} \norm{\widetilde{h}_j}_1 \bb{E}\qty( \abso{ \lamb{j}{}(t)}^{l/2}) \\
    & +(1+z^{-1})^{l-1}  \sum_{j=1}^d \norm{\widetilde{h}_j}_1  \sup_{t \in \bb{R}}\bb{E}\qty( \abso{ \lamb{j}{}(t)}^{l}) \\
    & < + \infty,
\end{align*}
where the last inequality is due to Lemma \ref{lemma:bounded_moment_lambdastar} and the fact that $\widetilde{h}_j\in \s{L}^q \supseteq \s{L}^l$ for all $1 \leq l \leq q$. This concludes the proof. 

\appendixheaderon
\section{Proof of Lemma \ref{lemme:strong_stationnarity}}
\label{appendix:proof_stability}
We first note that under Assumption \ref{ass:bounded_baseline3}, the constant $C_\theta$ defined in Propostion \ref{prop:stationary_intensity}, is independant of $\theta.$
As a result, 
\[ \bb{E}\qty( \norm{\lamb{\theta,i}{}(t) - \olamb{\theta,i}{}})\le C e^{-ct }.\]
Then, for all $\phi \in D(E,\bb{R})$,
\begin{align*}
t^\gamma \bb{E}\qty[\norm{\phi(\widetilde{U}_{\theta,i}(t))-\phi(\ov{U}_{\theta,i}(t))}] &\leq t^\gamma \bb{E}\qty[\norm{\nabla \phi(\xi^\theta(t)). \qty(U_{\theta,i}(t)-\ov{U}_{\theta,i}(t)) }] \\&\leq C t^\gamma \bb{E}\qty[\norm{ U_{\theta,i}(t)-\ov{U}_{\theta,i}(t)}^2] <\infty
\end{align*}
for some $\xi^\theta(t) \in[U_{\theta,i}(t), \ov{U}_{\theta,i}(t)]$, and $C>0$ a constant which does not depend on $\theta$ and $t$ and where the inequality is due to $\sup_t \bb{E}\qty[\sup_\theta \nabla \phi(\xi^\theta(t))]<+ \infty$. 

Thanks to Proposition \ref{lemma:speed_cv_stationnary} and the fact that each term in $ U_{\theta,i}(t) - \ov{U}_{\theta,i}(t) $ can be written as a sum of integrals with respect to the measure $ dN - d\ov{N} $, and a term $\bb{E}\qty(\norm{\bas{\mu}{i}(X(t)) - \bas{\mu}{i} (\ov{X}(t) }) $ we deduce that $ \bb{E} \qty[ \norm{ U_{\theta,i}(t) - \ov{U}_{\theta,i}(t)}^2] $ decreases exponentially uniformly in $ \theta$.

\appendixheaderon
\section{T-chain property for a general d}
\label{proof:Tchaingeneraldim}

We resume the proof of \ref{proof:Tchaindim2} from Equation \eqref{eq:lower_bound_kernel}. As in the case $d=2$, we begin by bounding \(\mathbb{E}_{(x,y)}\big(\mathbf{1}_B(Y(T)) \,\vert\, \mathcal{F}^T_W \big)\) from below.
We thus work on the event
\[
A_T:= \{ T_1 < T_2 < \ldots < T_{d^2} < T < T_{d^{2}+1} \} \cap \{ K_1 = 1, \ldots K_d = 1 , K_{d+1}=2 \ldots..., K_{2d} = 2, \ldots K_{d^2} = d \}.
\]
In other words, we require that each component jumps \(d\) times before time \(T\), 
and that the jumps of $N_1$ occur before those of component $N_2$ which occur before those of $N_3$ and so on.
Then, we have
\begin{align*}
    & \mathbb{E}_{(x,y)}\big(\mathbf{1}_B(Y(T)) \,\vert\, \mathcal{F}^T_W \big) \\
    &\geq \mathbb{E}_{(x,y)}\big(\mathbf{1}_B(Y(T)) \mathbf{1}_{A_T} \,\vert\, \mathcal{F}^T_W \big)\\
    & \int f^{ (\Delta T_{d^{2+1}}, K_{d^2 +1}, \dots, \Delta T_{1}, K_{1})}(t_{d^2 +1 }, k_{d^2+1}, \dots, t_1, k_1) 
    \, \mathbf{1}_{B}(\gamma_y(T)) \indic_{t_1+\ldots +t_{d^2}< T } \indic_{t_1+\ldots +t_{d^2+1} > T }  \, dt_1 \dots dt_{d^2+1},
\end{align*}
where:
\begin{itemize}
    \item \(f^{ (\Delta T_{1}, K_{1}, \dots, \Delta T_{5}, K_{5})}(t_5, k_5, \dots, t_1, k_1)\) 
    is the joint density of the inter-arrival times \((\Delta T_i)\) and the marks \((K_i)\), conditional on the trajectory of \(W\);
    \item \(\gamma_y(t) \in \mathbb{R}^{d^2}\) denotes the value of \(Y(t)\), for \(t \in [0,T]\), conditional on the event
    \(\{ Y(0) = y \} \cap \{ \Delta T_1 = t_1, K_1 = 1, \dots, \Delta T_4 = t_4, K_4 = 2 \}\) and conditional to $W$.
\end{itemize}

\paragraph{Computation of $\gamma_y(T)$.}

Using the recurrence property of  $Y$ we have:
\begin{equation}
\label{eq:kernel_val_generaldim}
\gamma_y(T) = \qty(y_{ij} e^{-\betastar_{ij} T} + \alphastar_{ij} 
\sum_{i=jd+1}^{(j+1)d} \, e^{-\betastar_{ij} (T - t_1 - \dots - t_i)} )_{ij}.
\end{equation}

We denote by $(\gamma_y^{kj}(t))_{1 \leq k,j \leq 2}$ the entries of the matrix $\gamma_y(t)$, and define, for each $k$,
\[
\gamma_y^k(s) := \sum_{j=1}^d \gamma_y^{ij}(s),
\]

\paragraph{Computation of the joint density \(f \).}

We aim to make explicit the joint density \(f \) of the random vector \((\Delta T_{d^2+1}, K_{d^2+1}, \dots, \Delta T_1, K_1)\). Using Bayes' formula, we have:

\[
f^{(\Delta T_{d^2}, K_{d^2}, \dots, \Delta T_1, K_1)} =\prod_{k=0}^{d-1} \; \prod_{i=dk+1}^{(k+1)d} 
   f^{\qty(\Delta T_i, K_i \mid \Delta T_{<i}, K_{<i})}.
\]

where the notation $f^{\qty(\Delta T_i, K_i \mid \Delta T_{<i}, K_{<i})}$ corresponds to the conditional density of the pair $(\Delta T_i, K_i)$ given the previous history.

To compute the conditional terms \(f^{\qty(\Delta T_i, K_i \mid \Delta T_{<i}, K_{<i})} \), we first consider the survival function of the \(i \)-the inter-arrival time, given the past. For all \(i \in \llbracket dk+1, d(k+1) \rrbracket \), we have:

\begin{align*}
\bb{P}(\Delta T_i > t_i \mid \Delta T_{i-1}, \dots, \Delta T_1, K_1, \mathcal{F}_W^T) 
&= \bb{P}(N_k([T_{i-1}, T_{i-1} + t_i]) = 0 \mid \Delta T_{i-1}, \dots, \Delta T_1, K_1, \mathcal{F}_W^T) \\
&= \exp\left(- \int_{T_{i-1}}^{T_{i-1} + t_i} \bastar{\mu}{k}(X(s)) + \gamma_y^{k}(s) \, ds \right),
\end{align*}

where \((\gamma_{y}^{ij}(s))_{ij} \), the coordinates of \(\gamma^T_y(s) \) defined in Equation~\eqref{eq:kernel_val}, represents the value of the Hawkes process \(Y(s) \) conditionally on the Brownian motion, the initial condition \(y \), and the jump times before time \(s \).

Hence, for all \(i\), the conditional density of \(\Delta T_i\) is given by

\[
t \mapsto \Bigl(\bastar{\mu}{k}(X(T_{i-1}+t)) + \gamma_y^k(T_{i-1}+t) \Bigr) 
\exp\Bigg(- \int_{T_{i-1}}^{T_{i-1}+t} \Big[ \bastar{\mu}{k}(X(s)) + \gamma_y^k(s) \Big] ds \Bigg),
\]

 Note that for $i=d^2+1$, $k_{d^2+1}$ plays no role as we have no condition imposed on it.

For the conditional mark density, given \(\Delta T_i, \dots, \Delta T_1, K_1, \mathcal{F}_W^T \), the probability that the event occurs in component \(k \) is:

\[
\bb{P}(K_i = k \mid \Delta T_i, \dots, \Delta T_1, K_1, \mathcal{F}_W^T ) = \frac{\lamb{k}{}(T_i^-)}{\sum_{\ell=1}^d \lamb{l}{}(T_i^-)} 
= \frac{\bastar{\mu}{k}(X(T_i^-)) +  \gamma_y^{\ell}(T_i^-)}{\sum_{\ell=1}^d \left(\bastar{\mu}{l}(X(T_i^-)) + \gamma_y^{\ell}(T_i^-) \right)}.
\]

We can then write the full conditional density \(f^{\qty(\Delta T_i, K_i \mid \Delta T_{<i}, K_{<i})} \) as:

\begin{align*}
& f^{\qty(\Delta T_i, K_i \mid \Delta T_{<i}, K_{<i})}(t_i,k_i, t_{i-1}, k_{i-1}\dots, t_1,k_1) \\
&= 
\frac{\bastar{\mu}{k_i}(X(T_i^-)) + \gamma_y^{k_i}(T_i^-)}
{\sum_{\ell=1}^d \left(\bastar{\mu}{l}(X(T_i^-)) +  \gamma_y^{ l}(T_i^-) \right)} \\
& \times 
\left(\bastar{\mu}{k_i}(X(T_{i-1} + t_i)) + \gamma_y^{k_i}(T_{i-1} + t_i) \right)  \times 
\exp\left(- \int_{T_{i-1}}^{T_{i-1} + t_i} 
\bastar{\mu}{k_i}(X(s)) +  \gamma_y^{k_i}(s) \, ds \right).
\end{align*}

Now, using the fact that the baseline intensities \(\left(\bastar{\mu}{k}(X(t)) \right)_k \) are bounded above and below ( see Assumption \ref{ass:bounded_baseline2}), we can deduce a lower bound:

\begin{align*}
f^{\qty(\Delta T_i, K_i \mid \Delta T_{<i}, K_{<i})}(t_i,k_i, t_{i-1}, k_{i-1}\dots, t_1,k_1)&\geq 
\frac{\bas{\mustar}{-} + \gamma_y^{k}(T_i^-)}
{\sum_{\ell=1}^d \left(\bas{\mustar}{+} +  \gamma_y^{k_i}(T_i^-) \right)} \\
&\quad \times 
\left(\bas{\mustar}{-} + \gamma_y^{k_i}(T_{i-1} + t_i) \right)  \times 
\exp\left(- \int_{T_{i-1}}^{T_{i-1} + t_i} 
\bas{\mustar}{+}+  \gamma_y^{k_i}(s) \, ds \right).
\end{align*}

We are thus once again in the setting of a multidimensional Hawkes process with constant baseline. The same arguments as in Lemma A.3 of \cite{clinet2017statistical} apply, showing that:

\begin{eqnarray*}
    Q_{2,T}(y,B)&:=& \int f^{ (\Delta T_{d^2+1} , K_{d^2+1}, \dots, \Delta T_{1}, K_{1})}(t_{d^2+1}, k_{d^2+1}, \dots, t_{1}, k_{1}) \, \\
&&\quad \quad \indic_{B}(\gamma_y(T)) \indic_{t_1+\ldots +t_{d^2}< T } \indic_{t_1+\ldots +t_{d^2+1} > T }  \, dt_1 \dots dt_{d^2+1}
\end{eqnarray*}

is a kernel that is non-trivial and lower semi-continuous.

Combining this with Equation~\eqref{eq:lower_bound_kernel} and using Assumption~\ref{ass:bounded_baseline2}, we obtain:

\[
P^{Z}_{T}\qty((x,y), A\times B) \geq Q_T \qty((x,y), A\times B):= Q_{1,T }(x,A) Q_{2,T}(y,B)
\]

which proves that \(P \) is a \(T \)-chain.

It remains to show that the core \(Z\) admits a reachable point. We already know that the process \(X\)admits such a point. Now, the Hawkes dynamics can be framed by the one of two Hawkes processes with a constant baseline (due to the bounds on \((\bastar{\mu}{k})_k \) ), and we know from the theory of classical Hawkes processes that $0$ is reachable for those process.

Therefore, \(Z \) is \(\psi \)-irreducible. By Proposition 6.2.1 of \cite{meyn1993stability}, we conclude that every compact set is a petite set.

\appendixheaderon
\section{Other test}
\label{appendix:test}

We present here another available test which allow to compare the coefficient values of the model.

\begin{algorithm}[!htbp]
\caption{Test for equality between coefficients $\Hz$: $\theta^\star_{i}=\theta^\star_{j}$ vs $\Hu$: $\theta^\star_{i}\neq \theta^\star_{j}$}
\label{test:testdifftheta}
\begin{algorithmic}
\Require Sample $\s{S}:= \{ (t, X(t)) : t \in \mathcal{T} \},$ with $\s{T}$ containing at least all $(T_i)_{i \indexset{1}{N(T)}}$
\begin{enumerate}
    \item Compute the MLE $\hat{\theta}_T$;
    \item Compute $\hat{I}$ as in Equation~\eqref{eq:Ihat};
    \item Compute 
   \[\mathcal{Z}_{ij}=\frac{\sqrt{T}(\hat{\theta_i} - \hat{\theta_j})}{ \sqrt{(\hat{I}^{-1})_{ii} - 2(\hat{I_i}^{-1})_{ij} + (\hat{I}_i^{-1})_{jj}} }\]
   \item Reject $\Hz$ if $ \vert \mathcal{Z}_{ij} \vert > q_{1-\alpha/2}$ where $q_{1-\alpha/2}$ is the $1-\alpha/2$ quantile of the standard Gaussian distribution.
\end{enumerate}
\end{algorithmic}
\end{algorithm}

Moreover, both Algorithm \ref{Test:OneCoefficientAlgo} and \ref{test:testdifftheta} can be easily adapted to test one-sided alternatives, such as inequality constraints. For example, testing \(\mathcal{H}_0: \mu_1 \leq \mu_2\) versus \(\mathcal{H}_1: \mu_1 > \mu_2\) simply amounts to computing the same test statistic but comparing it against the one-sided Gaussian quantile.

\end{appendix}


\end{changemargin}

\end{document}